\newtheorem{Thm}{Theorem}
\newtheorem{prop}{Proposition}[section]
\newtheorem{remark}{Remark}[section]
\newtheorem{lemma}{Lemma}[section]
\newtheorem{definition}{Definition}[section]
\newtheorem{theo}[Thm]{Theorem}
 \numberwithin{equation}{section}
 \numberwithin{Thm}{section}
\newcommand{\be}{\begin{equation}}
\newcommand{\ee}{\end{equation}}
\newcommand\bes{\begin{eqnarray}}
\newcommand\ees{\end{eqnarray}}
\newcommand{\bess}{\begin{eqnarray*}}
\newcommand{\eess}{\end{eqnarray*}}
\title[Principal eigenvalue for a linear time-periodic parabolic operator]
{Asymptotics of the principal eigenvalue  for a linear time-periodic parabolic operator I: Large advection}
\author{Shuang Liu,\ \ Yuan Lou,\ \ Rui Peng\ \ and\ \, Maolin Zhou}
\thanks{{S. Liu}: Institute for Mathematical Sciences, Renmin University of China, Beijing 100872, P.R. China. Email: liushuangnqkg@ruc.edu.cn}
\thanks{{Y. Lou}: School of  Mathematical Sciences, Shanghai Jiao Tong University, Shanghai 200240, P.R. China and
Department of Mathematics, Ohio State University, Columbus, OH 43210, USA. Email:  lou@math.ohio-state.edu}
\thanks{{R. Peng}: School of Mathematics and Statistics, Jiangsu Normal University, Xuzhou, 221116, Jiangsu Province, P.R. China. Email:
pengrui\,$\b{}$\,seu@163.com}
\thanks{{M. Zhou}: Chern Institute of Mathematics, Nankai University, Tianjin 300071, P.R. China. Email:
zhouml123@nankai.edu.cn}
\subjclass[2010]{Primary 35P15, 35P20; Secondary 35K87, 35B10.}
 \keywords{Time-periodic parabolic operator; principal eigenvalue; advection;  asymptotics.}
\begin{document}
\maketitle

\begin{abstract}
We investigate the  effect of large advection on the principal eigenvalues
of linear time-periodic parabolic operators with  zero Neumann boundary conditions. Various asymptotic behaviors of the principal eigenvalues, when advection coefficient approaches infinity,
are established,
where spatial or temporal degeneracy could occur in
the advection term.
Our findings substantially improve the results in \cite{PZ2015} for parabolic operators and also extend the existing results in \cite{CL2008,PZ2017} for elliptic operators.
\end{abstract}
\section{Introduction}\label{Introduction}\label{S1}
In this paper, we consider the following linear eigenvalue problem for
time-periodic parabolic operators  in one-dimensional space:
\begin{equation}\label{advection1}
 \left\{\begin{array}{ll}
 \medskip
\partial_{t}\varphi-D\partial_{xx}\varphi- \alpha \partial_x m
\partial_x\varphi+V\varphi=\lambda\varphi\ \ &{\text{in}}\,\,(0,1)\times[0,T],\\
 \medskip
 \partial_x\varphi(0,t)=\partial_x\varphi(1,t)=0 \ \ &{\text{on}}\,\,[0,T], \\
 \varphi(x,0)=\varphi(x,T) &\text{on}\,\,(0,1),
 \end{array}
 \right.
 \end{equation}
where the positive parameters
$D$ and $\alpha$ are
diffusion and advection rates, respectively. The functions
$m\in C^{2,0}([0,1]\times\mathbb{R})$ and $V\in C([0,1]\times\mathbb{R})$
are assumed to be periodic in $t$ with  a common period $T$.
It is well known \cite[Proposition 7.2]{Hess} that
problem \eqref{advection1} admits a principal eigenvalue $\lambda(\alpha)$, which is real and
simple, and the corresponding eigenfunction can be chosen to be positive. 
Furthermore, $\lambda(\alpha)<\mathrm{Re}(\lambda)$ holds  for any other eigenvalue $\lambda$ of \eqref{advection1}.


The goal of this paper is
to determine  the limit of $\lambda(\alpha)$ as $\alpha\to\infty$.
The asymptotic behavior of
the principal eigenvalue for \eqref{advection1}
as $D\to 0$ will be considered in 
\cite{LLPZ20202}.

\subsection{Background}
Besides its own mathematical interest,
the asymptotic behavior of the principal eigenvalue of
\eqref{advection1} for
large advection rate $\alpha$
is also motivated by its applications
to 
biological problems, including:
{\rm(i)} the phytoplankton growth with periodic 
light intensity; {\rm(ii)} the persistence and  competition of species in 
rivers with  spatio-temporally varying drift; 
{\rm(iii)}  the  competition of 
species  along  the gradient
of spatial-temporally varying resources; {\rm (iv)} the spreading of epidemic diseases in
time-periodic advective environments,
 among others. 
We refer to \cite{CC2003, C2014, LLL2020, LL2019_1} and references therein for further 
discussions. In the following
we highlight two
 potential applications of our results:

{\underline {Persistence for a single species}}.
The population dynamics of a single species, subject to  spatio-temporally  varying environmental
drifts, can be modeled as
 \begin{equation}\label{large_1_1}
\begin{cases}
 \begin{array}{ll}
 \smallskip
 \partial_t U=\partial_x\left(D\partial_x U-\alpha U \partial_x \tilde{m} \right)+Uf(x,t,U)
 \ \ & {\text{in}}\,\,(0,1)\times(0, \infty),\\
 D\partial_x U-\alpha U\partial_x \tilde{m} =0 & {\text{on}}\,\,\{0,1\}\times (0, \infty),
  \end{array}
  \end{cases}
 \end{equation}
where $\tilde{m}$ and $f$ are assumed to be $T$-periodic and $U$ denotes the density of the population; 
see \cite{CC2003,Hess,Ni2011} for more details. For instance,
model \eqref{large_1_1} can 
describe
 the phytoplankton  growth in light limited water columns  \cite{DH2008_1,DH2008_2,HOW1999, MO2017,PZ2015,PZ2016,S1981} or hydrobiological species  in rivers \cite{SG2001}.  An important biological  issue is
how  drift affects the population dynamics of \eqref{large_1_1} \cite{PLNL2005}.  Mathematically, the persistence of the single species in \eqref{large_1_1} is equivalent to the instability of trivial equilibrium $U\equiv0$ \cite{CC2003}, which is in turn determined by the sign of the
principal eigenvalue $\lambda(\alpha)$ of the linear problem \eqref{advection1} with  $V=-f(x,T-t,0)$
and $m=\tilde{m}(x, T-t)$,
by considering the corresponding adjoint operator.
If the environment  is spatio-temporally  varying, i.e. $\partial_x \tilde{m} $ and $f$
depend on $x$ and $t$ non-trivially
(e.g. phytoplankton population drifts up and down in the water column periodically in time,
and the light intensity at the water surface also varies periodically in time), then determining the sign of $\lambda(\alpha)$ becomes a challenging  issue, and standard spectral theory
is often not sufficient.

When  $m$ and $f$ are independent of time,
the high-dimensional version of \eqref{large_1_1} was initially proposed
in 
\cite{BC1995}, where
it is assumed that
the species may have the tendency to move upward along the resource gradient.
They investigated whether such directed movement could help promote the persistence of a single species. 
A related question is to determine the optimal distribution of resources for a single species to survive:  When $\alpha=0$,
it was shown in \cite{NY2018}  that, under a regularity assumption, the optimal distribution $m^*$ that maximizes the total biomass
of a single species is of the
``bang-bang" type, i.e. $m^*=\chi_{E}$ for some measurable set $E$,
where $\chi_{E}$ denotes the characteristic function of $E$; when  $\alpha\neq 0$, by studying the minimization of the principal eigenvalue of problem \eqref{advection1} with proper constraint on $m$,   the optimal distribution 
was fully determined in \cite{CDP2017}  for the one-dimensional case; see also \cite{MNP2019} for the recent progress on the higher dimensional case. We also refer to \cite{HNR2011} for  optimization problems on the principal eigenvalues for elliptic operators
with drift. It will be of interest to investigate the principal eigenvalue of problem \eqref{advection1}
under weaker regularity of $m$ and $V$, e.g.
when $m$ and/or $V$ are of the form $\chi_{E}$. Such questions are even not well understood for time-independent  $m$ and $V$.

{\underline{Competition for two species}}. The two competing species in time-periodic  and
advective environment can be modeled by the following
reaction-diffusion-advection system:
\begin{equation}\label{liu0}
\left\{
\begin{array}{ll}
\medskip
\partial_t U_1=\partial_x \left(D \partial_x U_1-\alpha_1 U_1 \partial_x m\right)+U_1 f(x, t, U_1, U_2) & \text{in } (0,1)\times (0,\infty),\\
\medskip
\partial_t U_2=\partial_x \left(D \partial_x U_2-\alpha_2 U_2 \partial_x m\right)+U_2 g(x, t, U_1, U_2) & \text{in } (0,1)\times (0,\infty),\\
D \partial_x U_1-\alpha_1 U_1 \partial_x m=D \partial_x U_2-\alpha_2 U_2 \partial_x m=0 & \text{on } \{0,1\}\times (0,\infty),
\end{array}
\right.
\end{equation}
 where $m$, $f$ and $g$ are $T$-periodic functions,
 and $U_1, U_2$ are the population densities of two species.
When the coefficients are time independent,  model \eqref{liu0} was first proposed in \cite{CCL2006} to study whether or not the advection along the resource gradient can confer competition advantage; see also \cite{ALL2017, CCL2007}.
When $\partial_x m$ is a positive constant,  \eqref{liu0} reduces to the river model studied in  \cite{LL2014_1, LL2014_2, LL2014_a, LP2015, LPZ2019,VL2011,ZZ2016}, where
the evolution of biased movement was considered. Since \eqref{liu0} generates a monotone dynamical system \cite{Hess},  the dynamics of \eqref{liu0}  is determined to a large extent by its steady periodic states and their stability properties. For example,
 the existence and stability of coexistence states for \eqref{liu0}
follow from the instability of the two semi-trivial states and
theory for  monotone systems \cite{Hess,H1988}. The stability of semi-trivial periodic state turns out to be determined by the sign of the principal eigenvalue of the linear problem \eqref{advection1} 
by considering the linearization
at this semi-trivial state.
Therefore,
the qualitative properties of principal eigenvalue with respect to $\alpha$  plays a pivotal role in analyzing the impact of the advection on the outcome of the competition
in time-periodic environments.

\subsection{Previous work}
Let $\lambda(\alpha)$ denote the principal eigenvalue of \eqref{advection1}. Observe that when $V$ depends on the time variable alone, i.e. $V(x,t)=V(t)$, there holds
$\lambda(\alpha)\equiv\frac{1}{T}\int_0^T V\left(s\right)\mathrm{d}s$ 
for all $\alpha$.
When $V$ and $\partial_xm$ depend on the space variable alone, i.e.
$V(x,t)=V(x)$ and $\partial_xm(x,t)=m'(x)$,  \eqref{advection1} reduces to
the following elliptic eigenvalue problem:
\begin{equation}\label{elliptic}
 \left\{\begin{array}{ll}
 \medskip
-D\varphi''- \alpha m'(x)\varphi'+V(x)\varphi=\lambda\varphi\ \ &\text{in}\,\,(0,1),\\
\varphi'(0)=\varphi'(1)=0.
 \end{array}
 \right.
 \end{equation}
For this simple-looking ODE eigenvalue problem, identifying the sign of $\lambda(\alpha)$
is not yet trivial. In the recent works \cite{CL2008,CL2012,LL2019,PZZ2018,PZ2017},
 the asymptotic behaviors of the principal eigenvalue for large $\alpha$ or small $D$  has been extensively studied for \eqref{elliptic} and its high dimensional version.

However, when $V$ or/and $\partial_xm$ depend both on the spatio-temporal variables,
much less has been known about the asymptotic behaviors of $\lambda(\alpha)$,
partly due to
the lack of variational structure for problem \eqref{advection1}. One may refer to
\cite{DT2016,DP2012,Hess,Hutson2000,LLPZ2019,Nadin2009,Nadin2011,PZ2015}, among others, for
some progresses in this direction. In particular, in the case of $V(x,t)=\mu v(x,t)$ for some $T$-periodic function $v\geq0,\not\equiv0$, the authors in
\cite{DT2016,DP2012} studied the limiting behaviors of the principal eigenvalue as $\mu\to\infty$ when the weight function $v$ may
be spatio-temporally degenerate (i.e. vanishes). In \cite{LLPZ2019}, we  established some monotonicity and
asymptotic behaviors of $\lambda(\alpha)$  with respect  to time period $T$.
If the advection is monotone in space $x$, i.e. $|\partial_xm|>0$, the limiting behaviors of
the principal eigenvalue for large $\alpha$ or small $D$ were investigated in \cite{PZ2015}.
However, the asymptotics of $\lambda(\alpha)$ remain open for general advection.


We also refer to \cite{CC2018, CCL2019, Hutson2001, LL2020}  for some applications of the principal eigenvalues to the evolution of dispersal in time-periodic environments.

\subsection{Main results}
Throughout this paper, we set  $D=1$ in \eqref{advection1}.
To state the main results, we introduce some notations for advection $m$.
 \begin{itemize}
   \item A {\it spatially interior critical point} of  $m$ is a point $(x_*,t_*)\in(0,1)\times[0,T]$ such that $\partial_x m(x_*,t_*)=0$, and
   it is called {\it nondegenerate} if $\partial_{xx} m(x_*,t_*)\neq0$.
   \item The boundary point $(x_*,t_*)\in\{0,1\}\times[0,T]$ is always called  spatially critical, and
   it is called {\it nondegenerate}
if either $\partial_x m(x_*,t_*)\neq0$ or $\partial_{xx} m(x_*,t_*)\neq0$.
\item A {\it point of spatially  local maximum} of $m$ is a point
$(x_*,t_*)\in[0,1]\times[0,T]$ that satisfies $m(x_*,t_*)\geq m(x,t_*)$
in a small neighborhood of $x_*$ with respect to $[0,1]$.
 \end{itemize}

Our first main result concerns the nondegenerate advection. 

\begin{theo}\label{ldnthm1}
Suppose that all spatially critical points of $m$ are nondegenerate. 
Assume $\{(\kappa_i(t),t):t\in[0,T],1\leq i\leq N\}$ with $\kappa_i\in C^1([0,T])$ is the set  of
spatially local maximum points of $m$. 
Let $\lambda(\alpha)$ be the principal eigenvalue of \eqref{advection1}. Then
\begin{equation}\label{eq:limit1}
\lim_{\alpha\rightarrow\infty}\lambda(\alpha)=\min_{1\leq i\leq N}\left\{\frac{1}{T}\int_0^T V\left(\kappa_i(s),s\right)\mathrm{d}s\right\}.
\end{equation}
\end{theo}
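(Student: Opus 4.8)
The plan is to establish the limit in \eqref{eq:limit1} by proving matching upper and lower bounds for $\limsup_{\alpha\to\infty}\lambda(\alpha)$ and $\liminf_{\alpha\to\infty}\lambda(\alpha)$. The guiding heuristic is that when $\alpha$ is large, the principal eigenfunction $\varphi_\alpha$ concentrates near the curves of spatially local maxima $x=\kappa_i(t)$, where the drift term $-\alpha\partial_x m\,\partial_x\varphi$ degenerates; away from these curves the transport is so strong that it forces $\varphi_\alpha$ to be exponentially small. Near a maximum curve the operator should, after rescaling, look like a harmonic-oscillator-type operator whose principal eigenvalue contributes only a lower-order correction, so that the leading term is the time-average $\frac{1}{T}\int_0^T V(\kappa_i(s),s)\,ds$, and the global principal eigenvalue is governed by the smallest such average.

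For the upper bound, I would construct an explicit test function: pick the index $i_0$ achieving the minimum in \eqref{eq:limit1}, and build $\varphi$ supported in a thin tube around $x=\kappa_{i_0}(t)$, of the form $\varphi(x,t)=\chi\big((x-\kappa_{i_0}(t))\sqrt\alpha\big)\psi(t)$ for a suitable cutoff/Gaussian profile $\chi$ and a positive $T$-periodic function $\psi$ solving the averaged ODE $\psi'+V(\kappa_{i_0}(t),t)\psi = \big(\frac1T\int_0^T V(\kappa_{i_0},s)ds\big)\psi$. Plugging this into the Rayleigh-type quotient for the time-periodic operator (or rather, using the characterization of $\lambda(\alpha)$ via the existence of a positive super/subsolution, cf.\ \cite[Proposition 7.2]{Hess}), the nondegeneracy $\partial_{xx}m(\kappa_{i_0}(t),t)<0$ makes the advection term favorable, and the diffusion cost is $O(\alpha)$ times the Gaussian width squared, i.e.\ $O(1)$; careful bookkeeping yields $\lambda(\alpha)\le \frac1T\int_0^T V(\kappa_{i_0},s)ds + o(1)$. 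One must handle the moving boundary of the tube and the case $\kappa_{i_0}\in\{0,1\}$ separately, using the Neumann condition.

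For the lower bound, I would argue by contradiction: suppose along a sequence $\alpha_n\to\infty$ one has $\lambda(\alpha_n)\le \min_i \frac1T\int_0^T V(\kappa_i,s)ds - \delta$ for some $\delta>0$. Normalize the positive eigenfunctions $\varphi_{\alpha_n}$ (say $\max\varphi_{\alpha_n}=1$) and track where the maximum is attained. The key is a \emph{localization lemma}: using the equation, the maximum of $\varphi_{\alpha_n}$ over the period must occur asymptotically near some local-maximum curve $x=\kappa_j(t)$ — at an interior spatially critical point that is a local minimum or at a point where $\partial_x m\ne 0$, the strong transport combined with the sign of $\partial_x m\,\partial_x\varphi$ at an interior max of $\varphi$ contradicts the eigenvalue equation unless $\lambda(\alpha_n)$ is large. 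Once concentration near $\kappa_j$ is established, integrate the equation against an appropriate weight (or evaluate along $x=\kappa_j(t)$ and use a barrier in the transverse direction built from the nondegenerate quadratic behavior of $m$) to show $\lambda(\alpha_n)\ge \frac1T\int_0^T V(\kappa_j,s)ds - o(1)$, contradicting the assumption.

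The main obstacle, I expect, will be the lower bound — specifically the localization step — because of the lack of variational structure for \eqref{advection1}: one cannot simply minimize a Rayleigh quotient and must instead extract concentration purely from the PDE via maximum-principle / barrier arguments that are uniform in $\alpha$, while simultaneously controlling the interaction between the spatial transport and the time derivative $\partial_t\varphi$ over a full period. The nondegeneracy hypothesis on all spatially critical points is what rescues this: it gives quadratic lower bounds on $|m(\kappa_j(t),t)-m(x,t)|$ near maxima and definite signs of $\partial_x m$ near minima and on the relevant pieces of the boundary, which is exactly what is needed to build the transverse barriers and to rule out concentration anywhere except at the curves $\kappa_i$. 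Regularity of the curves $\kappa_i\in C^1([0,T])$ (which should itself follow from the implicit function theorem applied to $\partial_x m=0$ at nondegenerate interior critical points, with the boundary curves being constant) will be used to make the change of variables $y=(x-\kappa_i(t))\sqrt\alpha$ legitimate and to absorb the resulting $\dot\kappa_i\,\partial_y$ term into lower-order contributions.
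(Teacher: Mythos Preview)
Your upper bound is close in spirit to the paper's: both build a nonnegative sub-solution supported near a single local-maximum curve $\kappa_{i_0}$ and invoke the super/sub-solution characterization of the principal eigenvalue. The paper, however, uses a tube of \emph{fixed} width $\delta$ (independent of $\alpha$) rather than your $\sqrt{\alpha}$-rescaled Gaussian; the point is that on the flanks of a fixed tube the advection term $-\alpha\,\partial_x m\,\partial_x\underline\varphi$ already dominates for large $\alpha$, so no shrinking is needed and the diffusion cost stays bounded trivially.

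The lower bound is where your plan and the paper genuinely diverge. The paper does \emph{not} analyze the eigenfunction at all; instead it constructs a single global positive super-solution $\overline\varphi$ on $[0,1]\times[0,T]$, built piecewise: a quadratic $M_i+(x-\kappa_i(t))^2$ times the periodic ODE solution $f_i(t)$ near each local-maximum curve, an inverted parabola near each local-minimum curve, and monotone interpolants on the regions where $|\partial_x m|\ge\rho>0$. The pieces are glued along finitely many curves where $\partial_x\overline\varphi$ is allowed to jump (downward), and the paper develops a generalized super/sub-solution framework (Definition~\ref{appendixldef}, Proposition~\ref{appendixprop}) precisely to legitimize such piecewise barriers. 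This yields $\liminf_{\alpha\to\infty}\lambda(\alpha)\ge\min_i\frac1T\int_0^T V(\kappa_i,s)\,ds$ directly, with no compactness or concentration step.

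Your localization argument, as stated, has a gap: at any interior spatial maximum of $\varphi_{\alpha_n}(\cdot,t)$ one has $\partial_x\varphi_{\alpha_n}=0$, so the advection term $-\alpha\,\partial_x m\,\partial_x\varphi_{\alpha_n}$ vanishes there regardless of the size of $\alpha$ or the sign of $\partial_x m$. Thus ``strong transport'' gives no information at the max point itself, and the sketched mechanism for ruling out concentration away from the $\kappa_i$ does not go through. A concentration approach can be made to work in related elliptic problems, but it requires more than tracking the maximum --- typically integrated estimates against carefully chosen weights --- and is substantially harder in the time-periodic setting without variational structure. The paper's global-barrier route sidesteps this entirely, and its piecewise super-solution with admissible derivative jumps is the key idea you are missing.
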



If  the function $m$
admits finitely many isolated maxima for every $t$, 
we conjecture that Theorem \ref{ldnthm1} remains true for  the  higher dimensional case.
Determining the asymptotic  profile of the principal eigenfunction is also
an interesting question. We   suspect that the corresponding principal eigenfunction will concentrate on some of these curves $x=\kappa_i(t)$ as $\alpha\to\infty$.

\begin{remark}\label{remark-th1} 
{\rm When $V$ and $m$ are independent of time, 
Theorem \ref{ldnthm1}  is reduced to \cite[Theorem 1.1]{CL2008} for elliptic problem \eqref{elliptic} in one-dimensional case.
If $\partial_xm>0$ (resp. $\partial_xm<0$) in $[0,1]\times[0,T]$,
it follows from Theorem {\rm\ref{ldnthm1}} that
$$\lim\limits_{\alpha\rightarrow\infty}\lambda(\alpha)=\frac{1}{T}\int_0^T V\left(1,s\right)\mathrm{d}s\ \qquad
 (\text{resp.}\ \lim\limits_{\alpha\rightarrow\infty}\lambda(\alpha)=\frac{1}{T}\int_0^T V\left(0,s\right)\mathrm{d}s),
$$
which was first established in  \cite[Theorem {\rm 1.1}]{PZ2015}.
}
\end{remark}

Our next result concerns the situation when the advection $m$ can be spatially degenerate,  e.g.
$m(x,t)$ is constant in  an interval  for each $t\in [0,T]$.
We first introduce some notations.

Let $0\leq\underline{\kappa}(t)<\overline{\kappa}(t)\leq1$ be two continuous functions defined on $[0,T]$ and $p,q\in\{\mathcal{N},\mathcal{D}\}$. Denote by $\lambda^{pq}\big((\underline{\kappa},\overline{\kappa})\big)$ the principal eigenvalue of the  problem
\begin{equation}\label{definition}
\begin{cases}
 \begin{array}{ll}
\partial_t\psi-\partial_{xx}\psi+V(x,t)\psi=\lambda\psi,\ \
&x\in(\underline{\kappa}(t),\,\overline{\kappa}(t)),\ t\in[0,T],\\
\left(\ell_1\psi+(1-\ell_1)\partial_x\psi\right)(\underline{\kappa}(t),t)=0,& t\in \left[0,T\right],\\
\left(\ell_2\psi+(1-\ell_2)\partial_x\psi\right)(\overline{\kappa}(t),t)=0,& t\in \left[0,T\right],\\
\psi(x,0)=\psi(x,T),&x\in(\underline{\kappa}(0),\overline{\kappa}(0)),
 \end{array}
\end{cases}
\end{equation}
where
  \begin{equation*}
    \ell_1=
  \left\{\begin{array}{ll}
  0,&\text{if}~ ~p=\mathcal{N},\\
  1,&\text{if}~~ p=\mathcal{D},
  \end{array}
   \right.
   \text{ and }\,\,
    \ell_2=
  \left\{\begin{array}{ll}
  0,&\text{if}~ ~q=\mathcal{N},\\
  1,&\text{if}~~ q=\mathcal{D}.
  \end{array}
  \right.
  \end{equation*}
The letters $\mathcal{N}$ and $\mathcal{D}$ represent the zero Neumann
and Dirichlet boundary conditions, respectively. The existence of $\lambda^{pq}\big((\underline{\kappa},\overline{\kappa})\big)$ can be guaranteed by the Krein-Rutman theorem \cite{KR1950}; see also \cite[Proposition 7.2]{Hess}.

Given  $\kappa_i\in C^1([0,T])$, $i=0,1,\ldots,N+1$, such that 
 \begin{equation}\label{def-a}
0=\kappa_0(t)<\kappa_1(t)<\ldots<\kappa_{N}(t)<\kappa_{N+1}(t)=1
\quad \text{for all }\,\, t\in[0,T],
 \end{equation}
we  denote
\begin{itemize}
  \item [] $\mathbf{A}=\big\{ 0\leq i\leq N:\ \ \partial_xm(x,t)<0\ \ \text{if }\,\,
      x\in(\kappa_{i}(t),\kappa_{i+1}(t))\big\};$
  \item [] $\mathbf{B}=\big\{ 0\leq i\leq N:\ \ \partial_x m(x,t)=0 \ \, \text{if }\,\,
      x\in[\kappa_{i}(t),\kappa_{i+1}(t)]\big\};$
  \item [] $\mathbf{C}=\big\{ 0 \leq i\leq N:\ \ \partial_x m(x,t)>0 \ \, \text{if }\,\,
      x\in(\kappa_{i}(t),\kappa_{i+1}(t))\big\}$.
\end{itemize}


Our second result can be stated as follows.
\begin{theo}\label{ldnthm2}
 Let  $\{\kappa_i(t)\}_{0\leq i\leq N+1}$ be functions of class $C^1$
 satisfying \eqref{def-a} and 
$\partial_xm(\kappa_i(t),t)=0$   for all $t\in[0,T]$ and $1\leq i\leq N$.
 Assume that $\{0,1,\ldots, N\}=\mathbf{A}\cup\mathbf{B}\cup\mathbf{C}$
 and that for any $0\leq i\leq N-1$, $(i, i+1)\not\in \mathbf{A}^2\cup\mathbf{B}^2\cup\mathbf{C}^2$.

Define $\mathbf{E}(\mathcal{N},\mathcal{N}), \mathbf{E}(\mathcal{N},\mathcal{D}), \mathbf{E}(\mathcal{D},\mathcal{N}), \mathbf{E}(\mathcal{D},\mathcal{D})\subset\{0,1,\ldots, N\}$ by setting
$$i\in \mathbf{E}(\mathcal{N},\mathcal{N})\Leftrightarrow \begin{cases} i-1\in \mathbf{C}, \,\, i\in \mathbf{B},\,\,i+1\in\mathbf{A} &\text{if }  1\leq i\leq N,\\
0\in \mathbf{B},\,\,1\in\mathbf{A} & \text{if }  i=0,\\
N-1\in \mathbf{C},\,\,N\in\mathbf{B} & \text{if }  i=N;\\
\end{cases}
$$
$$i\in \mathbf{E}(\mathcal{N},\mathcal{D})\Leftrightarrow \begin{cases} i-1\in \mathbf{C}, \,\, i\in \mathbf{B},\,\,i+1\in\mathbf{C} &\text{if }  1\leq i\leq N,\\
0\in \mathbf{B},\,\,1\in\mathbf{C} & \text{if }  i=0;\\
\end{cases}
$$
$$i\in \mathbf{E}(\mathcal{D},\mathcal{N})\Leftrightarrow \begin{cases} i-1\in \mathbf{A}, \,\, i\in \mathbf{B},\,\,i+1\in\mathbf{A} &\text{if }  1\leq i\leq N,\\
N-1\in \mathbf{A},\,\,N\in\mathbf{B} & \text{if }  i=N;\\
\end{cases}
$$
$$i\in \mathbf{E}(\mathcal{D},\mathcal{D})\Leftrightarrow  i-1\in \mathbf{A}, \,\, i\in \mathbf{B},\,\,i+1\in\mathbf{C}, \text{ and }  1\leq i\leq N. \quad
$$
Moreover, define the set $\mathbf{E}\subset\{0,1,\ldots, N+1\}$ by setting
$$i\in \mathbf{E}\Leftrightarrow \begin{cases} i-1\in \mathbf{C}, \,\, i\in \mathbf{A}, &\text{if }  1\leq i\leq N,\\
0\in \mathbf{A}, & \text{if }  i=0,\\
N\in\mathbf{C} & \text{if }  i=N+1.
\end{cases}
$$

Let $\lambda(\alpha)$ be the principal eigenvalue of \eqref{advection1}. Then
\begin{equation}\label{eq:limit2}
\begin{split}
\lim_{\alpha\rightarrow\infty}\lambda(\alpha)
=\min\Bigg\{&\min_{ i\in \mathbf{E}}\left[\frac{1}{T}\int_0^T V(\kappa_{i}(s),s)\mathrm{d}s\right],\ \
\min_{i\in \mathbf{E}(\mathcal{N},\mathcal{N})}\lambda^{\mathcal{N}\mathcal{N}}_i,\\
&\min_{i\in \mathbf{E}(\mathcal{N},\mathcal{D})}\lambda^{\mathcal{N}\mathcal{D}}_i,\ \ \min_{i\in \mathbf{E}(\mathcal{D},\mathcal{N})}\lambda^{\mathcal{D}\mathcal{N}}_i,\ \ \min_{i\in \mathbf{E}(\mathcal{D},\mathcal{D})}\lambda^{\mathcal{D}\mathcal{D}}_i
\Bigg\},
\end{split}
\end{equation}
where $\lambda^{pq}_i=\lambda^{pq}\big((\kappa_{i}, \kappa_{i+1})\big)$ for  $p,q\in\{\mathcal{N},\mathcal{D}\}$.
\end{theo}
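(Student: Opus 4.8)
The overall strategy is to bypass the missing variational structure by working with the sub- and supersolution characterization of the principal eigenvalue. Writing $\mathcal L_\alpha u:=\partial_t u-\partial_{xx}u-\alpha\partial_xm\,\partial_x u+Vu$ and letting $\Lambda^{*}$ denote the right-hand side of \eqref{eq:limit2}, I would use the following standard facts (see \cite[Chapter~II and Proposition~7.2]{Hess}): if $\Phi>0$ is $T$-periodic, satisfies $\partial_x\Phi=0$ on $\{0,1\}\times[0,T]$ and $\mathcal L_\alpha\Phi\ge\mu\Phi$, then $\lambda(\alpha)\ge\mu$; conversely $\lambda(\alpha)\le\lambda_1^{D}(U,\alpha)$ for the Dirichlet principal eigenvalue on any time-dependent subtube $U\subset(0,1)\times[0,T]$ (extend the Dirichlet eigenfunction by zero to get a global subsolution), and replacing a Neumann condition by a Dirichlet one, or shrinking the domain, never decreases the relevant principal eigenvalue. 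It therefore suffices to prove separately that (i) $\limsup_{\alpha\to\infty}\lambda(\alpha)\le\mu$ for each of the finitely many entries $\mu$ of the minimum defining $\Lambda^{*}$, and (ii) for every $\varepsilon>0$ and all large $\alpha$ there exists a global supersolution of $\mathcal L_\alpha-(\Lambda^{*}-\varepsilon)$.

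The construction is organized block by block along the partition $\{0,\dots,N\}=\mathbf A\cup\mathbf B\cup\mathbf C$. On a monotone block $(\kappa_i(t),\kappa_{i+1}(t))$, $i\in\mathbf A\cup\mathbf C$, one uses exponential ans\"atze $\Phi=\phi(t)\,e^{\theta\alpha m(x,t)}$; since $\partial_xm$ is bounded away from zero on compact subsets of the open interval, the term $-\theta(\theta+1)\alpha^{2}(\partial_xm)^{2}$ dominates $\mathcal L_\alpha\Phi/\Phi$ there, and choosing $\theta\in(-1,0)$ (respectively $\theta>0$) makes it an arbitrarily large positive (respectively negative) multiple of $\Phi$, producing local super- (respectively sub-) solutions whose parameters blow up away from the endpoint at which $m$ is larger; this is precisely the mechanism underlying \cite{PZ2015} and Theorem~\ref{ldnthm1}. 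On a flat block $(\kappa_i(t),\kappa_{i+1}(t))$, $i\in\mathbf B$, the advection term vanishes identically, so the positive principal eigenfunction $\psi^{pq}_i$ of the restricted problem \eqref{definition} is itself a solution with parameter exactly $\lambda^{pq}_i$. The labels $p,q$ are forced by the direction in which the weight $e^{\alpha m(\cdot,t)}$, with respect to which the spatial part of $\mathcal L_\alpha$ is self-adjoint, decays off the plateau: if the neighboring block has $m$ increasing \emph{toward} the plateau (so the plateau is locally $m$-maximal on that side) the relevant condition is a free, hence Neumann, one, whereas if $m$ increases \emph{away from} the plateau the restricted eigenfunction is driven to zero there, a Dirichlet condition. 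Matching the four sign patterns of the two neighbors against the alternation hypothesis $(i,i+1)\notin\mathbf A^{2}\cup\mathbf B^{2}\cup\mathbf C^{2}$ reproduces exactly the index sets $\mathbf E(\mathcal N,\mathcal N),\mathbf E(\mathcal N,\mathcal D),\mathbf E(\mathcal D,\mathcal N),\mathbf E(\mathcal D,\mathcal D)$, while a curve $\kappa_i$ with $i-1\in\mathbf C$ and $i\in\mathbf A$ is a genuine spatially local maximum curve: near such a curve the localized Dirichlet eigenvalue on a shrinking tube tends to $\frac1T\int_0^TV(\kappa_i,s)\,ds$ by (a variant of) the argument behind Theorem~\ref{ldnthm1}, insensitive to the possible degeneracy of the maximum; these are the indices in $\mathbf E$, together with the boundary cases at $x=0$ and $x=1$.

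For the upper bound (i), fix an entry $\mu$ of $\Lambda^{*}$. If $\mu=\frac1T\int_0^TV(\kappa_i,s)\,ds$ with $i\in\mathbf E$, bound $\lambda(\alpha)\le\lambda_1^{D}(U_{i,\delta},\alpha)$ on a thin tube around $x=\kappa_i(t)$ and send $\delta\to0$ after $\alpha\to\infty$; if $\mu=\lambda^{pq}_i$ with $i\in\mathbf B$, extend $\psi^{pq}_i$ to a nonnegative nontrivial global subsolution of $\mathcal L_\alpha-(\mu+o(1))$ by attaching fast-decaying exponential tails across the Neumann ends and the zero function across the Dirichlet ends; either way $\limsup_\alpha\lambda(\alpha)\le\mu$, and minimizing over the finitely many $\mu$ gives $\limsup_\alpha\lambda(\alpha)\le\Lambda^{*}$. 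For the lower bound (ii), assemble a pointwise minimum $\Phi=\min_k c_k\Phi_k$ of local supersolutions: the maximum-curve pieces (parameter $\frac1T\int_0^TV(\kappa_i,s)\,ds+o(1)\ge\Lambda^{*}-o(1)$), the flat-block eigenfunctions $\psi^{pq}_i$ (parameter $\lambda^{pq}_i\ge\Lambda^{*}$), and the monotone-block exponentials with $\theta\in(-1,0)$ (parameter $\to+\infty$ off the thin endpoint layers), with the constants $c_k>0$ and the rates $\theta$ chosen so that on each endpoint layer the adjacent maximum-curve or flat-block piece realizes the minimum and the glued function is continuous and positive. Since the pointwise minimum of finitely many supersolutions of a linear parabolic operator is again a supersolution in the distributional sense --- at the gluing curves the second-order term contributes a favorable positive singular measure --- a standard mollification yields a genuine $C^{2,1}$, $T$-periodic, positive supersolution of $\mathcal L_\alpha-(\Lambda^{*}-\varepsilon)$ for large $\alpha$, so $\liminf_\alpha\lambda(\alpha)\ge\Lambda^{*}-\varepsilon$; letting $\varepsilon\to0$ finishes the proof.

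The hard part is making the local constructions compatible across the transition curves $x=\kappa_i(t)$ that separate a monotone block from a flat block or a local maximum curve. There $\partial_xm$ vanishes, so the $\alpha^{2}(\partial_xm)^{2}$-term no longer controls the others, and after the change of variables $y=\sqrt{\alpha}\,(x-\kappa_i(t))$ following the moving curve one is confronted, in an $O(\alpha^{-1/2})$-wide layer, with an unbalanced $-\theta\alpha\,\partial_{xx}m$ term and $O(\sqrt\alpha)$ temporal terms; keeping the glued function continuous, positive, and a supersolution through this layer --- a difficulty that is most acute at Dirichlet ends, where the flat-block eigenfunction vanishes --- is the technical core, and it is here that one really uses the $C^{1}$ regularity of the $\kappa_i$, the alternation hypothesis, and a perturbation estimate showing that a defect confined to such a thin layer cannot move the principal eigenvalue by more than $o(1)$. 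A complementary route to the lower bound, which localizes the same analysis, is a blow-up/concentration-compactness argument: normalize the eigenfunctions, extract a limit, identify the concentration point as either a local maximum curve or a flat block, and read off from the corresponding limiting problem that $\liminf_\alpha\lambda(\alpha)\ge\Lambda^{*}$ --- but this runs into the same need to pin down the correct (Neumann versus Dirichlet) boundary behavior in the limit.
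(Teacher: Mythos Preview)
Your global strategy---sub/supersolution characterization of $\lambda(\alpha)$, block-by-block construction along the partition $\mathbf A\cup\mathbf B\cup\mathbf C$, with the flat-block eigenfunctions $\psi^{pq}_i$ on $i\in\mathbf B$ and advection-dominated pieces on $i\in\mathbf A\cup\mathbf C$---is exactly the paper's. Your identification of the $p,q$ labels and of the set $\mathbf E$ is also correct. But two of your technical devices differ from the paper's, and in one place this leaves a genuine gap.

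The gap is precisely at what you call ``the hard part'': matching across a Dirichlet end of a flat block, where $\psi^{\mathcal D\cdot}_i$ vanishes. You gesture at an $O(\alpha^{-1/2})$ boundary-layer analysis or a concentration-compactness argument, but never resolve it. The paper avoids the layer entirely by a Robin perturbation: it replaces $\psi^{\mathcal D}_0$ by the principal eigenfunction $\psi^{\mathcal D}_\eta$ of the problem with boundary condition $\psi=\eta\,\partial_x\psi$ at the relevant edge, with $\eta\neq 0$ small. Then $\psi^{\mathcal D}_\eta(\kappa_i(t),t)>0$, so one can glue on an explicit quadratic profile $\psi^{\mathcal D}_\eta(\kappa_i(t),t)\bigl[1-(x-\kappa_i(t))^2/(2\delta)\bigr]$ in a fixed $\delta$-layer, with no $\alpha$-dependence in the construction; the $\alpha$ enters only on the adjacent monotone block, where $|\partial_xm|$ is bounded below. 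One then sends $\alpha\to\infty$ first and $\eta\to 0$ afterwards, using $\lambda^{\mathcal D}_\eta\to\lambda^{\mathcal D\mathcal D}_i$. This is the missing idea.

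Second, you propose to take a pointwise minimum of local supersolutions and then mollify to a genuine $C^{2,1}$ supersolution. The paper does neither: it proves a tailored maximum principle (Lemma~\ref{appendixlem} and Proposition~\ref{appendixprop}) for piecewise-$C^2$ functions that are allowed to have finitely many corner curves $\mathbb X$ with $\partial_x\overline\varphi(x^+,t)<\partial_x\overline\varphi(x^-,t)$, and then constructs a single continuous $\overline\varphi$ by explicitly matching values across the $\kappa_i$'s. Your mollification claim is not ``standard'': smoothing a concave corner can introduce a region where $-\partial_{xx}\Phi$ has the wrong sign, and controlling the lower-order terms there uniformly in the mollification scale and in $\alpha$ is delicate. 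Working directly with the generalized class, as the paper does, is both simpler and rigorous.

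Finally, on monotone blocks the paper uses plain monotone connecting functions rather than your exponential ansatz $e^{\theta\alpha m}$; since $|\partial_xm|\ge\rho>0$ there, the term $-\alpha\,\partial_xm\,\partial_x\overline\varphi$ already dominates for large $\alpha$, and one never needs to confront the $\theta\alpha\,\partial_{xx}m$ and $\theta\alpha\,\partial_t m$ terms that your ansatz generates near the block endpoints.
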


\begin{remark}
{\rm If $m$ and $V$ are both  independent  of  time,
Theorem {\rm\ref{ldnthm2}} is  reduced to
\cite[Theorem {\rm 1.2}]{PZ2017} for  the elliptic eigenvalue problem \eqref{elliptic}.
However, the techniques used to prove Theorem {\rm\ref{ldnthm2}} are rather different from those in \cite{PZ2017},
due to the lack of variational characterization for the principal eigenvalue of time-dependent problem \eqref{advection1}.}
\end{remark}


\begin{remark}{\rm
When $\mathbf{B}=\emptyset$, i.e. there is no spatial degeneracy in advection,  $\mathbf{E}(\mathcal{N},\mathcal{N})= \mathbf{E}(\mathcal{N},\mathcal{D})= \mathbf{E}(\mathcal{D},\mathcal{N})= \mathbf{E}(\mathcal{D},\mathcal{D})=\emptyset$ and  Theorem \ref{ldnthm2} is a slightly stronger version of Theorem \ref{ldnthm1} without the  nondegeneracy
assumption on the  spatial critical points of $m$.
See Remark \ref{rem2.1} for two examples.
When $\mathbf{B}\neq\emptyset$,
if we let the strip  $(\kappa_i(t),\,\kappa_{i+1}(t))$ shrinks to some curve $\tilde{\kappa}_i(t)$
for every  $i\in \mathbf{B}$,
then
  $$
  \lambda^{\mathcal{D}\mathcal{D}}\big((\kappa_i,\kappa_{i+1})\big)\to\infty, \ \
 \lambda^{\mathcal{D}\mathcal{N}}\big((\kappa_i,\kappa_{i+1})\big)\to\infty, \ \
  \lambda^{\mathcal{N}\mathcal{D}}\big((\kappa_i,\kappa_{i+1})\big)\to\infty,
$$
$$ \lambda^{\mathcal{N}\mathcal{N}}\big((\kappa_i,\kappa_{i+1})\big)\to
  \frac{1}{T}\int_0^T V(\tilde{\kappa}_i(s),s)\,\mathrm{d}s.$$
Hence, \eqref{eq:limit2}
is reduced to \eqref{eq:limit1},
i.e.
Theorem {\rm \ref{ldnthm2}}
coincides with Theorem {\rm \ref{ldnthm1}}.
}
\end{remark}

\begin{remark}\label{re-th2}
{\rm
We further illustrate Theorem {\rm\ref{ldnthm2}} 
by 
the special case 
\begin{equation}\label{abvection_m}
    m(x,t)=m_1(x)m_2(t),
\end{equation}
where $m_2\in C([0,T])$ is $T$-periodic and $m_1\in C^{2}([0,1])$.
In the context of Theorem {\rm\ref{ldnthm2}}, 
assume that $m_2>0$ on $[0,T]$
and the graph of $m_1$ is given as in Fig. \ref{figure1_1} with the set of critical points $\{\kappa_i: 0\leq i\leq 10\}$.
Then $\mathbf{A}=\{0, 4, 6\}$, $\mathbf{B}=\{2, 5, 7, 9\}$, and $\mathbf{C}=\{1, 3, 8\}$. Using the notations in Theorem {\rm\ref{ldnthm2}},
$\mathbf{E}=\{0, 4\}$, $ \mathbf{E}(\mathcal{N},\mathcal{N})=\{9\}$,  $\mathbf{E}(\mathcal{N},\mathcal{D})=\{2\}$, $\mathbf{E}(\mathcal{D},\mathcal{N})=\{5\}$, and $\mathbf{E}(\mathcal{D},\mathcal{D})=\{7\}$. Then 
Theorem {\rm\ref{ldnthm2}} yields
\begin{equation*}
\begin{split}
\lim_{\alpha\rightarrow\infty}\lambda(\alpha)
=\min\Bigg\{&\frac{1}{T}\int_0^T V(0,s)\mathrm{d}s,\ \ \frac{1}{T}\int_0^T V(\kappa_4,s)\mathrm{d}s, \ \
\lambda^{\mathcal{N}\mathcal{N}}\big((\kappa_9, \kappa_{10})\big),\\
&\lambda^{\mathcal{N}\mathcal{D}}\big((\kappa_2, \kappa_3)\big),\ \ \lambda^{\mathcal{D}\mathcal{N}}\big((\kappa_5, \kappa_6)\big),\ \ \lambda^{\mathcal{D}\mathcal{D}}\big((\kappa_7, \kappa_8)\big)
\Bigg\}.
\end{split}
\end{equation*}
We also refer to Propositions {\rm \ref{ldnlem2}} and {\rm \ref{ldnlem3}} for further details.
}
\begin{figure}[http!!]
  \centering
\includegraphics[height=2.2in]{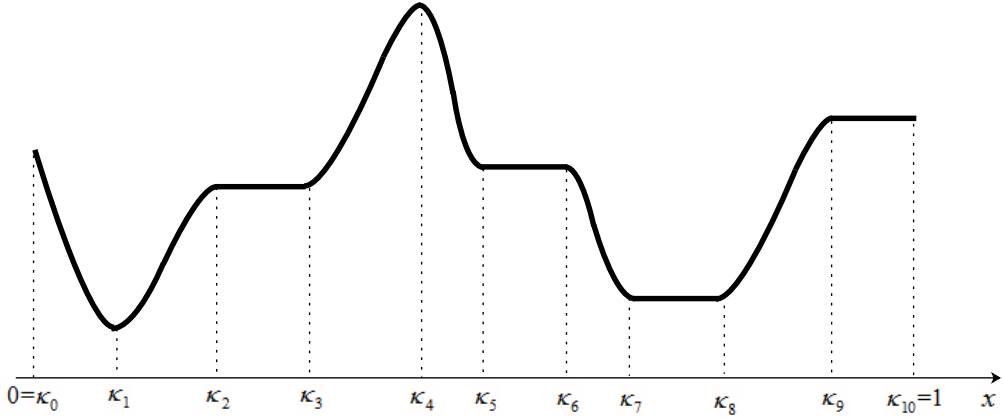}
  \caption{\small  
  An example of $m_1(x)$ with
  degenerate critical points
  in Remark \ref{re-th2}, where $\kappa_i$ ($0\leq i\leq 10$) denotes  its critical point.}\label{figure1_1}
  \end{figure}
\end{remark}

Our next result concerns  the effect of temporally degenerate advection
on the limit of $\lambda(\alpha)$ as $\alpha\rightarrow\infty$. To emphasize the ideas
of our proof and also make the presentation of our result clearer, 
assume $\partial_xm(x,t)=b(t)$ for some  $T$-periodic function $b$, where $b$ allows to vanish somewhere
and is referred as the temporal degeneracy. 
Then problem \eqref{advection1} becomes
\begin{equation}\label{LTD_eq 1}
\begin{cases}
 \begin{array}{ll}
 \smallskip
\partial_{t}\varphi-\partial_{xx}\varphi-\alpha b(t)\partial_{x}\varphi+V(x,t)\varphi=\lambda \varphi\ \ & \text{in}\,\,(0,1)\times[0,T],\\
\smallskip
\partial_{x}\varphi(0,t)=\partial_{x}\varphi(1,t)=0&{\text{on}}\,\,[0,T],\\
\varphi(x,0)=\varphi(x,T) &{\text{on}}\,\,(0,1).
 \end{array}
\end{cases}
\end{equation}

Hereafter, for any $\phi\in C([0,1]\times \mathbb{R})$, we use the notations $\phi(\cdot,t^+)$ and $\phi(\cdot,t^-)$
to represent the right and left limit of $\phi$ at time $t$ respectively; similarly,
the notations $\phi(x^+,\cdot)$ and $\phi(x^-,\cdot)$ mean the right and left limit
at spatial location $x$.
\begin{Thm}\label{LTD_thm main}
Given any sequence $\{t_i\}_{0\leq i\leq N+1}$
with $0=t_0<t_1<\ldots<t_{N+1}=T$, denote
\begin{equation*}
  \begin{array}{c}
  \mathbb{A}=\big\{0\leq i\leq N:\ \,b(t)<0 \,\text{ in } (t_{i},t_{i+1})\big\},\\
  \mathbb{B}=\big\{0\leq i\leq N:\ \, b(t)\equiv0\, \text{ on } [t_{i},t_{i+1}]\big\},\\
  \mathbb{C}=\big\{0\leq i\leq N:\ \, b(t)>0\, \text{ in } (t_{i},t_{i+1})\big\}.
  \end{array}
\end{equation*}
Assume
$\mathbb{A}\cup\mathbb{B}\cup\mathbb{C}=\{0,\ldots, N\}$. Let $\lambda(\alpha)$ be the principal eigenvalue of \eqref{LTD_eq 1}. Then
$\lambda(\alpha)\to\lambda_\infty$ as $\alpha\rightarrow\infty$,
where $\lambda_\infty$ is the principal eigenvalue of the problem

\begin{equation}\label{LTD_auxi main}
\begin{cases}
\left.\begin{array}{ll}
\smallskip
 \partial_{t}\psi+V(0,t)\psi=\lambda \psi\ \ \ \ &{\text{in}}\,\,(0,1)\times\left(t_i,t_{i+1}\right],  \\
             \psi(x,t_{i}^+)\equiv\psi(0,t_{i}^-)&{\text{on}}\,\,(0,1),\\
              \end{array} \right\}\ i\in \mathbb{A} \\
\left.\begin{array}{ll}
\smallskip
 \partial_{t}\psi-\partial_{xx}\psi+V(x,t)\psi=\lambda\psi&{\text{in}}\,\,(0,1)\times\left(t_i,t_{i+1}\right],  \\
              \psi(x,t_{i}^+)=\psi(x,t_{i}^-)&{\text{on}}\,\,(0,1),\\
             \end{array}\right\}\ i\in \mathbb{B}\\
\left.\begin{array}{ll}
\smallskip
 \partial_{t}\psi+V(1,t)\psi=\lambda \psi\ \ \ \ \ &{\text{in}}\,\,(0,1)\times\left(t_i,t_{i+1}\right],  \\
             \psi(x,t_{i}^+)\equiv\psi(1,t_{i}^-)&{\text{on}}\,\,(0,1),\\
              \end{array}\right\}\ i\in \mathbb{C}\\
\,\,\,\partial_{x}\psi(0,t)=\partial_{x}\psi(1,t)=0\,\,\ {\text{on}}\,\,\left[0,T\right],\\
\,\,\,\psi(x,0)=\psi(x,T)\,\,\,\,\,\quad\quad\quad{\text{on}}\,\,(0,1).
\end{cases}
\end{equation}
\end{Thm}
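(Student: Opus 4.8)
The plan is to prove Theorem \ref{LTD_thm main} by a two-sided estimate on $\lambda(\alpha)$: construct suitable test functions (super/sub-solutions) to squeeze $\limsup_{\alpha\to\infty}\lambda(\alpha)$ from above by $\lambda_\infty$, and use a compactness/weak-limit argument on the normalized eigenfunctions to bound $\liminf_{\alpha\to\infty}\lambda(\alpha)$ from below by $\lambda_\infty$. The heuristic is that on a time-interval $(t_i,t_{i+1})$ with $b(t)<0$ the strong drift to the left forces the eigenfunction to become spatially flat and equal to its value frozen at $x=0$, so the diffusion term drops out and the operator degenerates to the ODE $\partial_t\psi+V(0,t)\psi=\lambda\psi$; symmetrically $b(t)>0$ forces concentration at $x=1$; and on a subinterval where $b\equiv0$ the full parabolic operator survives. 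The matching conditions $\psi(x,t_i^+)\equiv\psi(0,t_i^-)$ etc.\ in \eqref{LTD_auxi main} encode exactly this transition between ``full'' and ``frozen'' phases, with the periodicity condition gluing the ends.

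For the upper bound I would take $\psi_\infty$ the positive principal eigenfunction of the limit problem \eqref{LTD_auxi main} (its existence is guaranteed by Krein--Rutman as invoked in the excerpt) and mollify it in time near the switching times $t_i$ to obtain, for each small $\delta>0$, a smooth positive $\overline\psi_\delta$ which is a supersolution of \eqref{LTD_eq 1} with eigenvalue $\lambda_\infty+o_\delta(1)$ for all $\alpha$ large; this requires that on the $\mathbb{A}$- and $\mathbb{C}$-intervals the spatial derivatives of $\overline\psi_\delta$ are $O(\delta)$, so the unwanted terms $-\partial_{xx}\overline\psi_\delta-\alpha b(t)\partial_x\overline\psi_\delta$ can be absorbed provided one chooses $\overline\psi_\delta$ constant in $x$ there (which is consistent with $\partial_x\overline\psi_\delta=0$ at the Neumann boundary). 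By the comparison principle for time-periodic parabolic operators (e.g.\ \cite[Proposition 7.2]{Hess} together with the characterization of $\lambda(\alpha)$ via positive supersolutions) one gets $\lambda(\alpha)\le\lambda_\infty+o_\delta(1)$, hence $\limsup_{\alpha\to\infty}\lambda(\alpha)\le\lambda_\infty$. For the lower bound I would normalize the principal eigenfunction $\varphi_\alpha$ of \eqref{LTD_eq 1} (say $\max\varphi_\alpha=1$), derive uniform-in-$\alpha$ parabolic estimates away from the switching times on the $\mathbb{B}$-intervals and an $L^2$-in-$x$ bound on $\partial_x\varphi_\alpha$ on the $\mathbb{A},\mathbb{C}$-intervals of order $O(1/\alpha)$ coming from multiplying the equation by $\varphi_\alpha$ and integrating, then extract a subsequential limit $\varphi_\infty$ which is nonnegative, nontrivial, satisfies \eqref{LTD_auxi main} in the appropriate weak sense, and has eigenvalue $\liminf\lambda(\alpha)$; since $\lambda_\infty$ is the principal (smallest real part) eigenvalue of \eqref{LTD_auxi main} and $\varphi_\infty\ge0,\not\equiv0$, one concludes $\liminf_{\alpha\to\infty}\lambda(\alpha)\ge\lambda_\infty$.

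The main obstacle I anticipate is the lower-bound (weak-limit) step, specifically showing that the limit $\varphi_\infty$ is genuinely nontrivial and that it inherits the correct ``frozen'' matching conditions $\varphi_\infty(x,t_i^+)\equiv\varphi_\infty(0,t_i^-)$ (resp.\ $\varphi_\infty(1,t_i^-)$) across the switching times. The difficulty is twofold: first, the drift term $\alpha b(t)\partial_x\varphi_\alpha$ is singular as $\alpha\to\infty$, so one must track where the mass of $\varphi_\alpha$ concentrates and rule out that it escapes to a set of measure zero in a way that makes $\varphi_\infty\equiv0$; a maximum-principle/Harnack argument along characteristics $\dot x=-\alpha b(t)$ should show the eigenfunction cannot decay too fast, propagating a uniform lower bound on a boundary layer near $x=0$ (on $\mathbb{A}$-intervals) into the interior. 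Second, near each $t_i$ there is a genuine boundary layer in time where the solution transitions from a full parabolic profile to a spatially constant one; controlling this transition — essentially an initial-layer analysis for a problem with vanishing $b$ near $t_i$ — and showing it contributes only $o(1)$ to the eigenvalue is the technical heart of the argument, and is presumably where the temporal nondegeneracy structure ($b$ changing sign only at the $t_i$) is used. I would handle this by the same mollification device as in the upper bound applied now to sub-solutions, obtaining matching two-sided bounds $\lambda_\infty-o_\delta(1)\le\lambda(\alpha)\le\lambda_\infty+o_\delta(1)$ and then letting $\delta\to0$.
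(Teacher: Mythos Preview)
Your upper-bound construction has a genuine gap. If you take $\overline\psi_\delta$ spatially constant on an interval with $i\in\mathbb{C}$ (say), then indeed $\partial_{xx}\overline\psi_\delta=\partial_x\overline\psi_\delta=0$ there, but the remaining operator gives
\[
\mathcal{L}_\alpha\overline\psi_\delta
=\partial_t\overline\psi_\delta+V(x,t)\overline\psi_\delta
=\lambda_\infty\overline\psi_\delta+\bigl[V(x,t)-V(1,t)\bigr]\overline\psi_\delta,
\]
since the limit eigenfunction only satisfies $\partial_t\psi_\infty+V(1,t)\psi_\infty=\lambda_\infty\psi_\infty$. The discrepancy $V(x,t)-V(1,t)$ is neither small nor of a definite sign, so this function is not a super- (or sub-) solution for any value close to $\lambda_\infty$, no matter how large $\alpha$ is. Mollifying in $t$ near the switching times does nothing to cure this; the problem lives in the interior of each $\mathbb{A}$/$\mathbb{C}$-interval.

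The paper's remedy is precisely to \emph{not} take the test function constant in $x$. One writes $\overline\varphi=\gamma(t)\,\beta(x,t)\,\psi_\infty$, where on a $\mathbb{C}$-interval $\beta$ satisfies $\beta=1$, $\partial_x\beta=0$ at $x=1$ and $\partial_x\beta<0$ on $[0,1)$; then $-\alpha b(t)\partial_x\beta\ge \alpha\rho>0$ on $[0,1-2\delta]$ and this large positive term dominates the potential error $V(x,t)-V(1,t)$ for $\alpha$ large, while on $[1-2\delta,1]$ both $\beta$ is nearly flat and $V(x,t)-V(1,t)$ is genuinely small. The factor $\gamma(t)$, with carefully placed kinks at $t_i$ and $t_i\pm\delta$, absorbs the transition errors between phases (this, not a mere mollification, is what handles the ``initial layer'' you identified). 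The sub-solution is built symmetrically with $\partial_x\underline\beta$ of the opposite sign, so both inequalities come from the same mechanism; the paper does not use any compactness/weak-limit argument for the lower bound. Your proposed compactness route for the $\liminf$ may be workable in principle, but the nontriviality of the limit and the derivation of the frozen matching conditions across $t_i$ are exactly the hard points, and you have not indicated how to close them.
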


The existence and uniqueness of the principal eigenvalue  
for problem \eqref{LTD_auxi main} is proved in Proposition
\ref{principaleigenfunction}, which is shown to be real and simple.
Some typical examples included by Theorem \ref{LTD_thm main} are provided
in Propositions \ref{LTD_thm 1}-\ref{LTD_thm 3}. Theorem \ref{LTD_thm main} shows that for $i\in\mathbb{B}$, i.e. there is no advection in
time interval $\left[t_i,t_{i+1}\right]$,  the whole space  $[0,1]$ turns out to influence the asymptotic behaviors of
 principal eigenvalue, while for $i\in \mathbb{A}$ (resp. $i\in \mathbb{C}$), only the boundary points in $\{(0, t): \, t\in[0,T]\}$ (resp. $\{(1, t): \, t\in[0,T]\}$) matter. In particular, when $\mathbb{B}=\emptyset$ and $\mathbb{A}\cup\mathbb{C}=\{0,\ldots, N\}$, we can deduce from Theorem \ref{LTD_thm main} and the definition of $\lambda_\infty$ in \eqref{LTD_auxi main} that
$$\lim_{\alpha\rightarrow\infty}\lambda(\alpha)=\frac{1}{T}\left[\sum_{i\in \mathbb{A}}\int^{t_{i+1}}_{t_i}V(0,s)\mathrm{d}s+\sum_{i\in \mathbb{C}}\int^{t_{i+1}}_{t_i}V(1,s)\mathrm{d}s\right].$$

\begin{remark}\label{rem_4.2}
{\rm In addition to the temporally degenerate case considered in Theorem {\rm\ref{LTD_thm main}},
we can in fact  deal with some more general temporal degeneracy. 
For example,  assume that there exist  constants $\kappa_*\in(0,1)$ and $t_*\in(0,T)$ such that 
\begin{equation}\label{examp1}
\begin{cases}
\partial_x m(x,t)>0 \ \ &\text{in } \, \left[0,\kappa_*\right)\times(0,t_*),\\
\partial_x m(x,t)<0 &\text{in } \, \left(\kappa_*,1\right]\times(0,t_*),\\
\partial_x m(\kappa_*,t)=0 &  \text{on } \, (0,t_*),\\
\partial_x m(x,t)=0 & \text{in } \,
\left[0,1\right]\times \left[t_*,T\right].
\end{cases}
\end{equation}
See Fig. {\rm\ref{figure6_a}} for the profile of  $\partial_x m$.
We may use the similar arguments in Proposition \ref{LTD_thm 1} to  show $\lambda(\alpha)\to \lambda_{*}$ as $\alpha\rightarrow\infty$,
where $\lambda_{*}$ denotes the principal eigenvalue of the problem
$$
\left\{\begin{array}{ll}
\smallskip
\partial_{t}\psi+V(\kappa_*,t)\psi=\lambda\psi & {\mathrm{in}}\, (0,1)\times(0,t_*),\\
\smallskip
\partial_{t}\psi-\partial_{xx}\psi+V(x,t)\psi=\lambda\psi
& {\mathrm{in}}\, (0,1)\times \left[t_*,T\right],\\
\smallskip
\psi(x,t_*^{+})=\psi(x,t_*^{-})&{\mathrm{ on}}\,(0,1),\\
\smallskip
\partial_{x}\psi(0,t)=\partial_{x}\psi(1,t)=0& {\mathrm{on}}\, \left[0,T\right],\\
\psi(x,0)=\psi(x,T)&{\mathrm{on}}\,(0,1),
\end{array}
\right.$$
where the existence and uniqueness of $\lambda_{*}$
can be proved  as in Proposition {\rm\ref{principaleigen}}. 
Observe that the limit value $\lambda_*\to \frac{1}{T}\int_0^T V(\kappa_*,s)\mathrm{d}s$ as $t_* \nearrow T$,  in agreement with  the conclusions 
of Theorem {\rm \ref{ldnthm1}}.
}

\begin{figure}[http!!]
  \centering
\includegraphics[height=1.8in]{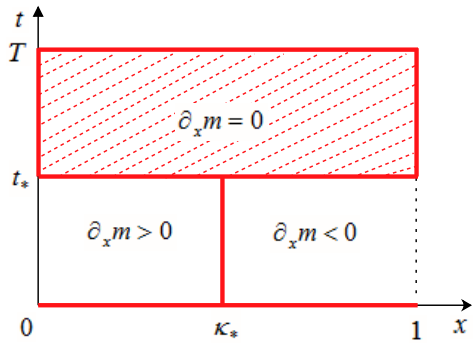}
  \caption{\small  Profile of  $\partial_x m$ defined by \eqref{examp1}, where the red shaded area and  red  solid lines together represent the set $\{(x,t)\in[0,1]\times[0,T]:\partial_x m(x,t)=0\}$.}\label{figure6_a}
  \end{figure}

\end{remark}

\subsection{Discussion}
The spatially  
and temporally degenerate advection  are separately considered in  Theorems {\rm\ref{ldnthm2}} and  {\rm\ref{LTD_thm main}}. In fact,  our ideas 
in the paper can deal with the case when the advection $m$ possesses both spatial
and temporal degeneracy. Below we provide an example as an illustration.
Assume  $m$ takes the form of \eqref{abvection_m}, and there are
constants $0<\kappa_1<\kappa_2<1$ and $0<t_*<T$ such that
$$ m_2(t)>0\ \text{ in }\,\left(0,t_*\right)\, \text{ and }
\,m_2(t)\equiv0\ \text{ in }\,\left[t_*,T\right],\ \ \text{(Temporal degeneracy)},$$
$$m_1'(x)>0\,\text{ in  }\,\left[0,\kappa_1\right),\quad m_1'(x)\equiv0\text{ in }\,[\kappa_1,\kappa_2]\,\,
\text{ and }\,\, m_1'(x)<0\,\text{ in }\,\left(\kappa_2,1\right],\ \ \text{(Spatial degeneracy)}.$$
See  Fig. {\rm\ref{figure6_b}} for the profile of the spatio-temporally degenerate $\partial_x m$. 
Combining the proofs of Propositions  {\rm\ref{ldnlem2}} and {\rm\ref{LTD_thm 1}},
one can prove that $\lambda(\alpha)\to \tilde\lambda_*$ as $\alpha\to\infty$, where $\tilde\lambda_*$
denotes the principal eigenvalue of the problem
\begin{equation}\label{LTD_auxi}
\left\{\begin{array}{ll}
\smallskip
\partial_{t}\psi-\partial_{xx}\psi+V\psi=\lambda\psi\ \
&{\mathrm{in}}\,\,[\kappa_1,\kappa_2]\times\left(0,t_{*}\right],  \\
\smallskip
\partial_{x}\psi(\kappa_1,t)=\partial_{x}\psi(\kappa_2,t)=0
&{\mathrm{on}}\,\,\left(0,t_{*}\right],\\
\smallskip
 \psi(x,t_{*}^+)=\hat{\psi}(x,t_{*})&{\mathrm{on}}\,\,(0,1),\\
 \smallskip
\partial_{t}\psi-\partial_{xx}\psi+V\psi=\lambda\psi
&{\mathrm{in}}\,\,(0,1)\times\left(t_{*},T\right], \\
\smallskip
\partial_{x}\psi(0,t)=\partial_{x}\psi(1,t)=0&{\mathrm{on}}\,\,\left(t_{*},T\right],\\
\psi(x,0)=\psi(x,T)&{\mathrm{on}}\,\,(0,1),
\end{array}\right.
\end{equation}
where $\hat{\psi}(x,t_{*})$ is the extension of $\psi(x,t_{*})$ by setting
$\hat{\psi}(x,t_{*})=\psi(\kappa_1,t_{*})$ for $x\in[0,\kappa_1)$ and
$\hat{\psi}(x,t_{*})=\psi(\kappa_2,t_{*})$ for $x\in(\kappa_2, 1]$.
The existence and uniqueness of $\tilde\lambda_*$ 
follows from the same arguments as in Proposition {\rm\ref{principaleigen}}.
\begin{figure}[http!!]
  \centering
\includegraphics[height=1.8in 
]{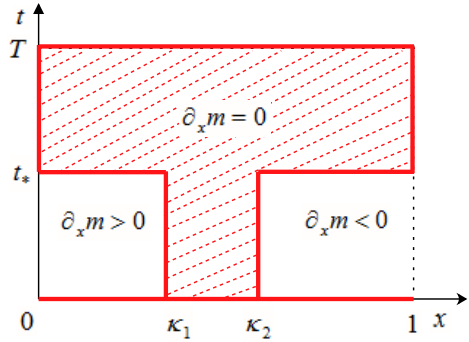}
  \caption{\small  Profile of  $\partial_x m$ for the spatio-temporally degenerate advection, where the red shaded area and red solid lines together represent the set $\{(x,t)\in[0,1]\times[0,T]:\partial_x m(x,t)=0\}$.}\label{figure6_b}
  \end{figure}

For general advection $m$, the cases of spatio-temporal degeneracies
are so many that we can not address 
them specifically 
or state the results in a general theorem. This is merely a technical
point which does not involve many new ideas, and thus
is left to the interested reader. Moreover, the techniques
developed in this paper can be used to investigate the
asymptotic behaviors of the principal eigenvalue 
 for \eqref{advection1} subject to other boundary conditions, including the zero
Dirichlet boundary conditions and the Robin boundary conditions. 

Our proofs in this paper  rely heavily upon the construction of (almost) optimal pairs of
sub-solutions and super-solutions in the sense of Definition \ref{appendixldef},
and applications of Proposition \ref{appendixlem}. 
To clarify the ideas, instead of proving Theorems \ref{ldnthm1}-\ref{LTD_thm main}
directly, we shall provide the detailed proof for some typical examples
and the three main theorems follow by a similar argument.

\subsection{Organization of the paper}
In Sect. \ref{preliminary},  we present the theory of  weak sub-solutions and super-solutions.
In Sect. \ref{Nondegenerate},
we discuss the nondegenerate advection and prove Theorem \ref{ldnthm1}.
Sect. \ref{Spatially_degenerate} concerns spatially degenerate advection and Theorem \ref{ldnthm2}
is proved there. The temporally degenerate advection is considered in Sect. \ref{S4} and
Theorem \ref{LTD_thm main} is proved,
which requires more delicate analysis.
Finally,  we verify the existence and uniqueness of the principal eigenvalue for
problem \eqref{LTD_auxi main} in Appendix \ref{appenB}, and prove \eqref{ldnaux} in Appendix \ref{appenC}.


\section{Generalized super/sub-solution for a time-periodic parabolic operator}\label{preliminary}
In this section, we introduce the  definition of super/sub-solution for a time-periodic parabolic operator, and then
establish the relation of positive super/sub-solution and the sign of principal
eigenvalue of an associated eigenvalue problem. This result is a generalization of Proposition 2.1 and Corollary 2.1
in \cite{PZ2015} in one-dimensional case, and plays a vital role in the establishment
of main findings in the present paper.

Let $\mathcal{L}$ denote the following linear parabolic operator on $(0,1)\times\mathbb{R}$:
\begin{equation}\label{liu-01}
    \mathcal{L}=\partial_{t}\varphi-a_1(x,t)\partial_{xx}- a_2(x,t)\partial_x+a_0(x,t).
\end{equation}
We always assume $a_1(x,t)>0$ so that $\mathcal{L}$ is uniformly elliptic for each $t\in\mathbb{R}$, and
assume $a_0,a_1,a_2\in C([0,1]\times\mathbb{R})$ are $T$-periodic with respect to $t$.

Consider the linear parabolic problem
\begin{equation}\label{appendix}
 \left\{\begin{array}{ll}
 \smallskip
\mathcal{L}\varphi=0\ \ &{\text{in}}\,\,(0,1)\times[0,T],\\
\smallskip
 \partial_x\varphi(0,t)=\partial_x\varphi(1,t)=0 \ \ & {\text{on}}\,\,[0,T], \\
 \varphi(x,0)=\varphi(x,T) &{\text{on}}\,\,(0,1).
 \end{array}
 \right.
 \end{equation}
We now give the definition of super/sub-solution corresponding to \eqref{appendix}. 

\begin{definition}\label{appendixldef}
The lower semi-continuous function $\overline{\varphi}$ in $[0,1]\times[0,T]$ is  called a  super-solution of \eqref{appendix} if there exist sets $\mathbb{X}$ and $\mathbb{T}$ consisting of at most finitely many continuous curves: 
$$\mathbb{X}=\emptyset \text{ or }\mathbb{X}=\left\{(\kappa_i(t),t):\ \,t\in(0,T),\ i=1,\ldots,N\right\},$$
$$\mathbb{T}=\emptyset \text{ or }\mathbb{T}=\left\{(x,\tau_i(x)):\ \,x\in(0,1),\ i=1,\ldots,M\right\},$$
for some integers $N, M\geq1$, where the continuous functions $\kappa_i:\ [0,T]\mapsto (0,1)$ and $\tau_i:\ [0,1]\mapsto (0,T)$ are such that
\smallskip

\noindent{ } {\rm(1)} $\overline{\varphi}\in C\left((0,1)\times(0,T)\setminus\mathbb{T}\right)\cap C^2\left((0,1)\times(0,T)\setminus\left(\mathbb{X}\cup\mathbb{T}\right)\right)$;

\smallskip

\noindent{ } {\rm(2)} $\partial_x\overline{\varphi}(x^+,t)<\partial_x\overline{\varphi}(x^-,t)$,\ $\forall (x,t)\in\mathbb{X};$

\smallskip

\noindent{  } {\rm(3)}
$\overline{\varphi}(x,t)=\overline{\varphi}(x,t^-)<\overline{\varphi}(x,t^+)$ or $\overline{\varphi}(x,t^-)=\overline{\varphi}(x,t^+)$ but $\partial_t\overline{\varphi}(x,t^+)<\partial_t\overline{\varphi}(x,t^-)$, $\forall (x,t)\in\mathbb{T};$

\smallskip

\noindent{ } {\rm(4)}
 $\overline{\varphi}$ satisfies
  \begin{equation}\label{def}
 \left\{\begin{array}{ll}
 \smallskip
\mathcal{L}\overline{\varphi}\geq 0\ \ &{\mathrm{ in }}\,\,((0,1)\times(0,T))\setminus\left(\mathbb{X}\cup\mathbb{T}\right),\\
\smallskip
 \partial_x\overline{\varphi}(0,t)\leq0,\ \ \partial_x\overline{\varphi}(1,t)\geq0  &{\mathrm{ on }}\,\,[0,T], \\
 \overline\varphi(x,0)\geq\overline\varphi(x,T) &\mathrm{ on } \,\,(0,1).
 \end{array}
 \right.
 \end{equation}
A super-solution $\overline{\varphi}$ is called  a strict super-solution if it is not a solution of \eqref{appendix}. Moreover,
a function $\underline{\varphi}$ is called a (strict) sub-solution of \eqref{appendix} if $-\underline{\varphi}$ is a  (strict) super-solution.
\end{definition}

\begin{lemma}\label{appendixlem}
Let $\overline{\varphi}\geq0$ be a super-solution of \eqref{appendix} defined in Definition {\rm\ref{appendixldef}}. Then $\overline{\varphi}>0$ in $[0,1]\times[0,T]$ unless $\overline{\varphi}\equiv0$.
\end{lemma}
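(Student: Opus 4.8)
The plan is to argue by a strong maximum principle / Harnack-type argument adapted to the generalized super-solutions of Definition~\ref{appendixldef}, handling the singular curves in $\mathbb{X}$ and $\mathbb{T}$ by the one-sided jump conditions (2) and (3). First I would suppose, for contradiction, that $\overline{\varphi}\geq 0$ is a super-solution with $\overline{\varphi}\not\equiv 0$ but $\overline{\varphi}(x_0,t_0)=0$ for some $(x_0,t_0)\in[0,1]\times[0,T]$. Using $T$-periodicity together with $\overline{\varphi}(x,0)\geq\overline{\varphi}(x,T)$, I may translate in $t$ and assume $t_0$ is chosen so that the zero is ``reached from the past''; more precisely I would look at the first time a zero occurs. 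On each open parabolic cylinder $((0,1)\times(0,T))\setminus(\mathbb{X}\cup\mathbb{T})$ the function is a genuine $C^2$ super-solution of $\mathcal{L}\overline{\varphi}\geq 0$ with $a_1>0$, so the classical strong maximum principle for parabolic operators applies: if $\overline{\varphi}$ attains an interior zero (its minimum, since $\overline{\varphi}\geq0$) at an interior point of such a cylinder, then $\overline{\varphi}\equiv 0$ on the component of that point lying at times $\leq t_0$, back to the parabolic boundary of the cylinder.

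The crux is then a \emph{propagation} step: I must show the vanishing set, once nonempty, spreads across the curves of $\mathbb{X}$ and $\mathbb{T}$ and across the lateral boundary $\{0,1\}\times[0,T]$, eventually forcing $\overline{\varphi}\equiv0$ on all of $[0,1]\times[0,T]$, contradicting $\overline{\varphi}\not\equiv0$. Across a spatial curve $(\kappa_i(t),t)\in\mathbb{X}$: if $\overline{\varphi}$ vanishes on one side up to the curve, then since $\overline{\varphi}\geq0$ the curve is a minimum, and by continuity (condition (1), $\overline{\varphi}\in C$ across $\mathbb{X}$) we get $\overline{\varphi}(\kappa_i(t),t)=0$; the concavity-type jump $\partial_x\overline{\varphi}(x^+,t)<\partial_x\overline{\varphi}(x^-,t)$ is \emph{consistent with} (does not obstruct) $\overline{\varphi}$ being a super-solution in the distributional sense, so Hopf's lemma applied from the other side cannot be used to derive a contradiction — instead one checks that $\overline{\varphi}$ is still a super-solution in a full neighborhood in the sense that the measure $\mathcal{L}\overline{\varphi}$ is nonnegative (a nonnegative singular part on $\mathbb{X}$), whence the weak strong maximum principle pushes the zero set to the other side. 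Across a temporal curve $(x,\tau_i(x))\in\mathbb{T}$ one uses condition (3): either $\overline{\varphi}$ jumps \emph{up} in time, which only helps keep $\overline{\varphi}\geq0$ and the zero set is inherited forward, or it is continuous with $\partial_t\overline{\varphi}(x,t^+)<\partial_t\overline{\varphi}(x,t^-)$, again a nonnegative singular contribution to $\mathcal{L}\overline{\varphi}$ in $t$, so the zero propagates forward in time. At the lateral boundary, the sign conditions $\partial_x\overline{\varphi}(0,t)\leq0$, $\partial_x\overline{\varphi}(1,t)\geq0$ together with Hopf's lemma prevent a boundary zero unless the zero is interior on that time-slice, so boundary points do not shield a positive region.

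Concretely the steps I would carry out, in order: (i) reduce to the case of a first zero time $t_0\in(0,T]$ using periodicity and $\overline{\varphi}(\cdot,0)\geq\overline{\varphi}(\cdot,T)$; (ii) on the largest sub-cylinder avoiding $\mathbb{X}\cup\mathbb{T}$ containing the zero, apply the classical parabolic strong maximum principle to conclude $\overline{\varphi}\equiv0$ there for all earlier times; (iii) cross each curve of $\mathbb{X}$ using continuity plus the fact that the jump in (2) has the ``good'' sign, so $\overline{\varphi}$ remains a (weak) super-solution and the zero set extends; (iv) cross each curve of $\mathbb{T}$ similarly using (3); (v) use the Neumann-type inequalities in \eqref{def} and Hopf's lemma at $x=0,1$; (vi) iterate over the finitely many curves and over one period, then re-apply periodicity, to deduce $\overline{\varphi}\equiv0$ on $[0,1]\times[0,T]$ — the desired contradiction, proving $\overline{\varphi}>0$ everywhere. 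The main obstacle is step (iii)--(iv): verifying rigorously that the one-sided derivative jumps prescribed in Definition~\ref{appendixldef}(2)--(3) contribute a \emph{nonnegative} singular measure to $\mathcal{L}\overline{\varphi}$, so that a distributional/weak version of the strong maximum principle (rather than the classical pointwise one) legitimately carries the zero set across the singular curves; this is exactly where one must be careful about the direction of each inequality, and it is the technical heart generalizing \cite[Proposition 2.1]{PZ2015}.
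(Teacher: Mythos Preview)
Your overall strategy is in the right direction, but it is considerably more elaborate than what the paper does, and the part you flag as ``the technical heart'' (a distributional/weak strong maximum principle that carries the zero set across the singular curves in $\mathbb{X}$ and $\mathbb{T}$) is neither needed nor actually carried out in your sketch.

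The paper's argument avoids all propagation across singular curves by a simple pointwise observation. After first disposing of the lateral boundary $\{0,1\}\times[0,T]$ via Hopf's lemma, the paper picks an interior zero $(x_0,t_0)$ lying on the spatial boundary of the zero set $\{x:\overline\varphi(x,t_0)=0\}$ (this is possible since $\overline\varphi>0$ at $x=0,1$), and arranges $t_0>0$ using periodicity. The key step is then to show directly that $(x_0,t_0)\notin\mathbb{X}\cup\mathbb{T}$: since $\overline\varphi\geq 0$ and $\overline\varphi(x_0,t_0)=0$, the point is a minimum, so $\partial_x\overline\varphi(x_0^-,t_0)\leq 0$; combined with the jump condition~(2) this gives $\partial_x\overline\varphi(x_0^+,t_0)<0$, forcing $\overline\varphi<0$ immediately to the right --- a contradiction. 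The case $(x_0,t_0)\in\mathbb{T}$ is handled analogously using~(3). Once $(x_0,t_0)$ is known to lie away from $\mathbb{X}\cup\mathbb{T}$, the \emph{classical} strong maximum principle in a small ball gives $\overline\varphi\equiv0$ on that ball intersected with $\{t\leq t_0\}$, contradicting the choice of $x_0$ on the edge of the zero set.

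In short: you try to push the zero set \emph{through} the singular curves, which would require justifying a weak maximum principle you do not prove; the paper instead observes that the sign of the jumps in~(2)--(3) is precisely incompatible with $\overline\varphi$ attaining its minimum value $0$ on those curves, so a zero on the boundary of the zero set must lie in the smooth region, where the classical argument applies immediately. Your steps (iii)--(iv) can therefore be replaced by a two-line contradiction, and no distributional machinery is needed.
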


\begin{proof}
Assume that $\overline{\varphi}\not\equiv0$ and we shall prove $\overline{\varphi}>0$
in $[0,1]\times[0,T]$. First, by the Hopf's boundary lemma for parabolic equations (see e.g. \cite[Proposition 13.3]{Hess}), we deduce  $\overline{\varphi}>0$
on the boundary $\{0,1\}\times[0,T]$.

Suppose  the assertion fails, then there exists some interior point $(x_0,t_0)\in(0,1)\times[0,T]$ such that
\rm{(i)} $\overline{\varphi}(x_0,t_0)=0$ and \rm{(ii)} $x_0\in\partial\{x\in \Omega: \overline{\varphi}(x,t_0)=0\}$.
This is possible since  $\overline{\varphi}>0$
on the boundary $\{0,1\}\times[0,T]$. 
We mention that such point $(x_0,t_0)$ can be chosen such that $t_0>0$. Indeed, if $t_0=0$, the fact that $\overline\varphi(x_0,0)\geq\overline\varphi(x_0,T)$ implies $\overline\varphi(x_0,T)=0$. Then we can select some $(\tilde{x}_0,T)$, satisfying \rm{(i)} and \rm{(ii)}, to replace $(x_0,t_0)$.


Next, we claim that $(x_0,t_0)\not\in\mathbb{X}\cup\mathbb{T}$. If $(x_0,t_0)\in\mathbb{X}$, by {\rm{(2)}} in Definition \ref{appendixldef}, it holds that $\partial_x\overline{\varphi}(x_0^+,t_0)<\partial_x\overline{\varphi}(x_0^-,t_0)\leq0$. Since $\overline{\varphi}(x_0,t_0)=0$, this  contradicts $\overline{\varphi}\geq0$. If $(x_0,t_0)\in\mathbb{T}$, 
in light of {\rm{(3)}} in Definition \ref{appendixldef},
we  distinguish two cases:\ {\rm{(a)}} $\overline{\varphi}(x_0,t_0^-)<\overline{\varphi}(x_0,t_0^+)$ and
{\rm{(b)}}  $\overline{\varphi}(x_0,t_0^-)=\overline{\varphi}(x_0,t_0^+)$ to reach a contradiction.
When {\rm{(a)}} occurs, by  {\rm{(3)}}, $\overline{\varphi}(x_0,t_0^-)=\overline{\varphi}(x_0,t_0)=0$.  Choose  $\delta_1\in (0,\delta)$ small such that $B_{\delta_1}^+:=B_{\delta_1}(x_0,t_0)\cap((0,1)\times[0,t_0))$ satisfies $B_{\delta_1}^+\cap (\mathbb{X}\cup\mathbb{T})=\emptyset$,
$$\mathcal{L}\overline{\varphi}\geq 0\,\, \text{ in }\,\,B_{\delta_1}^+ \quad \text{and}\quad\overline{\varphi}(x_0,t_0^-)=0.$$
We may apply the classical strong maximum principle for parabolic equations (see e.g. Proposition 13.1 and Remark 13.2 in \cite{Hess}) to arrive at
$\overline{\varphi}\equiv0$ in $\overline{B}_{\delta_1}^+$. 
This contradicts to \rm{(ii)}.
When {\rm{(b)}} occurs, by our assumption, $\partial_t\overline{\varphi}(x_0,t_0^+)<\partial_t\overline{\varphi}(x_0,t_0^-)\leq0$, which together with {\rm(i)} contradicts $\overline{\varphi}\geq0$.
Hence, $(x_0,t_0)\not\in\mathbb{X}\cup\mathbb{T}$.

Therefore, there exists some $\delta_2\in (0,\delta)$ such that $B_{\delta_2}(x_0,t_0)\cap\left(\mathbb{X}\cup\mathbb{T}\right)=\emptyset$ and
$$\mathcal{L}\overline{\varphi}\geq 0\,\, \text{ in}\,\,B_{\delta_2}(x_0,t_0) \,\, \text{ and }\,\,\overline{\varphi}(x_0,t_0)=0.$$
Due to $t_0>0$, by the classical strong maximum principal, we can conclude that $\overline{\varphi}\equiv0$ in $B_{\delta_2}(x_0,t_0)\cap([0,1]\times[0,t_0])$, a contradiction to \rm{(ii)} again. The proof is now complete.
\end{proof}

Consider the following eigenvalue problem:
\begin{equation}\label{A.3}
 \left\{\begin{array}{ll}
 \medskip
\mathcal{L}\varphi=\lambda(\mathcal{L})\varphi\ \ &{\text{in}}\,\,(0,1)\times[0,T],\\
\medskip
 \partial_x\varphi(0,t)=\partial_x\varphi(1,t)=0 \ \ & {\text{on}}\,\,[0,T], \\
 \varphi(x,0)=\varphi(x,T) &\text{on}\,\,(0,1).
 \end{array}
 \right.
 \end{equation}
The main result in this section can be stated as follows.

\begin{prop}\label{appendixprop}
Let $\lambda(\mathcal{L})$ denote the principal eigenvalue of \eqref{A.3}.
If \eqref{appendix} admits some strict positive super-solution defined in Definition {\rm \ref{appendixldef}}, then
$\lambda(\mathcal{L})\geq0$. Moreover, if \eqref{appendix} admits some strict  nonnegative sub-solution  defined in Definition {\rm \ref{appendixldef}}, then $\lambda(\mathcal{L})\leq0$.
\end{prop}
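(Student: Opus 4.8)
\textbf{Proof proposal for Proposition \ref{appendixprop}.}

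The plan is to argue by contradiction using the positive eigenfunction of \eqref{A.3} together with Lemma \ref{appendixlem}. First I would recall that by \cite[Proposition 7.2]{Hess} the principal eigenvalue $\lambda(\mathcal{L})$ of \eqref{A.3} is real and simple, with an eigenfunction $\varphi_1>0$ on $[0,1]\times[0,T]$ (smooth, $T$-periodic, satisfying the Neumann conditions). Suppose \eqref{appendix} admits a strict positive super-solution $\overline{\varphi}$ in the sense of Definition \ref{appendixldef}, and suppose for contradiction that $\lambda(\mathcal{L})<0$. The idea is that $\overline{\varphi}$ then becomes, after a suitable normalization, a super-solution that sits ``below'' a multiple of $\varphi_1$ and touches it, forcing a contradiction via the maximum-principle-type statement of Lemma \ref{appendixlem}. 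Concretely, since $\overline{\varphi}>0$ (by Lemma \ref{appendixlem}, as $\overline{\varphi}$ is a strict super-solution it cannot be $\equiv 0$) and both $\overline{\varphi}$, $\varphi_1$ are bounded away from zero and bounded above on the compact set $[0,1]\times[0,T]$, the quantity $s^* := \sup\{s>0: \overline{\varphi}\geq s\,\varphi_1 \text{ on } [0,1]\times[0,T]\}$ is a well-defined finite positive number, and $\overline{\varphi}\geq s^*\varphi_1$ with equality attained at some point.

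Next I would examine $w := \overline{\varphi} - s^*\varphi_1$. This $w$ is again a super-solution of a related problem: in $((0,1)\times(0,T))\setminus(\mathbb{X}\cup\mathbb{T})$ we have $\mathcal{L}w = \mathcal{L}\overline{\varphi} - s^*\mathcal{L}\varphi_1 \geq -s^*\lambda(\mathcal{L})\varphi_1 > 0$ (here using $\lambda(\mathcal{L})<0$ and $\varphi_1>0$), so in fact $(\mathcal{L} - \lambda(\mathcal L))w \ge 0$ — but more simply $w$ satisfies $\mathcal{L}w\ge 0$ since $-s^*\lambda(\mathcal L)\varphi_1\ge0$; the boundary inequalities $\partial_x w(0,t)\le 0$, $\partial_x w(1,t)\ge 0$ hold because $\varphi_1$ satisfies Neumann conditions and $\overline{\varphi}$ satisfies the super-solution boundary inequalities; the periodicity inequality $w(x,0)\ge w(x,T)$ holds because $\varphi_1$ is periodic and $\overline{\varphi}(x,0)\ge\overline{\varphi}(x,T)$; and the jump conditions (2), (3) across $\mathbb{X}$, $\mathbb{T}$ are inherited unchanged since $s^*\varphi_1$ is smooth. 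Thus $w\ge 0$ is a super-solution of \eqref{appendix} in the sense of Definition \ref{appendixldef} with the \emph{same} singular sets $\mathbb{X},\mathbb{T}$. By the definition of $s^*$, $w$ vanishes somewhere, so Lemma \ref{appendixlem} forces $w\equiv 0$, i.e. $\overline{\varphi}\equiv s^*\varphi_1$. But then $\mathcal{L}\overline{\varphi} = s^*\mathcal{L}\varphi_1 = s^*\lambda(\mathcal{L})\varphi_1 < 0$ in the interior, contradicting $\mathcal{L}\overline{\varphi}\ge 0$ from \eqref{def}. Hence $\lambda(\mathcal{L})\ge 0$. The sub-solution statement follows by applying what was just proved to $-\underline{\varphi}$, which is a strict positive super-solution of the problem with operator $\mathcal{L}$ — more precisely, one checks that the sign reversal turns the sub-solution inequalities into super-solution inequalities for the same $\mathcal{L}$ up to replacing $\underline\varphi$ by $-\underline\varphi$, and the principal eigenvalue is unchanged; alternatively, run the mirror-image argument with $\inf\{s: \underline{\varphi}\le s\varphi_1\}$ and conclude $\lambda(\mathcal{L})\le 0$.

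The step I expect to be the main obstacle is verifying rigorously that $w=\overline{\varphi}-s^*\varphi_1$ is genuinely an admissible super-solution in the technical sense of Definition \ref{appendixldef} — in particular that subtracting the smooth $s^*\varphi_1$ does not disturb the one-sided derivative jump conditions (2) and (3) and does not create new singularities, and that the regularity requirement (1) is preserved. Since $\varphi_1\in C^2$ and is $T$-periodic with Neumann data, each of these is routine, but it must be checked carefully because the whole argument hinges on being allowed to invoke Lemma \ref{appendixlem} for $w$. A secondary subtlety is the case $\lambda(\mathcal L)=0$ in the sub-solution part versus strictness: because $\overline\varphi$ (resp.\ $\underline\varphi$) is assumed \emph{strict}, the degenerate equality case $\overline\varphi\equiv s^*\varphi_1$ is excluded precisely as above, so no boundary-case difficulty arises; one only needs to make sure the strictness hypothesis is actually used at that point.
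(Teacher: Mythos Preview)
Your approach is the standard Walter-type touching argument and is essentially what the paper has in mind: the paper itself omits the details, citing Proposition~2.1 and Corollary~2.1 of \cite{PZ2015}, and your proof is a faithful reconstruction of that method, adapted to the generalized super/sub-solution framework of Definition~\ref{appendixldef} via Lemma~\ref{appendixlem}. Your verification that $w=\overline\varphi-s^*\varphi_1$ inherits the full structure of Definition~\ref{appendixldef} (regularity, jump conditions across $\mathbb{X}$ and $\mathbb{T}$, boundary inequalities, periodicity) is indeed the crucial point, and your treatment there is correct.

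One correction is needed in the sub-solution paragraph. You write that $-\underline\varphi$ is ``a strict positive super-solution,'' but since $\underline\varphi\ge 0$ one has $-\underline\varphi\le 0$, so the first half of the proof cannot be applied to it. Your ``alternatively'' clause is the right fix and should simply replace that sentence: assume $\lambda(\mathcal L)>0$, set $s^*:=\inf\{s>0:\underline\varphi\le s\varphi_1\}$ (finite and positive because $\underline\varphi$ is upper semi-continuous on the compact set, $\underline\varphi\not\equiv 0$ by strictness, and $\varphi_1>0$), and check that $v:=s^*\varphi_1-\underline\varphi\ge 0$ is a super-solution in the sense of Definition~\ref{appendixldef} with the same singular sets (here $\mathcal Lv\ge s^*\lambda(\mathcal L)\varphi_1>0$, and the smoothness of $\varphi_1$ leaves the jump conditions of $-\underline\varphi$ intact). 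Since $v$ vanishes somewhere, Lemma~\ref{appendixlem} forces $v\equiv 0$, hence $\underline\varphi=s^*\varphi_1$ and $\mathcal L\underline\varphi=s^*\lambda(\mathcal L)\varphi_1>0$, contradicting $\mathcal L\underline\varphi\le 0$. Drop the incorrect reduction to $-\underline\varphi$ and keep this mirror argument.
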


Thanks to Lemma \ref{appendixlem}, Proposition \ref{appendixprop} can be proved by
 the same arguments as in Proposition 2.1 and Corollary 2.1  of \cite{PZ2015}, where the equivalent relationship of $\lambda(\mathcal{L})>0$ and a positive strict (classical) super-solution, is established. Such an idea  can be traced back to Walter \cite{W1989} for the elliptic operators under zero Dirichlet boundary conditions. We omit the details
and refer interested readers to \cite{PZ2015}.

\section{Nondegenerate advection: proof of Theorem \ref{ldnthm1}}\label{Nondegenerate}
This section is devoted to the proof of Theorem \ref{ldnthm1}
which concerns the case that
all critical points of advection function $m$ are nondegenerate.
The results for a typical example will be proved first, and then
Theorem \ref{ldnthm1} follows from a similar argument. 
From now on, we shall use
$\mathcal{L}_{\alpha}$ to denote the following time-periodic parabolic operator:
$$\mathcal{L}_{\alpha}:=\partial_{t}-\partial_{xx}-
\alpha \partial_xm\cdot\partial_x+V,\ \ (\alpha\geq0).$$

Our first result concerns the following special case:

\begin{prop}\label{ldnlem1}
Suppose that there exist $T$-periodic functions $\kappa_i\in C^1(\mathbb{R})$ ($i=1,2,3$) such that
$0\leq \kappa_1(t)<\kappa_2(t)<\kappa_3(t)\leq 1$ for all $t\in[0,T]$ and 
 \begin{equation}\label{liu011}
    \begin{cases}
\partial_x m(x,t)>0,\ \ &  x
\in\left(0,\kappa_1(t)\right)\cup\left(\kappa_2(t),\kappa_3(t)\right),\,\,\, t\in [0,T],\\
\partial_x m(x,t)=0,\ \ &  x=\kappa_1(t),\,\kappa_2(t),\,\kappa_3(t), \,\,\, t\in [0,T],\\
\partial_x m(x,t)<0,\ \ &  x
\in \left(\kappa_1(t), \kappa_2(t)\right)\cup\left(\kappa_3(t), 1\right), \,\,\, t\in[0,T]. 
\end{cases}
 \end{equation}
Let $\lambda(\alpha)$ be the principal eigenvalue of \eqref{advection1}. Then
 $$
 \lim_{\alpha\rightarrow\infty}\lambda(\alpha)
 =\min\left\{\frac{1}{T}\int_0^T V\left(\kappa_1(s),s\right)\mathrm{d}s,\ \
 \frac{1}{T}\int_0^T V\left(\kappa_3(s),s\right)\mathrm{d}s\right\}.
 $$
\end{prop}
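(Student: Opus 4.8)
The plan is to establish the two inequalities
\[
\limsup_{\alpha\to\infty}\lambda(\alpha)\le \min\left\{\tfrac1T\!\int_0^T\! V(\kappa_1(s),s)\,\mathrm{d}s,\ \tfrac1T\!\int_0^T\! V(\kappa_3(s),s)\,\mathrm{d}s\right\}
\]
and the matching lower bound for $\liminf_{\alpha\to\infty}\lambda(\alpha)$, using Proposition~\ref{appendixprop}: to prove $\lambda(\alpha)\le c$ for large $\alpha$ one exhibits a nonnegative strict sub-solution of $(\mathcal{L}_\alpha - c)\varphi = 0$, and to prove $\lambda(\alpha)\ge c$ one exhibits a positive strict super-solution. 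The geometry here is that $m(\cdot,t)$ has exactly two spatial local maxima, at $x=\kappa_1(t)$ and $x=\kappa_3(t)$ (with a local minimum at $\kappa_2(t)$), and the heuristic is that as $\alpha\to\infty$ the advection drives the eigenfunction to concentrate along whichever of the two maximum-curves gives the smaller time-averaged value of $V$; the limit is therefore that smaller average.

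\textbf{Upper bound.} Let $c_1 = \tfrac1T\int_0^T V(\kappa_1(s),s)\,\mathrm{d}s$ and similarly $c_3$; set $c_* = \min\{c_1,c_3\}$ and pick $i_*\in\{1,3\}$ achieving it. For any $\varepsilon>0$ I would construct a sub-solution of $(\mathcal L_\alpha - (c_*+\varepsilon))\varphi = 0$ supported near the curve $x=\kappa_{i_*}(t)$. The natural ansatz is a separated-variables form $\underline\varphi(x,t) = \rho(t)\,\eta_\alpha(x - \kappa_{i_*}(t))$ where $\eta_\alpha$ is a narrow bump (e.g. a rescaled Gaussian-type profile of width $\sim\alpha^{-1/2}$, or more robustly a piecewise function that is $C^2$ away from finitely many curves, matching Definition~\ref{appendixldef}) and $\rho(t)>0$ solves the scalar periodic ODE $\rho' + (V(\kappa_{i_*}(t),t) - c_*)\rho = $ (small error). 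Because $\partial_x m$ changes sign appropriately across $\kappa_{i_*}$ — it is a genuine maximum of $m$ — the advection term $-\alpha\,\partial_x m\,\partial_x\underline\varphi$ has the favorable sign and dominates the $\partial_{xx}$ term on the bulk of the bump for $\alpha$ large, so the differential inequality $\mathcal L_\alpha \underline\varphi \le (c_*+\varepsilon)\underline\varphi$ holds; the Neumann boundary inequalities are automatic once the bump is supported in the interior (or, if $\kappa_{i_*}$ touches the boundary, one uses the one-sided structure and the nondegeneracy clause for boundary points). One must also verify the corner conditions (2)–(3) of Definition~\ref{appendixldef} at the edges of the bump's support, which is where the $C^1$-regularity of $\kappa_{i_*}$ and careful choice of profile enter. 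Letting $\varepsilon\to0$ gives the upper bound.

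\textbf{Lower bound and the main obstacle.} For the lower bound I would fix $\varepsilon>0$ and build a \emph{global} positive super-solution of $(\mathcal L_\alpha - (c_*-\varepsilon))\varphi=0$ on all of $[0,1]\times[0,T]$. This is the harder half. The idea is to patch together three pieces: near each maximum-curve $\kappa_1$ and $\kappa_3$ use a profile like $\rho_j(t)\,g_j(x)$ with $\rho_j$ solving the relevant scalar ODE with constant $c_*-\varepsilon$ and $g_j$ chosen so that $-\alpha\,\partial_x m\,g_j' \ge 0$ (possible because away from the maxima $\partial_x m$ has a fixed sign pointing toward the maximum, so one wants $g_j$ increasing toward $\kappa_j$); across the intermediate minimum curve $\kappa_2$ and near the boundary one glues these together, exploiting that at a local minimum of $m$ the advection pushes mass \emph{away}, so a V-shaped (kinked) profile with the downward corner at $\kappa_2$ is a legitimate super-solution in the sense of Definition~\ref{appendixldef} (condition (2): $\partial_x\overline\varphi(x^+,t)<\partial_x\overline\varphi(x^-,t)$ fails at a minimum — so actually the kink must be oriented so that it \emph{is} an upward corner, meaning the gluing curve is placed at $\kappa_2$ with the two increasing-toward-maxima pieces meeting there). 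The crux is to make the matching consistent: the time-periodicity forces the $\rho_j$ to be the genuine periodic principal solutions, whose Floquet/averaging analysis produces exactly $c_j = \tfrac1T\int_0^T V(\kappa_j(s),s)\,\mathrm{d}s$, and one needs $c_*-\varepsilon$ to be strictly below \emph{both} $c_1$ and $c_3$ so that both local super-solutions decay appropriately and the glued function is a strict super-solution without sign problems. The principal technical difficulty I anticipate is controlling the error terms uniformly in $t$ when the curves $\kappa_i(t)$ move (the change of variables $y = x-\kappa_i(t)$ introduces a $-\kappa_i'(t)\partial_y$ drift that must be absorbed, using again that $\alpha|\partial_x m|$ is large and $\partial_{xx}m(\kappa_i,t)\ne 0$ by nondegeneracy, giving quadratic growth of $|\partial_x m|$ away from $\kappa_i$), and in handling the boundary cases where a maximum-curve may coincide with $x=0$ or $x=1$ for part of the period. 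Once the global super-solution is in hand, Proposition~\ref{appendixprop} gives $\lambda(\alpha)\ge c_*-\varepsilon$ for all large $\alpha$, and $\varepsilon\to0$ closes the argument.
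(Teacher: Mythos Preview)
Your overall architecture matches the paper's: upper bound via a nonnegative sub-solution localized near the maximum-curve achieving $c_*$, lower bound via a global positive super-solution, both fed into Proposition~\ref{appendixprop}. The upper bound sketch is essentially correct (though the paper uses a fixed-width bump rather than an $\alpha^{-1/2}$-scaled one; no $\alpha$-dependence is needed in the profile).

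The lower bound, however, contains a sign error that would make the construction fail as written. You correctly state the requirement $-\alpha\,\partial_x m\,g_j'\ge 0$, but then draw the wrong conclusion: since $\partial_x m>0$ to the left of a maximum and $\partial_x m<0$ to the right, the inequality forces $g_j'\le 0$ on the left and $g_j'\ge 0$ on the right, i.e.\ $g_j$ must \emph{decrease} toward $\kappa_j$ and have a local \emph{minimum} there --- not ``increase toward $\kappa_j$''. Dually, near the spatial minimum $\kappa_2$ the super-solution should have a local \emph{maximum} (a cap), and no kink is needed at $\kappa_2$ at all. The paper implements exactly this: near $\kappa_1$ and $\kappa_3$ it takes $\overline\varphi=(M_j+(x-\kappa_j(t))^2)\,f_j(t)$ with $f_j$ the periodic solution of $f_j'+V(\kappa_j(t),t)f_j=c_j f_j$; near $\kappa_2$ it takes the cap $M_2\bigl[1-\tfrac{(x-\kappa_2(t))^2}{2\delta^2}\bigr]$; and on the four remaining strips, where $|\partial_x m|\ge\rho>0$, it uses monotone $C^2$ connectors of the correct slope so that $-\alpha\,\partial_x m\,\partial_x\overline\varphi$ is large positive for $\alpha$ large. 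The kinks (condition~(2) of Definition~\ref{appendixldef}) sit at the boundaries $\partial\Delta_{i\delta}$ of the three tubes, not at $\kappa_2$ itself. With the signs corrected your plan becomes exactly the paper's proof; the extra drift $-\kappa_i'(t)\partial_y$ you worry about is absorbed simply by taking the additive constants $M_j$ large (no nondegeneracy of $\partial_{xx}m$ is used here --- only compactness, which gives $|\partial_x m|\ge\rho$ outside the $\delta$-tubes).
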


\begin{proof} Without loss of generality,  assume that
\begin{equation}\label{a-a}
 \int_0^T V\left(\kappa_1(s),s\right)\mathrm{d}s
 \leq\int_0^T V\left(\kappa_3(s),s\right)\mathrm{d}s.
\end{equation}
It suffices to show
 $\lambda(\alpha)\to\frac{1}{T}\int_0^T V\left(\kappa_1(s),s\right)\mathrm{d}s
 $ as $\alpha\rightarrow\infty$.
We divide the proof into two steps.

 \smallskip
\noindent {\bf Step 1. } We first prove
\begin{equation}\label{ldn1}
 \liminf_{\alpha\to\infty}\lambda(\alpha)
 \geq\frac{1}{T}\int_0^T V\left(\kappa_1(s),s\right)\mathrm{d}s.
\end{equation}
We shall construct a  positive super-solution
$\overline \varphi$ in the sense of Definition \ref{appendixldef}. 
More precisely, for any given small constant $\epsilon>0$,
we devote ourselves to identifying the curve set $\mathbb{X}$
there and constructing a continuous function $\overline \varphi>0$  such that
 \begin{equation} \label{ldn2}
 \left\{\begin{array}{ll}
 \medskip
 \mathcal{L}_\alpha\overline{\varphi}\geq\left[{1\over T}\int_0^T V\left(\kappa_1(s),s\right)\mathrm{d}s-\epsilon\right]\overline{\varphi}
 &\text {in}\,\,\left((0,1)\times[0,T]\right)\setminus\mathbb{X},\\
 \medskip
 \partial_x\overline{\varphi}(0,t)\leq0, \ \
 \partial_x\overline{\varphi}(1,t)\geq0  &\text {on}\,\,[0,T], \\
 \medskip
 \overline{\varphi}(x,0)=\overline{\varphi}(x,T) &\text{on}\,\,(0,1),\\
 \partial_{x}\overline{\varphi}(x^+,t)<  \partial_{x}\overline{\varphi}(x^-,t) &\text{in}\,\,\mathbb{X},
 \end{array}
 \right.
\end{equation}
provided that $\alpha>0$ is sufficiently large. Once such a super-solution $\overline \varphi$ exists,
a direct application of Proposition \ref{appendixprop}  to the operator
 $\mathcal{L}:=\mathcal{L}_\alpha-\left[{1\over T}\int_0^T V\left(\kappa_1(s),s\right)\mathrm{d}s-\epsilon\right]$
yields \eqref{ldn1}. 


To this end, we introduce some constant $\delta$ with $0<\delta\ll1$ such that
\begin{equation}\label{liu009}
|V(x,t)-V\left(\kappa_i(t),t\right)|<\epsilon/2\ \ \text{ on } \Delta_{i\delta}:=\{(x,t)\in[0,1]\times[0,T]: |x-\kappa_i(t)|<\delta\},\,\, i=1,2,3.
\end{equation}
We construct  $\overline{\varphi}$ on different  regions of $[0,1]\times[0,T]$.

\smallskip
$\mathbf{(1)}$ For $(x,t)\in \Delta_{1\delta}$, 
we define 
\begin{equation}\label{liu010}
    \overline{\varphi}(x,t):=\overline{z}_{1}(x,t)f_{1}(t),
\end{equation}
where $f_{1}$ is a $T$-periodic function given by
 \begin{equation}\label{ldn7}
 \begin{array}{ll}
 \displaystyle
 f_{1}(t)=\exp\left[{-\int_0^t V\left(\kappa_1(s), s\right)\mathrm{d}s+{t\over T}\int_0^T V\left(\kappa_1(s), s\right)\mathrm{d}s}\right].
 \end{array}
 \end{equation}
To verify the first inequality in \eqref{ldn2},  by \eqref{liu010} we need to find $\overline{z}_{1}>0$ such that 
 \begin{equation} \label{ldn3}\begin{array}{ll}
\partial_{t}\overline{z}_{1}-\partial_{xx}\overline{z}_{1}-\alpha \partial_x m\partial_x\overline{z}_{1}+\left[V-V\left(\kappa_1,t\right)
 +\epsilon\right]\overline{z}_{1}> 0 \quad \text{ on } \,\Delta_{1\delta}.
 \end{array}
 \end{equation}
Indeed, set
 $$\overline{z}_{1}(x,t):=M_{1}+\left(x-\kappa_1(t)\right)^2,$$
 where $M_1$ is chosen such that
 $\epsilon M_1>4+4\delta \| \kappa_1'\|_\infty$.  
By our assumption \eqref{liu011}, it is noted that 
 $$
 -\alpha \partial_x m \cdot \partial_x \overline{z}_{1}=-2\alpha \left(x-\kappa_1(t)\right)\partial_x m\geq0 \quad \text{ on } \,\Delta_{1\delta},
 $$
 and by \eqref{liu009}, $V(x,t)-V\left(\kappa_1,t\right)
 +\epsilon\geq \epsilon/2$.
Hence, by the choice of $M_{1}$ we can verify \eqref{ldn3}, 
so that the chosen $\overline{\varphi}$ in \eqref{liu010} satisfies
 \eqref{ldn2} on $\Delta_{1\delta}$ for all $\alpha>0$.

\smallskip
$\mathbf{(2)}$ For $(x,t)\in \Delta_{3\delta}$ 
as above,  we construct
 $$\overline{\varphi}(x,t):=\overline{z}_{3}(x,t)f_{3}(t),
 $$
with $T$-periodic function $f_{3}$ given by
 \begin{equation*}
 \begin{array}{ll}
 \displaystyle
 f_{3}(t)=\exp\left[{-\int_0^tV\left(\kappa_3(s), s\right)\mathrm{d}s+{t\over T}\int_0^T V\left(\kappa_3(s), s\right)\mathrm{d}s}\right].
 \end{array}
\end{equation*}
Set $\overline{z}_{3}(x,t):=M_{3}+\left(x-\kappa_3(t)\right)^2$.
It can be checked that for large $M_{3}$ (independent of $\alpha>0$),
 \begin{equation*}\begin{array}{ll}
\partial_{t}\overline{z}_{3} -\partial_{xx}\overline{z}_{3}-\alpha \partial_x m\partial_{x}\overline{z}_{3}+\left[V-V\left(\kappa_3,t\right)
 +\epsilon\right]\overline{z}_{3}>0 \quad \text{ on } \,\Delta_{3\delta},
 \end{array}
 \end{equation*}
which together with \eqref{a-a} implies that the constructed $\overline{\varphi}$
satisfies \eqref{ldn3} on $\Delta_{3\delta}$ for any $\alpha>0$. 

\smallskip

$\mathbf{(3)}$
For $(x,t)\in \Delta_{2\delta}$, 
we  construct $\overline \varphi$  in the form of
 \begin{equation*}
\overline \varphi(x,t):=M_{2}\left[1-\frac{\left(x-\kappa_2(t)\right)^2}{2\delta^2}\right],
  \end{equation*}
where we may choose $\delta>0$ further small if necessarily such that 
\begin{equation}\label{liu004}
\delta \|\kappa'_2\|_\infty+\delta ^2
\left\|V-{1\over T}\int_0^T V\left(\kappa_1(s),s\right)\mathrm{d}s+\epsilon\right\|_\infty<1.
  \end{equation}
  and the constant  $M_{2}$ is chosen to satisfy
 \begin{equation*}
 M_{2}>2\max 
 \Big\{\|\overline{\varphi}\left(\kappa_1(\cdot)+\delta, \cdot\right)\|_\infty,\
 \|\overline{\varphi}\left(\kappa_3(\cdot)-\delta, \cdot\right)\|_\infty\Big\}
  \end{equation*}
with $\overline{\varphi}\left(\kappa_1+\delta, \cdot\right)$ and $\overline{\varphi}\left(\kappa_3-\delta,\cdot\right)$ being defined in $\mathbf{(1)}$  and $\mathbf{(2)}$. Thus for each $t\in[0,T]$, we have
\begin{equation}\label{condition}
\begin{array}{l}
\overline \varphi(\kappa_2(t)-\delta, t)>\overline{\varphi}\left(\kappa_1(t)+\delta,t\right)\quad \text{and }\quad\overline \varphi(\kappa_2(t)+\delta,t)>\overline{\varphi}(\kappa_3(t)-\delta,t).
 \end{array}
  \end{equation}
In view of $-\partial_xm\partial_x\overline{\varphi}\geq 0$ on $\Delta_{2\delta}$, direct calculation yields that for any $\alpha>0$,
\begin{equation*}
 \begin{array}{l}
\smallskip
 \quad \displaystyle\mathcal{L}_\alpha\overline{\varphi}-\left[\frac{1}{T}\int_0^T V\left(\kappa_1(s),s\right)\mathrm{d}s-\epsilon\right]\overline{\varphi}\\
 \smallskip
 =\displaystyle \frac{M_{2}}{\delta^2}(x-\kappa_2)\kappa'_2+\frac{M_{2}}{\delta^2}-\alpha\partial_xm\partial_x\overline{\varphi}+\left[V-{1\over T}\int_0^T V\left(\kappa_1(s),s\right)\mathrm{d}s+\epsilon\right]\overline{\varphi}\\
 \smallskip
  \geq  \displaystyle
  \frac{M_{2}}{\delta^2}-\frac{M_{2}}{\delta}\|\kappa'_2\|_\infty-M_{2}\left\|V-{1\over T}\int_0^T
  V\left(\kappa_1(s),s\right)\mathrm{d}s+\epsilon\right\|_\infty\\
  >0 \qquad \text{on }\, \Delta_{2\delta}, 
 \end{array}
\end{equation*}
where the last inequality is due to the choice of $\delta$ in \eqref{liu004}.
\smallskip

$\mathbf{(4)}$ For $(x,t)\in ((0,1)\times[0,T])\setminus (\Delta_{1\delta}\cup\Delta_{2\delta}\cup\Delta_{3\delta})$,
there is a constant $\rho>0$ independent of $(x,t)$ such that $|\partial_x m|>\rho$ and we shall construct the super-solution $\overline \varphi$ on this region by monotonically connecting the endpoints on $\partial\Delta_{i\delta}$ ($i=1,2,3$). Taking $\left\{(x,t): x\in [\kappa_1(t)+\delta,\kappa_2(t)-\delta\right],\, t\in[0,T]\}$ as an example, by \eqref{condition}, we may construct $\overline{\varphi}$ to be a positive $T$-periodic continuous function such that
$\partial_x\overline{\varphi}>0$ in this region
 and 
\begin{equation}\label{liu012}
\partial_x\overline{\varphi}((\partial \Delta_{i\delta})^+,t)
<\partial_x\overline{\varphi}((\partial \Delta_{i\delta})^-,t), \quad i=1,2.
\end{equation}
Due to $-\alpha\partial_x m\partial_x\overline{\varphi}>\alpha \rho \partial_x\overline{\varphi}$,
we can verify  that  such a function $\overline{\varphi}$ verifies
$\mathcal{L}\overline{\varphi}>0$ 
by choosing $\alpha>0$ large.
Similar constructions can be applied  to the other remaining regions. 

Now we have constructed the desired strict super-solution $\overline{\varphi}$
satisfying \eqref{ldn2} with 
 $
 \mathbb{X}= \partial\Delta_{1\delta}\cup\partial\Delta_{2\delta}\cup\partial\Delta_{3\delta} 
 $
The profile of 
$\overline{\varphi}$ defined in $\mathbf{(1)}$-$\mathbf{(4)}$ can be illustrated in Fig. \ref{figure1}. Therefore, the assertion \eqref{ldn1}  is a direct consequence of Proposition \ref{appendixprop}. 
\begin{figure}[http!!]
  \centering
\includegraphics[height=1.9in]{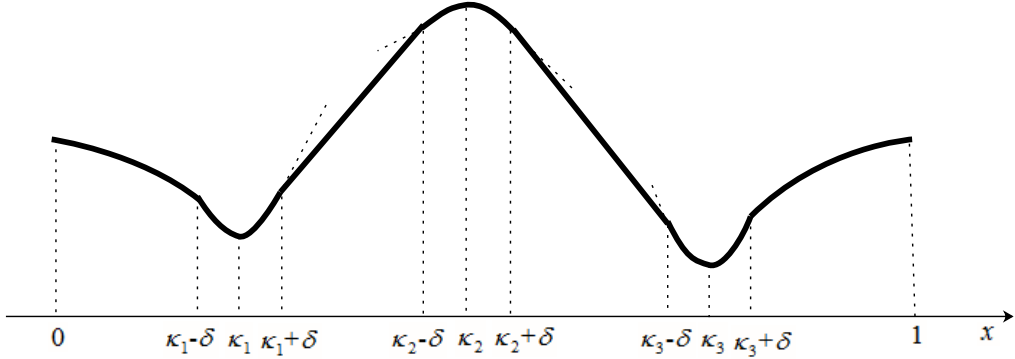}
  \caption{\small  Profile of the constructed $\overline{\varphi}$ for fixed $t$.}\label{figure1}
  \end{figure}

\smallskip
 \noindent {\bf Step 2.} We next prove
 \begin{equation}\label{ldn4}
 \limsup_{\alpha\to\infty}\lambda(\alpha)\leq\frac{1}{T}\int_0^T V\left(\kappa_1(s),s\right)\mathrm{d}s.
\end{equation}

We  employ a similar strategy as in Step 1. Given any small $\epsilon>0$, we shall construct a strict
sub-solution $\underline \varphi\geq0$ that satisfies for large $\alpha>0$,
\begin{equation}\label{liu013}
 \left\{\begin{array}{ll}
  \medskip
\mathcal{L}_\alpha\underline{\varphi} \leq\left[{1\over T}\int_0^T V\left(\kappa_1(s),s\right)\mathrm{d}s+\epsilon\right]\underline{\varphi} 
&\text{in}\,\,\left((0,1)\times [0,T]\right)\setminus\mathbb{X},\\
 \medskip
 \partial_{x}\underline{\varphi}(0,t)\geq0, \ \ \partial_{x}\underline{\varphi}(1,t)\leq0, &\text{on}\,\, [0,T], \\
 \underline{\varphi}(x,0)=\underline{\varphi}(x,T) &\text{on}\,\,(0,1), \end{array}
 \right.
\end{equation}
where $\mathbb{X}$ will be determined below so that
 $\partial_x\underline{\varphi}(x^+,t)>\partial_x\underline{\varphi}(x^-,t)$ for all $(x,t)\in\mathbb{X}$.
Once such a sub-solution $\underline{\varphi}$ exists, by Proposition \ref{appendixprop}  one can obtain \eqref{ldn4}.
For the sake of clarity, we assume $0<\kappa_1(t)<\kappa_2(t)<\kappa_3(t)< 1$ for all $t\in[0,T]$, as it is easily seen that
the following construction also work for other cases.

To this end, we  proceed as in Step 1 to find $\underline{z}\geq0$ such that
\begin{equation}\label{ldn6.1}
 \left\{\begin{array}{ll}
 \medskip
 \partial_{t}\underline{z}-\partial_{xx}\underline{z}-\alpha \partial_x m\partial_{x}\underline{z}+\left[V-V\left(\kappa_1(t),t\right)
 -\epsilon\right]\underline{z}\leq0,\,\not\equiv0\ &\text{ in }((0,1)\times[0,T])\setminus\mathbb{X},\\
 \displaystyle
 \partial_x \underline z(0,t)\geq0, \ \ \partial_x\underline z_x(1,t)\leq0  &\text{ on } [0,T],
 \end{array}
 \right.
 \end{equation}
so that
 $\underline\varphi(x,t):=\underline z(x,t)f_{1}(t)$
is a sub-solution satisfying \eqref{liu013}, where $f_{1}$ is given by \eqref{ldn7}.
For this purpose, we define $\underline z\in C([0,1]\times[0,T])$ by
\begin{equation*}
 \underline{z}(x,t):=\begin{cases}
 \smallskip
1+\frac{\epsilon\delta^2}{16}-\frac{\epsilon}{4}\left(x-\kappa_1(t)\right)^2, &x\in\left[\kappa_1(t)-\frac{\delta}{2},\kappa_1(t)+\frac{\delta}{2}\right], t\in [0,T],\\
 \smallskip
\underline{z}_1(x,t), &x\in\left(\kappa_1(t)-\delta,\kappa_1(t)-\frac{\delta}{2}\right), t\in[0,T],\\
 \smallskip
\underline{z}_2(x,t), &x\in\left(\kappa_1(t)+\frac{\delta}{2},\kappa_1(t)+\delta\right), t\in[0,T],\\
0,  & x\notin\left(\kappa_1(t)-\delta,\kappa_1(t)+\delta\right),
 \end{cases}
\end{equation*}
where $\delta>0$ is defined in Step 1 such that \eqref{liu009} holds.
For any $x\in\left[\kappa_1(t)-\frac{\delta}{2},\kappa_1(t)+\frac{\delta}{2}\right]$ and $t\in[0,T]$, noting from \eqref{liu011} that $\partial_x m\partial_{x}\underline{z}\geq0$, by choosing $\delta$ further small if necessarily we can verify  \eqref{ldn6.1} holds.
For 
any $t\in[0,T]$, we then choose $\underline{z}_1$ and $\underline{z}_2$ to fulfill the following properties:
\begin{equation*}
 \begin{cases}
 \smallskip
\partial_{x}\underline{z}_{1}(x,t)>0,\ \ \ \ \ \ x\in\left(\kappa_1(t)-\delta,\kappa_1(t)-\frac{\delta}{2}\right),\\ 
\smallskip
\underline{z}_1\left(\kappa_1(t)-\delta,t\right)=0,\ \ \ \underline{z}_1\left(\kappa_1(t)-\frac{\delta}{2},t\right)=1,\\ 
 \smallskip
 \partial_x\underline{z}_{1}\left(\left(\kappa_1-\delta/2,t\right)^-\right)
 <\frac{\epsilon\delta}{4}=\partial_x\underline{z}_{1}\left(\left(\kappa_1-\delta/2,t\right)^+\right),
 \end{cases}
 \end{equation*}
and
\begin{equation*}
 \begin{cases}
  \smallskip
\partial_{x}\underline{z}_{2}(x,t)<0,\ \ \ \ \ \ x\in\left(\kappa_1(t)+\frac{\delta}{2},\kappa_1(t)+\delta\right), \\ 
\smallskip
\underline{z}_2\left(\kappa_1(t)+\delta,t\right)=0,\ \ \ \underline{z}_2\left(\kappa_1(t)+\frac{\delta}{2},t \right)=1, \\ 
\smallskip
\partial_x\underline{z}_{1}\left(\left(\kappa_1+\delta/2\right)^+, t\right)>-\frac{\epsilon\delta}{4}=\partial_x\underline{z}_{1}\left(\left(\kappa_1+\delta/2\right)^-, t\right).
\end{cases}
\end{equation*}
 Note that  $|\partial_x m|>\rho>0$ 
 for some $\rho>0$ independent of $(x,t)$.
 Similar to $\mathbf{(4)}$ in Step 1, it can be verified readily that  $\underline z$ satisfies \eqref{ldn6.1} with
\begin{equation*}
    \begin{array}{l}
      \mathbb{X}=\Big\{(\kappa_1(t)\pm\delta,t),\ (\kappa_1(t)\pm\frac{\delta}{2},t): \, t\in[0,T] \Big\}.
     \end{array}
\end{equation*}
Thus our analysis above implies \eqref{ldn4}.

As a consequence, Proposition \ref{ldnlem1} follows from \eqref{ldn1} and \eqref{ldn4}.
\end{proof}

\begin{remark}\label{rem2.1}
{\rm {\rm (1)} Let $\kappa_1\equiv0$ and $\kappa_3\equiv1$ in \eqref{liu011}, i.e.  $m$ satisfies
\begin{equation*}
     \begin{cases}
\partial_x m(x,t)<0,\ \ &  x\in\left(0,\kappa_2(t)\right),\, t\in[0,T],\\
\partial_x m(\kappa_2(t),t)=0,\ \ &  t\in [0,T],\\
\partial_x m(x,t)>0,\ \ &  x\in\left(\kappa_2(t),1\right),\,  t\in[0,T]
\end{cases}
 \end{equation*}
 with $\kappa_2\in C^1([0,T])$ satisfying $0<\kappa_2(t)<1$ for all $t\in[0,T]$.
Then Proposition {\rm\ref{ldnlem1}} yields
$\lambda(\alpha) \to \min\left\{\frac{1}{T}\int_0^T V\left(0,s\right)\mathrm{d}s,\ \
 \frac{1}{T}\int_0^T V\left(1,s\right)\mathrm{d}s\right\}$ as $\alpha\rightarrow\infty$.

{\rm (2)} Let $\kappa_2$ and $\kappa_3$ in \eqref{liu011} 
tend to $1$,
  i.e. $m$ satisfies
 \begin{equation}\label{liu007}
     \begin{cases}
\partial_x m(x,t)>0,\ \ &  x\in\left(0,\kappa_1(t)\right),\, t\in[0,T],\\
\partial_x m(\kappa_1(t),t)=0,\ \ &  t\in [0,T],\\
\partial_x m(x,t)<0,\ \ &  x\in\left(\kappa_1(t),1\right),\,  t\in[0,T]
\end{cases}
 \end{equation}
with $\kappa_1\in C^1([0,T])$ satisfying $0\leq\kappa_1(t)\leq 1$ for $t\in[0,T]$. A illustrated example for such curve $\kappa_1$  can be shown in Fig. {\rm \ref{figure3}}, where the case $\{t\in[0,T]: \kappa(t)=0 \text{ or } \kappa(t)=1\}\neq \emptyset$ is allowed.
  By a similar but simpler argument as in Proposition {\rm \ref{ldnlem1}}, we can deduce
$\lambda(\alpha)\to\frac{1}{T}\int_0^T V\left(\kappa_1(s),s\right)\mathrm{d}s$ as $\alpha\to\infty$. In particular, if $\kappa_1\equiv0$ in \eqref{liu007}, i.e.  $\partial_x m<0$ in
$\left(0,1\right)\times[0,T]$, we can conclude that
 $\lambda(\alpha)\to \frac{1}{T}\int_0^T V\left(0,s\right)\mathrm{d}s$  as $\alpha\to\infty$, which was first proved in  \cite[Theorem {\rm 1.1}]{PZ2015}.
 }
  \begin{figure}[http!!]
  \centering
\includegraphics[height=2.1in]{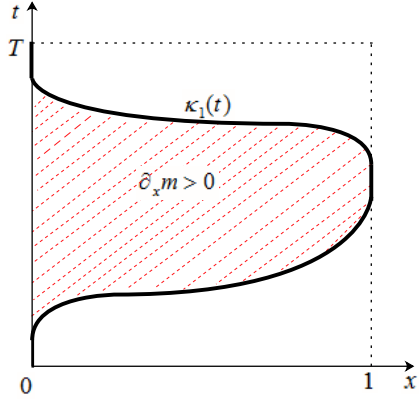}
  \caption{\small  The profile of 
  $\kappa_1$ in \eqref{liu007}, as illustrated by the black solid curve. The  red shaded region denotes the region where $\partial_x m>0$, while $\partial_x m<0$ in the  white shaded region.}\label{figure3}
  \end{figure}
\end{remark}

We are now in a position to prove Theorem \ref{ldnthm1}.
\begin{proof}[Proof of  Theorem {\rm \ref{ldnthm1}}]
Since  all spatially critical points of $m$ are nondegenerate, by the implicit function theorem, it holds that  $\kappa_i(t)\neq\kappa_j(t)$ for all $t\in[0,T]$ and $i\neq j$.
Assume $0\leq\kappa_1(t)<\ldots<\kappa_N(t)\leq1$. 
Then  we can adopt directly the arguments developed in the proof of
Propositions \ref{ldnlem1}
to  find a  nonnegative sub-solution and a positive  strict super-solution for problem \eqref{advection1}.  Finally,  we may apply Proposition \ref{appendixprop} to  conclude Theorem \ref{ldnthm1}.
\end{proof}

\section{Spatially degenerate advection: Proof of Theorem \ref{ldnthm2}}\label{Spatially_degenerate}
In this section, we are concerned with the case when the advection $m$ is spatially degenerate and prove  Theorem \ref{ldnthm2}. In particular, the set of local maximum points of $m$ allows some flat surfaces with respect to the spatial variable.
The results in some typical cases will be presented and proved first, and Theorem \ref{ldnthm2} then follows
by the similar arguments. Recall that
$\lambda^{pq}\big((\underline{\kappa},\overline{\kappa})\big)$ for $p,q\in\{\mathcal{N},\mathcal{D}\}$
denotes the principal eigenvalue of problem \eqref{definition}.

\begin{prop}\label{ldnlem2}
Suppose that there exist $T$-periodic functions $\kappa_1,\kappa_2\in C^1(\mathbb{R})$ such that $0\leq\kappa_1(t)<\kappa_2(t)\leq1$ for all $t\in[0,T]$ and
$$
 \begin{cases}
\partial_x m(x,t)>0,\ \ &  x
\in\left(0,\kappa_1(t)\right),\,\, t\in[0,T],\\
\partial_x m(x,t)=0,\ \ &  x
\in [\kappa_1(t),\,\kappa_2(t)], \,\,t\in[0,T],\\
\partial_x m(x,t)<0,\ \ &  x
\in\left(\kappa_2(t),1\right), \,\,\, t\in[0,T]. 
\end{cases}
$$
Let $\lambda(\alpha)$ be the principal eigenvalue of \eqref{advection1}. Then
$$\lim_{\alpha\rightarrow\infty}\lambda(\alpha)=\lambda^{\mathcal{N}\mathcal{N}}\big((\kappa_1,\kappa_2)\big).$$
\end{prop}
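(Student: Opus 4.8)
The plan is to follow the same two-step scheme (lower bound on $\liminf$, upper bound on $\limsup$) used in Proposition~\ref{ldnlem1}, but now the ``plateau'' $[\kappa_1(t),\kappa_2(t)]$ where $\partial_x m\equiv 0$ carries genuine parabolic dynamics in the limit, so the local building block there is not a simple quadratic but an eigenfunction of the limiting Neumann problem \eqref{definition} with $p=q=\mathcal{N}$. Write $\lambda_\infty:=\lambda^{\mathcal{N}\mathcal{N}}\big((\kappa_1,\kappa_2)\big)$ and let $\psi>0$ be a corresponding principal eigenfunction of \eqref{definition} on the moving domain $(\kappa_1(t),\kappa_2(t))$.

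\textbf{Step 1 ($\liminf_{\alpha\to\infty}\lambda(\alpha)\ge\lambda_\infty-\epsilon$).} For fixed small $\epsilon>0$ I would construct a positive strict super-solution $\overline\varphi$ of $\mathcal{L}_\alpha-(\lambda_\infty-\epsilon)$ in the sense of Definition~\ref{appendixldef}, and invoke Proposition~\ref{appendixprop}. On a $\delta$-neighborhood of the plateau, roughly $x\in(\kappa_1(t)-\delta,\kappa_2(t)+\delta)$, take $\overline\varphi$ close to a slightly perturbed, slightly fattened version of $\psi$ extended by near-constant profiles so that $\mathcal{L}_\alpha\overline\varphi\ge(\lambda_\infty-\epsilon)\overline\varphi$; the point is that on the plateau the advection term vanishes identically, so the only error is from the $O(\delta)$ domain mismatch and the time-derivative of $\kappa_1,\kappa_2$, which can be absorbed by choosing $\delta$ small and perturbing $\psi$. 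On the two monotone regions $(0,\kappa_1(t))$ (where $\partial_xm>0$) and $(\kappa_2(t),1)$ (where $\partial_xm<0$) I would take $\overline\varphi$ to be a positive $T$-periodic function that is \emph{increasing toward the plateau} — increasing in $x$ on $(0,\kappa_1)$, decreasing in $x$ on $(\kappa_2,1)$ — and arrange corner inequalities $\partial_x\overline\varphi(x^+,t)<\partial_x\overline\varphi(x^-,t)$ at the gluing curves, exactly as in step $\mathbf{(4)}$ of Proposition~\ref{ldnlem1}. Because $-\alpha\,\partial_xm\,\partial_x\overline\varphi\ge \alpha\rho|\partial_x\overline\varphi|$ on those regions for some $\rho>0$, taking $\alpha$ large makes $\mathcal{L}_\alpha\overline\varphi>0$ there. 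The matching at the plateau edges needs the value of $\psi$ at the boundary to exceed (after rescaling $M_2$-style constants) the values of the monotone pieces — this is handled by a scaling constant as in \eqref{condition}.

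\textbf{Step 2 ($\limsup_{\alpha\to\infty}\lambda(\alpha)\le\lambda_\infty+\epsilon$).} Dually, build a nonnegative strict sub-solution $\underline\varphi$ of $\mathcal{L}_\alpha-(\lambda_\infty+\epsilon)$. Here $\underline\varphi$ should be supported essentially on (a slight shrinking of) the plateau: take $\underline\varphi=\psi_\epsilon$ on $[\kappa_1(t)+\eta,\kappa_2(t)-\eta]$ where $\psi_\epsilon$ is a principal eigenfunction of the Neumann problem on the slightly smaller moving strip (whose eigenvalue converges to $\lambda_\infty$ as $\eta\to0$), then taper $\underline\varphi$ down to $0$ on narrow collars inside the monotone regions, arranging the reverse corner inequalities $\partial_x\underline\varphi(x^+,t)>\partial_x\underline\varphi(x^-,t)$ at the gluing curves. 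On the monotone collars, since $\partial_xm$ is sign-definite, one chooses the sign of $\partial_x\underline\varphi$ so that $-\alpha\,\partial_xm\,\partial_x\underline\varphi\le 0$ and the sub-solution inequality $\mathcal{L}_\alpha\underline\varphi\le(\lambda_\infty+\epsilon)\underline\varphi$ holds for all large $\alpha$ (mirroring the $\underline z_1,\underline z_2$ construction in Proposition~\ref{ldnlem1}, Step~2). Again apply Proposition~\ref{appendixprop}. Letting $\epsilon\to0$ in the two bounds yields $\lim_{\alpha\to\infty}\lambda(\alpha)=\lambda_\infty$.

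\textbf{Main obstacle.} The delicate point, compared with the nondegenerate case, is the construction on the plateau: one must work with the genuine limiting eigenfunction $\psi$ on a time-dependent domain $(\kappa_1(t),\kappa_2(t))$, perturb it (fattening/shrinking the strip, shifting the eigenvalue by $\epsilon$) while keeping it strictly positive and matching it continuously — with the correct corner inequality — to the monotone pieces. One must also confirm that $\lambda^{\mathcal{N}\mathcal{N}}$ on the slightly enlarged/shrunk moving strips converges to $\lambda^{\mathcal{N}\mathcal{N}}\big((\kappa_1,\kappa_2)\big)$; this continuity-in-domain statement, together with the regularity of $\psi$ needed to form a legitimate super/sub-solution, is where the real work lies, whereas the monotone-region gluing is essentially identical to Proposition~\ref{ldnlem1}.
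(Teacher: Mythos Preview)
There is a genuine sign error in your Step~1 that breaks the construction. On $(0,\kappa_1(t))$ you have $\partial_x m>0$; if $\overline\varphi$ is \emph{increasing} in $x$ there (as you propose), then $-\alpha\,\partial_x m\,\partial_x\overline\varphi<0$, so your claimed inequality $-\alpha\,\partial_xm\,\partial_x\overline\varphi\ge \alpha\rho|\partial_x\overline\varphi|$ is false and the super-solution inequality $\mathcal{L}_\alpha\overline\varphi\ge(\lambda_\infty-\epsilon)\overline\varphi$ fails for large $\alpha$. The same mistake occurs on $(\kappa_2(t),1)$. The super-solution must be \emph{valley}-shaped on the monotone flanks --- decreasing in $x$ on $(0,\kappa_1)$ and increasing on $(\kappa_2,1)$ --- so that the advection term is large positive. (Your sub-solution in Step~2 happens to have the correct, opposite, orientation.)

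Once the orientation is fixed, you face the real obstacle: with the pure Neumann eigenfunction $\psi$ on the plateau one has $\partial_x\psi(\kappa_1,t)=\partial_x\psi(\kappa_2,t)=0$, and the corner inequality $\partial_x\overline\varphi(\kappa_1^+,t)<\partial_x\overline\varphi(\kappa_1^-,t)$ then forces the left flank to have \emph{positive} slope at $\kappa_1^-$, contradicting the required monotonicity. Your proposed fix (fatten/shrink the moving strip) does not resolve this, since the enlarged Neumann eigenfunction still has zero normal derivative at its boundary. The paper's remedy is different and cleaner: replace the Neumann conditions by the Robin conditions $\partial_x\psi(\kappa_1,t)=\eta\psi(\kappa_1,t)$, $\partial_x\psi(\kappa_2,t)=-\eta\psi(\kappa_2,t)$, with principal eigenvalue $\lambda^{\mathcal{N}}_\eta\to\lambda^{\mathcal{N}\mathcal{N}}\big((\kappa_1,\kappa_2)\big)$ as $\eta\to0$. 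Taking $\eta<0$ for the super-solution gives $\partial_x\psi^{\mathcal{N}}_\eta(\kappa_1,t)<0$ and $\partial_x\psi^{\mathcal{N}}_\eta(\kappa_2,t)>0$, which is exactly the tilt needed to satisfy the corner inequality against a valley-shaped extension; taking $\eta>0$ handles the sub-solution symmetrically. The outer gluing is then done via an explicit quadratic collar, as in Proposition~\ref{ldnlem1}. This Robin trick, not a domain perturbation, is the missing ingredient in your outline.
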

The advection $m$ given in  Proposition \ref{ldnlem2} can be illustrated in Fig. \ref{figure3_1}.
  \begin{figure}[http!!]
  \centering
\includegraphics[height=2.0in]{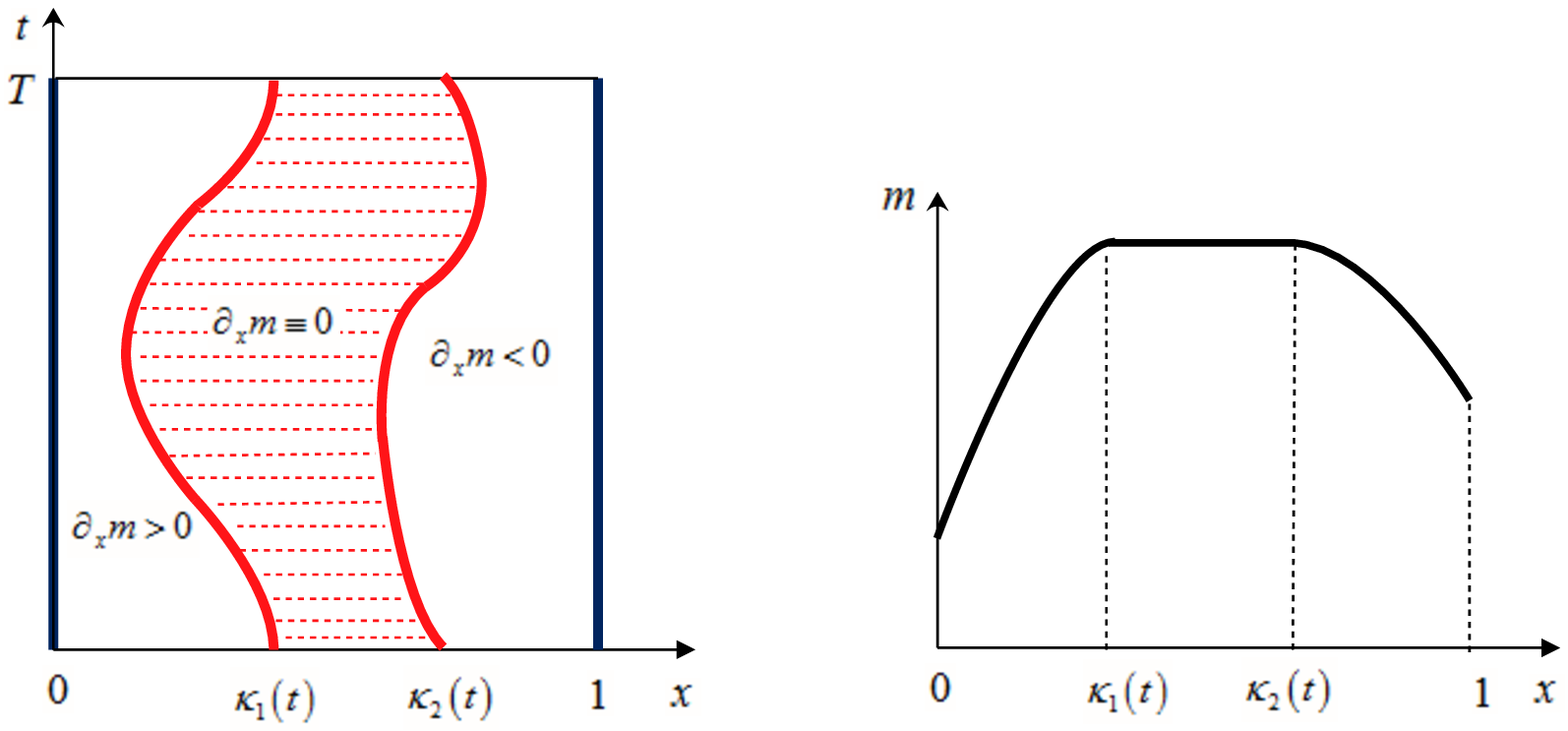}
  \caption{\small The profile of function $m$ given in Proposition \ref{ldnlem2}. In the left side picture, the red
colored curves correspond to functions $\kappa_1,\kappa_2$ and the  red shaded region denotes the region where
$\partial_x m=0$. The black colored curve in the right side picture represents the graph of $m$ for fixed $t\in[0,T]$.}\label{figure3_1}
  \end{figure}

\begin{proof}[Proof of  Proposition {\rm \ref{ldnlem2}}] Given $\eta\in\mathbb{R}$, our analysis begins with the
following auxiliary  problem:
\begin{equation}\label{liu001}
 \left\{\begin{array}{ll}
 \medskip
 \partial_{t}\psi-\partial_{xx}\psi+V(x,t)\psi=\lambda\psi,\ \ & x
\in (\kappa_1(t),\,\kappa_2(t)), \,\,t\in[0,T],\\
 \medskip
 \partial_{x}\psi\left(\kappa_1,t\right)=\eta\psi\left(\kappa_1,t\right), \partial_{x}\psi\left(\kappa_2,t\right)=-\eta\psi\left(\kappa_2,t\right), \ \ & t\in [0,T], \\
 \psi(x,0)=\psi(x,T), &x\in \left[\kappa_1(0),\kappa_2(0)\right].
 \end{array}
 \right.
 \end{equation}
Denote by $\lambda^{\mathcal{N}}_\eta$ its principal eigenvalue and $\psi^{\mathcal{N}}_\eta>0$ the corresponding principal eigenfunction. It is well known that $\lambda^{\mathcal{N}}_\eta$ is increasing and analytical with respect to $\eta\in\mathbb{R}$. Clearly,
\begin{equation}\label{liu002}
\lim_{\eta\rightarrow0}\lambda^{\mathcal{N}}_\eta=\lambda^{\mathcal{N}\mathcal{N}}\big((\kappa_1,\kappa_2)\big),
\end{equation}
and when $\eta<0$, it holds that $\partial_{x}\psi^{\mathcal{N}}_\eta\left(\kappa_1(t),t\right)<0$ and $\partial_{x}\psi^{\mathcal{N}}_\eta\left(\kappa_2(t),t\right)>0$ for all $t\in[0,T]$.

\smallskip

\noindent {\bf Step 1. } We first prove
 \begin{equation}\label{ldn5}
 \liminf_{\alpha\to\infty}\lambda(\alpha)\geq\lambda^{\mathcal{N}\mathcal{N}}\big((\kappa_1,\kappa_2)\big).
\end{equation}
Inspired from the proof of Proposition \ref{ldnlem1}, we 
shall construct a positive  super-solution $\overline \varphi$, i.e. for any given constant $\eta<0$,
we aim to find the curve set $\mathbb{X}$ and $\overline \varphi>0$  such that
 \begin{equation} \label{ldn6}
 \left\{\begin{array}{ll}
  \medskip
 \mathcal{L}_{\alpha}\overline{\varphi}\geq\lambda^{\mathcal{N}}_\eta\overline{\varphi} &{\text{in}}\,\,\left((0,1)\times[0,T]\right)\setminus\mathbb{X},\\
 \medskip
 \partial_x\overline{\varphi}(0,t)\leq0,\ \  \partial_x\overline{\varphi}(1,t)\geq0  & {\text{on}}\,\,[0,T], \\
 \medskip
 \overline{\varphi}(x,0)=\overline{\varphi}(x,T) &{\text{on}}\,\,(0,1),\\
 \partial_{x}\overline{\varphi}(x^+,t)<  \partial_{x}\overline{\varphi}(x^-,t) &\text{in}\,\,\mathbb{X}
 \end{array}
 \right.
\end{equation}
 for sufficiently large $\alpha>0$. Then \eqref{ldn5} follows from Proposition \ref{appendixprop}. For this purpose, we first consider the case $0<\kappa_1<\kappa_2<1$ and  construct $\overline \varphi$ on the following different regions.

\smallskip

 $\mathbf{(1)}$ For $ x \in \left[\kappa_1(t),\kappa_2(t)\right]$ and $t\in[0,T]$, we define $\overline \varphi(x,t):=\psi^{\mathcal{N}}_\eta(x,t)$ with $\eta<0$. In view of $\partial_x m(x,t)=0$, by the definition of $\psi^{\mathcal{N}}_\eta$ in \eqref{liu001}, one can check that such a function $\overline \varphi$ satisfies $\mathcal{L}_{\alpha}\overline{\varphi}\geq\lambda^{\mathcal{N}}_\eta\overline{\varphi}$ on this region for all $\alpha>0$.

\smallskip

 $\mathbf{(2)}$ For $x\in\left[\kappa_1(t)-\delta,\kappa_1(t)\right]$ and $t\in[0,T]$ with small $\delta>0$ to be determined later, we define
 \begin{equation*}
\begin{array}{l}
 \overline \varphi(x,t):=\psi^{\mathcal{N}}_\eta\left(\kappa_1(t),t\right)\left[1-\left(x-\kappa_1(t)\right)r-\left(x-\kappa_1(t)\right)^2R\right],
 \end{array}
\end{equation*}
where  the constants $r, R>0$ will be specified below. Due to $\eta<0$, there holds $\partial_x\psi^{\mathcal{N}}_{\eta}\left(\kappa_1(t),t\right)<0$ for all $ t\in[0,T]$. Fix  $r>0$  small such that
\begin{equation*}
\begin{array}{l}
\partial_x\overline{\varphi}(\kappa_1^-,t)
=-r\psi^{\mathcal{N}}_\eta\left(\kappa_1,t\right)
>\partial_x\psi^{\mathcal{N}}_{\eta}\left(\kappa_1,t\right)
=\partial_x\overline{\varphi}(\kappa_1^+,t),\ \,\forall t\in[0,T].
 \end{array}
\end{equation*}
 Direct calculation gives
\begin{equation*}
 \begin{array}{ll}
 \medskip
 \mathcal{L}_{\alpha}\overline{\varphi}-\lambda^{\mathcal{N}}_\eta\overline{\varphi}
  =&\hspace{-7pt}\frac{{\rm d}\psi^{\mathcal{N}}_{\eta}\left(\kappa_1,t\right)}{{\rm d} t}\left[1-\left(x-\kappa_1\right)r-\left(x-\kappa_1\right)^2R\right]+ \psi^{\mathcal{N}}_{\eta}\left(\kappa_1,t\right)\left[r \kappa'_1+2R\left(x-\kappa_1\right)\kappa'_1\right]\\
 \medskip
  &\hspace{-7pt}-\alpha \partial_x m\cdot\psi^{\mathcal{N}}_{\eta}\left(\kappa_1,t\right)\left[-r-2\left(x-\kappa_1\right)R\right]+2R\psi^{\mathcal{N}}_{\eta}\left(\kappa_1,t\right)+V(x,t)\overline{\varphi}
  -\lambda^{\mathcal{N}}_\eta\overline{\varphi}.
\end{array}
\end{equation*}
Set $R:=\frac{r}{2\delta}$. Since $\partial_x m(x,t)>0$ for
$x\in\left[\kappa_1(t)-\delta,\kappa_1(t)\right)$,  it holds that
\begin{equation*}
 \begin{array}{l}
- \partial_xm\cdot \psi^{\mathcal{N}}_{\eta}\left(\kappa_1,t\right)\left[-r
-2\left(x-\kappa_1\right)R\right]\geq0, \ \,\forall t\in[0,T],
\end{array}
\end{equation*}
whence we calculate that
\begin{equation*}
 \begin{array}{ll}
 \medskip
 \displaystyle
  \mathcal{L}_{\alpha}\overline{\varphi}-\lambda^{\mathcal{N}}_\eta\overline{\varphi}
  \geq&
  \displaystyle
  \hspace{-8pt}-\left|\frac{{\rm d}\psi^{\mathcal{N}}_{\eta}\left(\kappa_1,t\right)}{{\rm d} t}\right|
  \left(1+\frac{r\delta}{2}\right)-\frac{3r}{2}\psi^{\mathcal{N}}_{\eta}\left(\kappa_1,t\right)|\kappa'|+2R\psi^{\mathcal{N}}_{\eta}\left(\kappa_1,t\right)\\
  &
  \displaystyle
  \hspace{-8pt}-\left|V(x,t)-\lambda^{\mathcal{N}}_\eta\right|
  \psi^{\mathcal{N}}_{\eta}\left(\kappa_1,t\right)\left(1+\frac{r\delta}{2}\right).
\end{array}
\end{equation*}
We can 
choose  $\delta$ small  and thus $R$ large to obtain $ \mathcal{L}_{\alpha}\overline{\varphi}\geq\lambda^{\mathcal{N}}_\eta\overline{\varphi}$ for all $\alpha>0$. 

\smallskip

 $\mathbf{(3)}$  For $x\in\left[0,\kappa_1(t)-\delta\right]$ and $t\in[0,T]$ with $\delta>0$ defined in $\mathbf{(2)}$, one may construct  $\overline{\varphi}>0$ to be $T$-periodic and satisfy for each $t\in[0,T]$,
$$\begin{cases}
\smallskip
\overline{\varphi}\left(\kappa_1(t)-\delta, t\right)
=\psi^{\mathcal{N}}_\eta\left(\kappa_1(t),t\right)\left[1+r\delta- R \delta^2\right], \\ 
\smallskip
\partial_x\overline{\varphi}(x,t)<0, \quad x\in\left[0,\kappa_1(t)-\delta\right], \\
\partial_x\overline{\varphi}(\left(\kappa_1(t)-\delta\right)^+,t)
<\partial_x\overline{\varphi}(\left(\kappa_1(t)-\delta\right)^-,t), \\
\end{cases}$$
where the first equality is to ensure the continuity of $\overline{\varphi}$.
 Noting that $|\partial_x m|>\rho$ for all $x\in \left[0,\kappa_1(t)-\delta\right]$ with some constant $\rho>0$ independent of $(x,t)$, we may deduce
$\mathcal{L}_{\alpha}\overline{\varphi}\geq\lambda^{\mathcal{N}}_\eta\overline{\varphi}$  by choosing $\alpha>0$ large.

\smallskip

 $\mathbf{(4)}$  For $x\in\left[\kappa_2(t),1\right]$ and $ t\in[0,T]$, similar to  $\mathbf{(2)}$  and  $\mathbf{(3)}$, we can define $\overline \varphi$ as follow:
\begin{equation*}
\begin{cases}
\smallskip
\overline \varphi(x,t)=\psi^{\mathcal{N}}_\eta\left(\kappa_2,t\right)\left[1+\tilde{r}\left(x-\kappa_2\right)
+\tilde{R}\left(x-\kappa_2\right)^2\right], \,\,&x\in(\kappa_2(t),\kappa_2(t)+\delta), t\in[0,T],\\
\smallskip
\partial_x\overline{\varphi}(x,t)>0, \,\,&x\in\left[\kappa_2(t)+\delta,1\right], t\in[0,T],\\
\partial_x\overline{\varphi}(\left(\kappa_2+\delta\right)^+,t)<\partial_x\overline{\varphi}(\left(\kappa_2+\delta\right)^-, t),\
&t\in[0,T].
 \end{cases}
\end{equation*}
Proceeding as in  $\mathbf{(2)}$  and  $\mathbf{(3)}$, we can  verify, by choosing small positive constants $\delta, \tilde{r}$ and large $\tilde{R}$, that $\mathcal{L}_{\alpha}\overline{\varphi}\geq\lambda^{\mathcal{N}}_\eta\overline{\varphi}$ for large $\alpha>0$.

By summarizing the above arguments, for the case $0<\kappa_1<\kappa_2<1$, we have constructed a strict super-solution $\overline \varphi>0$ satisfying  \eqref{ldn6} 
with
\begin{equation}\label{liu014}
 \mathbb{X}=\Big\{(\kappa_1(t)-\delta,t), \, (\kappa_1(t),t), \, (\kappa_2(t),t), \, (\kappa_2(t)+\delta,t): \, t\in[0,T]\Big\}.
\end{equation}

For the remaining case $\{t\in[0,T]: \kappa_1(t)=0\}\cup \{t\in[0,T]: \kappa_2(t)=1\}\neq\emptyset$, it can be verified that the above construction also work by noting that $\partial_{x}\psi^{\mathcal{N}}_{\eta}\left(\kappa_1,t\right)<0$ and  $\partial_{x}\psi^{\mathcal{N}}_{\eta}\left(\kappa_2,t\right)>0$ for $\eta<0$.
Therefore, applying Proposition \ref{appendixprop}, we conclude that for any $\eta<0$,
$$\liminf_{\alpha\to\infty}\lambda(\alpha)\geq\lambda^{\mathcal{N}}_\eta,$$
which together with \eqref{liu002} gives the desired \eqref{ldn5} by letting $\eta \searrow 0$.
This completes Step 1.

\smallskip

\noindent {\bf Step 2. } We prove
 \begin{equation}\label{ldn8}
 \limsup_{\alpha\to\infty}\lambda(\alpha)\leq\lambda^{\mathcal{N}\mathcal{N}}\big((\kappa_1,\kappa_2)\big).
\end{equation}
For each $\eta>0$, it suffices to find a strict sub-solution $\underline\varphi\geq 0$ such that for large $\alpha>0$,
\begin{equation} \label{ldn9}
 \left\{\begin{array}{ll}
  \medskip
 \mathcal{L}_\alpha\underline{\varphi}\leq\lambda^{\mathcal{N}}_\eta\underline{\varphi} &{\text{in}}\,\,\left((0,1)\times[0,T]\right)\setminus\mathbb{X},\\
 \medskip
 \partial_{x}\underline{\varphi}(0,t)\geq0,\ \ \partial_{x}\underline{\varphi}(1,t)\leq0\  &{\text{on}}\,\,[0,T], \\
  \medskip
 \underline{\varphi}(x,0)=\underline{\varphi}(x,T) &{\text{on}}\,\,(0,1),
 \end{array}
 \right.
\end{equation}
and  $\partial_x\underline{\varphi}(x^+,t)>\partial_x\underline{\varphi}(x^-,t)$ for all $(x,t)\in\mathbb{X}$, where $\mathbb{X}$ is defined by \eqref{liu014}.

To this end,  set $\underline{\varphi}(x,t):=\psi^{\mathcal{N}}_\eta(x,t)$  for $x\in\left[\kappa_1(t),\kappa_2(t)\right]$ and $t\in[0,T]$, where $\psi^{\mathcal{N}}_\eta>0$ denotes the principal eigenfunction of \eqref{liu001} with $\eta>0$. Since $\partial_x m(x,t)=0$, it follows that
 $ \mathcal{L}_\alpha\underline{\varphi}\leq\lambda^{\mathcal{N}}_\eta\underline{\varphi}$ 
 obviously. Observe that for any $\eta>0$, 
 \begin{equation*}
\begin{array}{l}
\partial_x\psi^{\mathcal{N}}_{\eta}\left(\kappa_1,t\right)>0\quad
\mathrm{and} \quad \partial_x\psi^{\mathcal{N}}_{\eta}\left(\kappa_2,t\right)<0,\ \ \forall t\in[0,T].
\end{array}
\end{equation*}
Such  sub-solution $\underline\varphi$ satisfying \eqref{ldn9} can be constructed by the similar arguments as in Step 1. Then we may apply Proposition \ref{appendixprop} to deduce
$$\liminf_{\alpha\to\infty}\lambda(\alpha)\leq\lambda^{\mathcal{N}}_\eta \quad \text{for all }\eta>0.
$$
for which sending $\eta\searrow 0$ implies \eqref{ldn8} by using \eqref{liu002}.

A combination of \eqref{ldn5} and \eqref{ldn8} yields Proposition \ref{ldnlem2}, and the proof is complete.
\end{proof}

\begin{remark}\label{ldnrem3.2}
{\rm When the strip $(\kappa_1(t),\kappa_2(t))$ in Proposition {\rm\ref{ldnlem2}} shrinks to one curve $\kappa\in C^1(\mathbb{R})$ as illustrated in Fig. {\rm\ref{figure4}}, we can assert that  Proposition {\rm \ref{ldnlem2}} coincides with 
the result in Remark {\rm\ref{rem2.1}{\rm (2)}} by the following observation:
\begin{equation}\label{ldnaux}
    \lim_{\delta\rightarrow0^+}\lambda^{\mathcal{N}\mathcal{N}}\big((\kappa-\delta,\kappa+\delta)\big)=\frac{1}{T}\int_0^T V\left(\kappa(s),s\right)\mathrm{d}s.
\end{equation}
 The proof of \eqref{ldnaux} is given in Appendix {\rm \ref{appenC}} for the sake of completeness.
}

\begin{figure}[http!!]
  \centering
\includegraphics[height=1.5in]{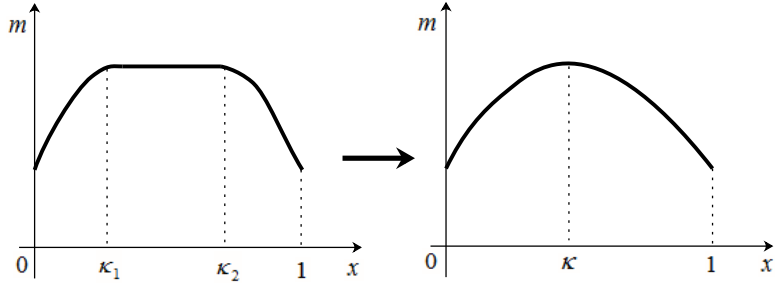}
  \caption{\small  The profile of advection $m$ in Proposition {\rm \ref{ldnlem2}} for fixed $t\in[0,T]$ as the two curve $\kappa_1$ and $\kappa_2$  shrink to $\kappa\in C(\mathbb{R})$. }\label{figure4}
  \end{figure}
\end{remark}

\begin{prop}\label{ldnlem3}
Suppose that there exist $T$-periodic functions $\kappa_1,\kappa_2\in C^1(\mathbb{R})$ such that $0\leq\kappa_1(t)<\kappa_2(t)\leq1$ for all $t\in[0,T]$ and
 $$\begin{cases}
\partial_x m(x,t)<0,\ \ & x \in\left(0,\kappa_1(t)\right), \, t\in[0,T],\\
\partial_x m(x,t)=0,& x \in\left[\kappa_1(t),\kappa_2(t)\right], \, t\in[0,T],\\
\partial_x m(x,t)>0,& x\in\left(\kappa_2(t),1\right), \, t\in[0,T].
\end{cases}$$
 \begin{itemize}
    \item [{\text(i)}] If $0<\kappa_1(t)<\kappa_2(t)<1$ for all $t\in[0,T]$,  then
 $$
 \lim_{\alpha\rightarrow\infty}\lambda(\alpha)
 =\min\left\{\lambda^{\mathcal{D}\mathcal{D}}\big((\kappa_1,\kappa_2)\big),\ \,\frac{1}{T}
 \int_0^T V\left(0,s\right)\mathrm{d}s,\ \ \frac{1}{T}\int_0^T V\left(1,s\right)\mathrm{d}s\right\};
 $$

    \item [{\text(ii)}] If $\kappa_1\equiv0$ and $0<\kappa_2(t)<1$ for all $t\in[0,T]$, then
 $$
  \lim_{\alpha\rightarrow\infty}\lambda(\alpha)
  =\min\left\{\lambda^{\mathcal{N}\mathcal{D}}\big((0,\kappa_2)\big),\ \
  \frac{1}{T}\int_0^T V\left(1,s\right)\mathrm{d}s\right\};
  $$

  \item [{\text(iii)}] If  $\kappa_2\equiv1$  and $0<\kappa_1(t)<1$ for all $t\in[0,T]$, then
   $$
   \lim_{\alpha\rightarrow\infty}\lambda(\alpha)
   =\min\left\{\lambda^{\mathcal{D}\mathcal{N}}\big((\kappa_1,1)\big),\ \
   \frac{1}{T}\int_0^T V\left(0,s\right)\mathrm{d}s\right\}.
   $$
 \end{itemize}
\end{prop}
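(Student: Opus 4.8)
\textbf{Proof proposal for Proposition \ref{ldnlem3}.}

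The plan is to follow the two-step sub/super-solution scheme of Propositions \ref{ldnlem1} and \ref{ldnlem2}: for each candidate limiting value we construct a positive strict super-solution forcing the $\liminf$ bound from below, and a nonnegative strict sub-solution forcing the $\limsup$ bound from above, then invoke Proposition \ref{appendixprop}. The new feature compared with Proposition \ref{ldnlem2} is that $m$ now has a spatial \emph{local minimum} plateau on $[\kappa_1(t),\kappa_2(t)]$ rather than a local maximum, so the sign of $-\alpha\partial_x m\,\partial_x\overline\varphi$ in the transport term flips. Consequently, on the degenerate strip the comparison is with the \emph{Dirichlet} principal eigenvalue $\lambda^{\mathcal D\mathcal D}\big((\kappa_1,\kappa_2)\big)$ in case (i) (the eigenfunction must be pushed toward vanishing at $\kappa_1,\kappa_2$), while at the two outer endpoints $x=0$ and $x=1$, where $m$ is spatially monotone with $\partial_x m<0$ near $0$ and $\partial_x m>0$ near $1$, these become the relevant local maxima of $m$ and contribute the terms $\frac1T\int_0^T V(0,s)\,\mathrm ds$ and $\frac1T\int_0^T V(1,s)\,\mathrm ds$, exactly as in Remark \ref{rem2.1}. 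Cases (ii) and (iii) are the degenerations $\kappa_1\equiv0$ and $\kappa_2\equiv1$, which replace one Dirichlet endpoint of the strip by a Neumann endpoint (since at $x=0$ or $x=1$ the Neumann boundary condition of \eqref{advection1} is in force), yielding $\lambda^{\mathcal N\mathcal D}$ and $\lambda^{\mathcal D\mathcal N}$ respectively, and drop the corresponding outer boundary contribution.

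For the lower bound in case (i), fix $\epsilon>0$ and let $\lambda_* = \min\{\lambda^{\mathcal D\mathcal D}((\kappa_1,\kappa_2)),\ \frac1T\int_0^T V(0,s)\,\mathrm ds,\ \frac1T\int_0^T V(1,s)\,\mathrm ds\}$. I would build $\overline\varphi>0$ piecewise: on $[\kappa_1(t),\kappa_2(t)]$ take $\overline\varphi = \psi$, where $\psi>0$ solves the problem \eqref{definition} with $p=q=\mathcal D$ on a slightly enlarged strip $(\kappa_1-\sigma,\kappa_2+\sigma)$ (or equivalently the Robin-perturbed eigenvalue $\lambda^{\mathcal D}_\eta$ with large $\eta>0$ so that $\partial_x\psi(\kappa_1,t)>0$, $\partial_x\psi(\kappa_2,t)<0$, and $\lambda^{\mathcal D}_\eta\uparrow\lambda^{\mathcal D\mathcal D}$ as $\eta\to\infty$ — this is the natural analogue of \eqref{liu001}–\eqref{liu002}); near $x=0$ and $x=1$ take $\overline\varphi = \overline z_0(x,t)f_0(t)$ and $\overline z_1(x,t)f_1(t)$ with the quadratic bumps $\overline z_0 = M_0 + x^2$, $\overline z_1 = M_1 + (x-1)^2$ and the $T$-periodic normalizing factors $f_j$ tuned to $V(j,\cdot)$ exactly as in \eqref{ldn7}; and on the two intermediate monotone regions connect these with $\partial_x m$-monotone pieces as in step $\mathbf{(4)}$ of Proposition \ref{ldnlem1}, so that the transport term $-\alpha\partial_x m\,\partial_x\overline\varphi$ becomes a large positive multiple of $\alpha$. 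The corner inequalities $\partial_x\overline\varphi(x^+,t)<\partial_x\overline\varphi(x^-,t)$ at the junctions must be arranged; crucially, because on the strip $m$ has a minimum, moving off $[\kappa_1,\kappa_2]$ into $\partial_x m<0$ (left) or $\partial_x m>0$ (right) the sign works out correctly for a \emph{Dirichlet}-type matching on the strip side, i.e. $\partial_x\psi(\kappa_1^+,t)>0>\partial_x\psi(\kappa_1^-,t)$ after a suitable choice of the connecting piece. The upper bound is dual: construct $\underline\varphi\ge0$ supported near whichever of the three regions realizes the minimum, using $\psi^{\mathcal D}_\eta$ with $\eta<0$ on the strip (so the eigenfunction slightly overshoots, with the outward-decreasing sign at $\kappa_1,\kappa_2$) or the truncated quadratic bumps near $0$, $1$ as in Step 2 of Proposition \ref{ldnlem1}, again with the reversed corner inequalities, and apply Proposition \ref{appendixprop} in the sub-solution form. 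Sending $\eta\to0$ and $\epsilon\to0$ gives the result.

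Cases (ii) and (iii) require only cosmetic modifications: when $\kappa_1\equiv0$, the left end of the strip coincides with the physical boundary $x=0$, where \eqref{advection1} already imposes $\partial_x\varphi(0,t)=0$, so no degenerate-region correction is needed there and no $\frac1T\int_0^T V(0,s)\,\mathrm ds$ term appears; the strip eigenvalue becomes $\lambda^{\mathcal N\mathcal D}((0,\kappa_2))$, with Neumann at $x=0$ and Dirichlet at $x=\kappa_2$. Symmetrically for $\kappa_2\equiv1$. One should also note, as in Proposition \ref{ldnlem2}, that the constructions remain valid when $\kappa_1(t)$ touches $0$ or $\kappa_2(t)$ touches $1$ on a proper subset of $[0,T]$; the key point is the strict sign of $\partial_x\psi^{\mathcal D}_\eta$ at the strip endpoints for $\eta\ne0$. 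The main obstacle I anticipate is precisely the bookkeeping of corner/junction inequalities on the two monotone connecting regions: one must verify that the piecewise-$C^2$ function $\overline\varphi$ can be joined with the correct jump in $\partial_x\overline\varphi$ at \emph{every} interface (four interfaces in case (i)) while simultaneously keeping $\mathcal L_\alpha\overline\varphi\ge\lambda^{\mathcal D}_\eta\overline\varphi$ for all large $\alpha$ — this is where the sign reversal coming from the minimum plateau (versus the maximum plateau in Proposition \ref{ldnlem2}) must be checked carefully, and is the only place the proof genuinely differs from the ones already given.
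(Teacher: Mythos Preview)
Your overall scheme is the paper's: Robin-perturbed eigenfunctions on the degenerate strip, quadratic bumps at $x=0,1$, monotone connectors in between, and Proposition \ref{appendixprop}. But the junction signs you wrote for the super-solution are inverted, and this is exactly the point you flagged as the ``main obstacle.''

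For a super-solution, Definition \ref{appendixldef}(2) requires $\partial_x\overline\varphi(x^+,t)<\partial_x\overline\varphi(x^-,t)$ at every interface. You assert $\partial_x\psi(\kappa_1^+,t)>0>\partial_x\psi(\kappa_1^-,t)$, which is the \emph{sub}-solution inequality. Worse, forcing $\partial_x\overline\varphi(\kappa_1^-,t)<0$ on the left connector, where $\partial_x m<0$, makes the drift term $-\alpha\,\partial_x m\,\partial_x\overline\varphi$ large \emph{negative} as $\alpha\to\infty$, so $\mathcal L_\alpha\overline\varphi\ge(\lambda_*-\epsilon)\overline\varphi$ cannot hold there. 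The same problem occurs symmetrically at $\kappa_2$. Your Robin choice (large $\eta>0$ in the parametrization $\partial_x\psi=\eta\psi$, or an enlarged Dirichlet strip) gives $\partial_x\psi(\kappa_1)>0$, $\partial_x\psi(\kappa_2)<0$; this is precisely the wrong sign for the super-solution side.

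The paper repairs this by taking the Robin problem in the form $\psi(\kappa_i,t)=\eta\,\partial_x\psi(\kappa_i,t)$ with $\eta<0$ (problem \eqref{liu003}), so that $\partial_x\psi^{\mathcal D}_\eta(\kappa_1,t)<0<\partial_x\psi^{\mathcal D}_\eta(\kappa_2,t)$ while still $\lambda^{\mathcal D}_\eta\nearrow\lambda^{\mathcal D\mathcal D}$ as $\eta\nearrow0$. A quadratic buffer $\psi^{\mathcal D}_\eta(\kappa_1,t)\bigl[1-\tfrac{(x-\kappa_1)^2}{2\delta_1}\bigr]$ on $[\kappa_1-\delta_1,\kappa_1]$ then has $\partial_x\overline\varphi(\kappa_1^-,t)=0>\partial_x\psi^{\mathcal D}_\eta(\kappa_1,t)=\partial_x\overline\varphi(\kappa_1^+,t)$, the correct super-solution jump, and the connector on $[\delta_2,\kappa_1-\delta_1]$ is \emph{increasing}, so that $-\alpha\,\partial_x m\,\partial_x\overline\varphi>0$ helps. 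For the upper bound the paper does not perturb at all: it takes the genuine Dirichlet eigenfunction $\psi^{\mathcal D}_0$ extended by zero off $[\kappa_1,\kappa_2]$; Hopf's lemma gives $\partial_x\psi^{\mathcal D}_0(\kappa_1)>0>\partial_x\psi^{\mathcal D}_0(\kappa_2)$, which is exactly the sub-solution jump condition, and the separate bumps near $x=0,1$ handle the other two terms in the minimum. In short, you have the $\eta>0$ and $\eta<0$ roles swapped between the two steps; once that is corrected your outline coincides with the paper's proof.
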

The advection $m$ given in  Proposition \ref{ldnlem3}{\rm (i)} can be illustrated in Fig. \ref{figure3_2}.
  \begin{figure}[http!!]
  \centering
\includegraphics[height=2.0in]{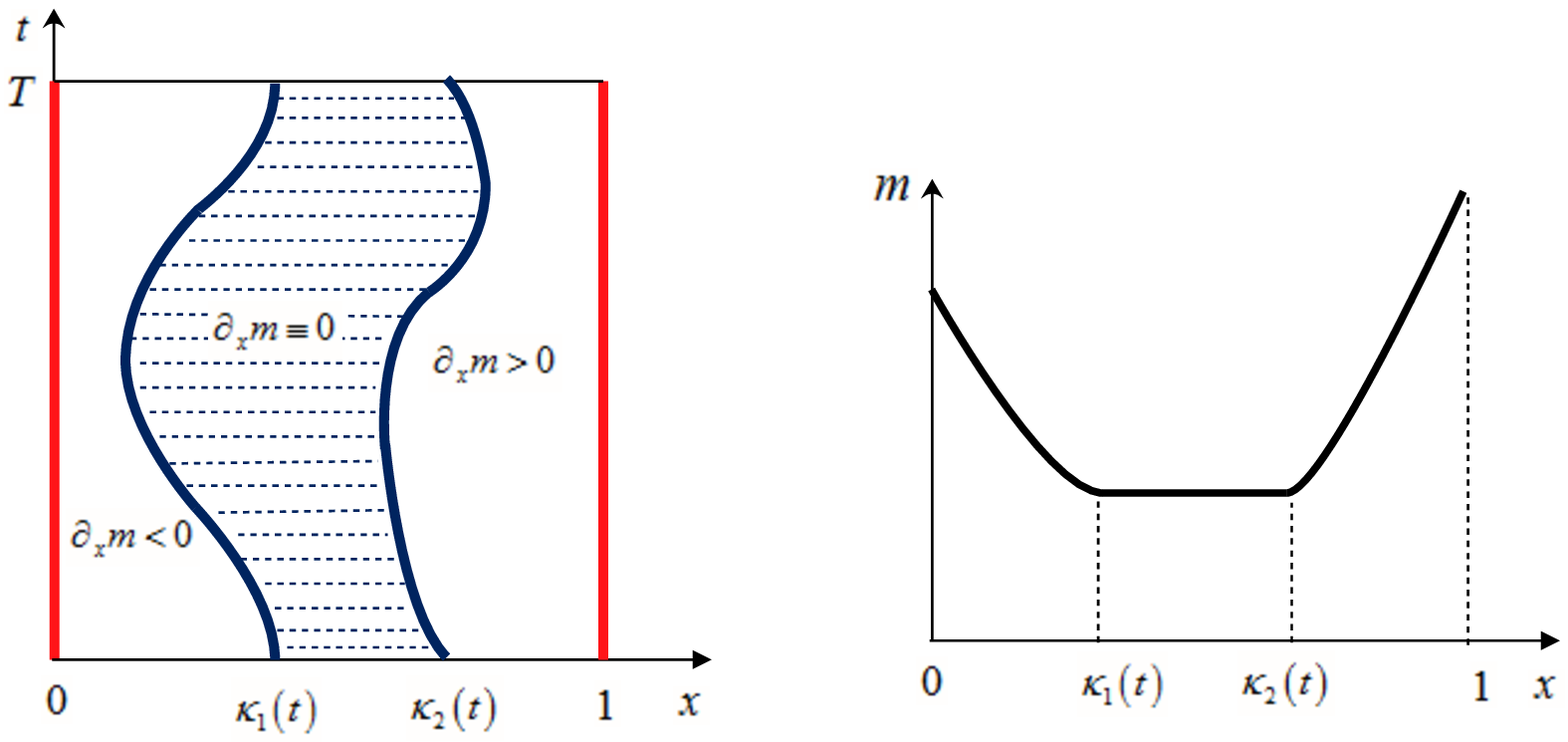}
  \caption{\small The profile of function $m$ given in Proposition \ref{ldnlem2}{\rm (i)}. In the left side picture, the blue colored curve correspond to functions $\kappa_1,\kappa_2$ and the blue shaded region denotes the region where
$\partial_x m=0$. The black colored curve in the right side picture represents the graph of $m$ for fixed $t\in[0,T]$.}\label{figure3_2}
  \end{figure}

\begin{proof}[Proof of Proposition {\rm \ref{ldnlem3}}]
We only prove {\rm(i)} since the assertions {\rm(ii)} and {\rm(iii)} follow from a similar but simpler argument.
Given any $\eta\in\mathbb{R}$, consider the eigenvalue problem
\begin{equation}\label{liu003}
 \left\{\begin{array}{ll}
 \medskip
 \partial_{t}\psi-\partial_{xx}\psi+V(x,t)\psi=\lambda\psi,\ \ &x\in\left(\kappa_1(t),\kappa_2(t)\right), t\in[0,T],\\
 \medskip
 \psi\left(\kappa_1,t\right)=\eta\partial_{x}\psi\left(\kappa_1,t\right),\,  \psi\left(\kappa_2,t\right)=-\eta\partial_{x}\psi\left(\kappa_2,t\right), \ \ & t\in[0,T], \\
 \psi(x,0)=\psi(x,T) &x\in [\kappa_1(0),\kappa_2(0)].
 \end{array}
 \right.
 \end{equation}
Let $\lambda^{\mathcal{D}}_\eta$ denote the principal eigenvalue of \eqref{liu003} and $\psi^{\mathcal{D}}_\eta>0$ be the corresponding principal eigenfunction.
For each $\eta\neq 0$, it is easily seen that $\psi^{\mathcal{D}}_\eta\left(\kappa_1(t), t\right)>0$ and $\psi^{\mathcal{D}}_\eta\left(\kappa_2(t), t\right)>0$ for all $ t\in[0,T]$, and
 $\lambda^{\mathcal{D}}_\eta\to\lambda^{\mathcal{D}\mathcal{D}}\big((\kappa_1,\kappa_2)\big)$ as $\eta\searrow 0$.
Given any $\eta\in\mathbb{R}$, set
$$\lambda_{\eta}^{\min}:=\min\left\{\lambda^{\mathcal{D}}_\eta,\ \,
\frac{1}{T}\int_0^T V\left(0,s\right)\mathrm{d}s,\ \, \frac{1}{T}\int_0^T V\left(1,t\right)\mathrm{d}t\right\}.
$$
Then it suffices to show $\lambda(\alpha)\to \lambda_{0}^{\min}$ as $\alpha\rightarrow\infty$.

\smallskip

\noindent {\bf Step 1. } We first establish
 \begin{equation}\label{ldn13}
 \liminf_{\alpha\to\infty}\lambda(\alpha)\geq\lambda_{0}^{\min}.
\end{equation}
Given any $\epsilon>0$ and $\eta<0$,  we  shall construct a strict super-solution $\overline \varphi>0$ such that 
 \begin{equation} \label{ldn10}
 \left\{\begin{array}{ll}
  \medskip
\mathcal{L}_{\alpha}\overline{\varphi}\geq\left(\lambda^{\min}_\eta-\epsilon\right)\overline{\varphi} &{\text{in}}\,\,\left((0,1)\times[0,T]\right)\setminus\mathbb{X},\\
 \medskip
 \partial_{x}\overline{\varphi}(0,t)\leq0, \ \, \partial_{x}\overline{\varphi}(1,t)\geq0  & {\text{in}}\,\,[0,T], \\
 \overline{\varphi}(x,0)=\overline{\varphi}(x,T) & \text{in}\,\,(0,1),
 \end{array}
 \right.
\end{equation}
provided that  $\alpha$ is sufficiently large, where the curve set $\mathbb{X}$ will be chosen such that $ \partial_{x}\overline{\varphi}(x^+,t)<  \partial_{x}\overline{\varphi}(x^-,t)$ for all $(x,t)\in \mathbb{X}$.

\smallskip

$\mathbf{(1)}$ For $ x\in \left[\kappa_1(t),\kappa_2(t)\right]$ and $t\in[0,T]$, we set $\overline \varphi(x,t):=\psi^{\mathcal{D}}_\eta(x,t)$. Due to $\partial_x m(x,t)=0$, by the definition of $\psi^{\mathcal{D}}_\eta$ in \eqref{liu003},  we derive that
$$\mathcal{L}_{\alpha}\overline{\varphi}-\left(\lambda^{\min}_\eta-\epsilon\right)\overline{\varphi}\geq\partial_t\psi^{\mathcal{D}}_\eta-\partial_{xx}\psi^{\mathcal{D}}_\eta+V(x,t)\psi^{\mathcal{D}}_\eta-\lambda^{\mathcal{D}}_\eta\psi^{\mathcal{D}}_\eta=0.$$

\smallskip

$\mathbf{(2)}$ For $x\in\left[\kappa_1(t)-\delta_1,\kappa_1(t)\right]$ and $t\in[0,T]$, we choose
 \begin{equation*}
 \overline \varphi(x,t):=\psi^{\mathcal{D}}_\eta\left(\kappa_1(t),t\right)\left[1-\tfrac{\left(x-\kappa_1(t)\right)^2}{2\delta_1}\right],
\end{equation*}
where the small constant $\delta_1>0$  will be selected below.
In view of $\eta<0$, it follows that
\begin{equation*}
 \partial_{x}\overline{\varphi}(\kappa_1^-,t)
 =0>\partial_x\psi^{\mathcal{N}}_{\eta}\left(\kappa_1,t\right)=\partial_{x}\overline{\varphi}(\kappa_1^+,t),\ \ \forall t\in[0,T].
\end{equation*}
Due to $- \alpha \partial_x m\cdot\partial_{x}\overline{\varphi}\geq0$ in this region, we can choose $\delta_1$
 small such  that for any $\alpha>0$,
 \begin{equation*}
 \begin{array}{l}
   \medskip
 ~~~~~~~\,\,\,\, \mathcal{L}_{\alpha}\overline{\varphi}-\left(\lambda^{\min}_\eta-\epsilon\right)\overline{\varphi}\\
  \medskip
  \geq \frac{{\rm d}\psi^{\mathcal{D}}_\eta\left(\kappa_1,t\right)}{{\rm d} t}\left[1-\frac{\left(x-\kappa_1\right)^2}{2\delta_1}
  \right]+\frac{(x-\kappa_1)\kappa'_1}{\delta_1}\psi^{\mathcal{D}}_\eta
  \left(\kappa_1,t\right)+\frac{\psi^{\mathcal{D}}_\eta
  \left(\kappa_1,t\right)}{\delta_1}\\
   \medskip
  \quad +(V-\lambda^{\min}_\eta)\psi^{\mathcal{D}}_\eta\left(\kappa_1,t\right)
  \left[1-\frac{\left(x-\kappa_1\right)^2}{2\delta_1}\right]\\
   \medskip
 \geq -\left|\frac{{\rm d}\psi^{\mathcal{D}}_\eta\left(\kappa_1,t\right)}{{\rm d} t}\right|-|\kappa'_1|\psi^{\mathcal{D}}_\eta
  \left(\kappa_1,t\right)
 -\left|V-\lambda^{\min}_\eta\right|\psi^{\mathcal{D}}_\eta\left(\kappa_1,t\right)
 +\frac{\psi^{\mathcal{D}}_\eta\left(\kappa_1,t\right)}{\delta_1}\\
  \geq0,  \qquad \forall (x,t)\in\ \left[\kappa_1-\delta_1,\kappa_1\right]\times[0,T].
\end{array}
\end{equation*}

\smallskip

$\mathbf{(3)}$ We first fix some constant  $0<\delta_2\ll 1$ such that
\begin{equation}\label{liu22}
|V(x,t)-V\left(0,t\right)|<\epsilon/2\,\,\text{ on }\,\left[0,\delta_2\right]\times[0,T].
\end{equation}
For $(x,t)\in\left[0,\delta_2\right]\times[0,T]$, one may construct
$\overline \varphi$ of variable-separated form:
  $$\overline{\varphi}(x,t):=\overline{z}_0(x)f_0(t),$$
where $f_0$  is a $T$-periodic function given by
 \begin{equation}\label{def_f_0}
 \begin{array}{ll}
 \displaystyle
 f_0(t)=\exp\left[{-\int_0^tV\left(0,s\right)\mathrm{d}s+{t\over T}\int_0^T V
 \left(0,s\right)\mathrm{d}s}\right].
 \end{array}
\end{equation}
Set $\overline{z}_0(x):=\varepsilon\left(M+x^2\right)$,
where $M>\frac{4}{\epsilon}$ is given and  $\varepsilon>0$ is chosen to satisfy
\begin{equation}\label{liu015}
\begin{array}{l}
\overline{\varphi}(\delta_2,t)=\varepsilon\left(M+\delta_2^2\right)f_0(t)
<\frac{1}{2}\psi^{\mathcal{D}}_\eta\left(\kappa_1,t\right)
=\overline{\varphi}\left(\kappa_1-\delta_1,t\right).
\end{array}
\end{equation}
Note that $-\alpha \partial_x m\partial_{x}\overline{z}_0
=-2\varepsilon\alpha x\partial_x m\geq0$ on $\left[0,\delta_2\right]\times[0,T]$.
By  the choice of $\delta_2$ and $M$, we have 
 \begin{equation*}
 \begin{array}{ll}
 -\partial_{xx}\overline{z}_0-\alpha \partial_x m
 \partial_{x}\overline{z}_0+\left[V(x,t)-V\left(0,t\right)
 +\epsilon\right]\overline{z}_0>0. 
 \end{array}
  \end{equation*}
We therefore calculate that
\begin{equation*}
 \begin{array}{l}
   \medskip
 ~~~~~~~\,\,\,\, \mathcal{L}_{\alpha}\overline{\varphi}-\left(\lambda^{\min}_\eta-\epsilon\right)\overline{\varphi}\\
  \medskip
  \displaystyle
  \geq\partial_{t}\overline{\varphi}-\partial_{xx}\overline{\varphi}- \alpha \partial_x m\partial_{x}\overline{\varphi}+V\overline{\varphi}-\left[{1\over T}\int_0^T V\left(0,s\right)\mathrm{d}s-\epsilon\right]\overline{\varphi}\\
   \medskip
   \displaystyle
 =\Big[-\partial_{xx}\overline{z}_0-\alpha \partial_x m\partial_{x}\overline{z}_0+\left(V-V\left(0,t\right)
 +\epsilon\right)\overline{z}_0\Big]f_0(t)\\
  >0, \quad \forall (x,t)\in\left[0,\delta_2\right]\times[0,T].
\end{array}
\end{equation*}
Moreover, noting that $\partial_{x}\overline{\varphi}(0,t)=0$ for all $ t\in[0,T]$,  the boundary conditions in \eqref{ldn10} hold.

\smallskip

$\mathbf{(4)}$ For $x\in\left[\delta_2,\kappa_1(t)-\delta_1\right]$ and $t\in[0,T]$, in view of
 $\overline{\varphi}(\delta_2,t)<\overline{\varphi}
 \left(\kappa_1-\delta_1, t\right)\, (\forall t\in[0,T])$ as given in  \eqref{liu015},
we may choose the  $T$-periodic $\overline{\varphi}>0$ such that $\partial_{x}\overline{\varphi}(x,t)>0$ and 
\begin{equation*}
\overline{\varphi}(x^+,t)=\overline{\varphi}(x^-,t)\quad\text{and}\quad \partial_{x}\overline{\varphi}\left(x^+,t\right)<\partial_{x}\overline{\varphi}\left(x^-,t\right) \quad \text{for }\, \,x=\delta_2, \, \kappa_1-\delta_1.
\end{equation*}
Since $-\partial_x m>\rho$ for some constant $\rho>0$ in dependent of $(x,t)$,
the above chosen  $\overline \varphi$ satisfies
$\mathcal{L}_{\alpha}\overline{\varphi}\geq
\left(\lambda^{\min}_\eta-\epsilon\right)\overline{\varphi}$ 
 by letting  $\alpha>0$ be large enough.

\smallskip

$\mathbf{(5)}$ For $x\in\left[\kappa_2(t),1\right]$ and $t\in[0,T]$,
similar to $\mathbf{(2)}$-$\mathbf{(4)}$, we define $\overline \varphi$ as follows:
\begin{equation*}
 \left\{
 \begin{array}{ll}
 \medskip
\overline \varphi(x,t)=\psi^{\mathcal{D}}_\eta\left(\kappa_2(t), t\right)
\left[1+\frac{1}{2\delta_3}\left(x-\kappa_2(t)\right)^2\right],\ \ &
x\in\left(\kappa_2(t),\kappa_2(t)+\delta_3\right),\, t\in[0,T],\\
 \medskip
\overline \varphi(x,t)=\tilde{\varepsilon}\left[\tilde{M}+\left(1-x\right)^2\right]f_1(t),&
x\in\left[1-\delta_4,1\right],\, t\in[0,T],\\
 \medskip
\partial_{x}\overline{\varphi}(x,t)<0, &
x\in\left(\kappa_2(t)+\delta_3, 1-\delta_4\right),\, t\in[0,T],\\
 \medskip
\partial_{x}\overline{\varphi}\left(x^+,t\right)
<\partial_{x}\overline{\varphi}(x^-,t),\ \ & x=\kappa_2(t)+\delta_3, 1-\delta_4,\, t\in[0,T],
 \end{array}
 \right.
\end{equation*}
where the $T$-periodic function $f_1$ is given by
\begin{equation}\label{def_f_11}
    \begin{array}{c} \displaystyle
      f_1(t)= \exp\left[{-\int_0^tV\left(1,s\right)\mathrm{d}s+{t\over T}\int_0^T V\left(1,s\right)\mathrm{d}s}\right].
     \end{array}
\end{equation}
Proceeding as  above, one can verify that $\overline \varphi$ satisfies $\mathcal{L}_{\alpha}\overline{\varphi}\geq\left(\lambda^{\min}_\eta-\epsilon\right)\overline{\varphi}$ by choosing small positive constants $\delta_3,\delta_4,\tilde{\varepsilon}$ and large $\tilde{M}$, provided that $\alpha$ is large enough.

Until now, we have constructed a continuous super-solution $\overline \varphi>0$ satisfying  \eqref{ldn10} with
\begin{equation*}
    \begin{array}{c}
       \mathbb{X}=\Big\{(\delta_2,t), \, (\kappa_1(t)-\delta_1,t), \, (\kappa_1(t),t), \,  (\kappa_2(t),t), \,  (\kappa_2(t)+\delta_3, t), \, (1-\delta_4,t): \, t\in [0,T] \Big\}. 
     \end{array}
 \end{equation*}
 This together with Proposition \ref{appendixprop} implies for any $\eta<0$ and $\epsilon>0$,
$$\liminf_{\alpha\to\infty}\lambda(\alpha)\geq\lambda^{\min}_\eta-\epsilon.$$
Letting $\epsilon\searrow0$ and $\eta\searrow0$ give \eqref{ldn13}. Step 1 is thus complete.

\smallskip

\noindent {\bf Step 2. } We now turn to prove
 \begin{equation}\label{ldn12}
 \limsup_{\alpha\to\infty}\lambda(\alpha)\leq\lambda_{0}^{\min}.
\end{equation}

First, we show
 $\limsup\limits_{\alpha\to\infty}\lambda(\alpha)\leq\lambda^{\mathcal{D}}_0$. Define
 \begin{equation*}
 \underline \varphi(x,t):=
 \left\{
 \begin{array}{ll}
 \medskip
\psi^{\mathcal{D}}_0(x,t), & x\in\left[\kappa_1(t),\kappa_2(t)\right],\, t\in[0,T],\\
\smallskip
0, &x\in \left[0,\kappa_1(t)\right)\cup\left(\kappa_2(t),1\right], \, t\in[0,T],
 \end{array}
 \right.
\end{equation*}
where $\psi^{\mathcal{D}}_0$ denotes the principal eigenfunction of  \eqref{liu003} with $\eta=0$.
Clearly, such a function $\underline \varphi$ is continuous and satisfies
\begin{equation} \label{ldn11}
 \left\{\begin{array}{ll}
  \medskip
\mathcal{L}_{\alpha}\underline{\varphi}\leq\lambda^{\mathcal{D}}_0\underline{\varphi}, &x\in \left((0,1)\setminus\{\kappa_1(t), \kappa_2(t)\}\right),\, t\in[0,T],\\
 \medskip
 \partial_{x}\underline{\varphi}(0,t)=0, \ \ \partial_{x}\underline{\varphi}(1,t)=0,  &t\in[0,T], \\
  \medskip
 \underline{\varphi}(x,0)=\underline{\varphi}(x,T), &x\in (0,1).
 \end{array}
 \right.
\end{equation}
We further note that, for any $t\in[0,T]$,
\begin{align*}
    &\partial_x\underline{\varphi}(\kappa_1^+,t)=\partial_x\psi^{\mathcal{D}}_0(\kappa_1,t)>0
 =\partial_x\underline{\varphi}(\kappa_1^-,t)\ \text{ and }\\
 &\partial_x\underline{\varphi}(\kappa_2^+,t)=0>\partial_x\psi^{\mathcal{D}}_0(\kappa_2,t)
 =\partial_x\underline{\varphi}(\kappa_2^-,t).
\end{align*}
Applying Proposition \ref{appendixprop} with $\mathbb{X}=\{(\kappa_1(t),t),\ (\kappa_2(t),t): t\in[0, T]\}$
and $\mathbb{T}=\emptyset$ to \eqref{ldn11}, we can obtain the desired result.

We next prove $\limsup\limits_{\alpha\to\infty}\lambda(\alpha)
\leq\frac{1}{T}\int_0^T V\left(0,s\right)\mathrm{d}s$. Choose
$\underline\varphi(x,t):=\underline z(x)f_0(t)$ with $T$-periodic function $f_0$ defined by \eqref{def_f_0}.
For any fixed $\epsilon>0$, we define
\begin{equation*}
 \underline{z}(x):=
 \left\{\begin{array}{ll}
 \smallskip
1+\frac{\epsilon\delta^2_2}{16}-\frac{\epsilon}{4}x^2,\ \ &x\in\left[0,\frac{\delta_2}{2}\right],\\
\smallskip
\underline{z}_1(x), &x\in\left(\frac{\delta_2}{2},\delta_2\right],\\
\smallskip
0,  &x\in\left(\delta_2,1\right],
 \end{array}
 \right.
\end{equation*}
where $\delta_2$ is defined by \eqref{liu22} in Step 1, and
$\underline{z}_1(x)$ satisfies 
\begin{equation*}
 \left\{\begin{array}{l}
 \smallskip
\underline{z}'_{1}<0~\ \ \,{\text{in}}\,\,\left(\frac{\delta_2}{2},\delta_2\right],\\
\smallskip
\underline{z}_1\left(\frac{\delta_2}{2}\right)=1, ~\,~\underline{z}_1\left(\delta_2\right)=0,\\
\underline{z}'_{1}\left(\frac{\delta_2}{2}^-\right)=-\frac{\epsilon\delta_2}{4}
<\underline{z}'_{1}\left(\frac{\delta_2}{2}^+\right).
 \end{array}
 \right.
\end{equation*}
Clearly, $\underline z'(\delta_2^-)<0=\underline z'(\delta_2^+)$.
Furthermore, for sufficiently large $\alpha>0$, there holds
\begin{equation*}
 \left\{\begin{array}{ll}
 \medskip
 -\underline{z}''-\alpha \partial_x m \cdot\underline{z}'+\left[V(x,t)-V\left(0,t\right)
 -\epsilon\right]\underline{z}<0 &{\text{in}}\,\,\,\,(0,1),\\
 \underline z' (0)=0, \ \ \underline z'(1)=0.
 \end{array}
 \right.
 \end{equation*}
Hence, such a function $\underline{\varphi}=\underline z(x)f_0(t)$ is a strict sub-solution in the sense that it satisfies
 \begin{equation*}
 \left\{\begin{array}{ll}
 \medskip
 \mathcal{L}_{\alpha}\underline{\varphi}\leq\left[{1\over T}\int_0^T V\left(0,s\right)\mathrm{d}s+\epsilon\right]\underline{\varphi} &{\text{in}}\,\,\left((0,1)\times[0,T]\right)\setminus \mathbb{X},\\
 \medskip
 \partial_{x}\underline{\varphi}(0,t)=0,\ \  \partial_{x}\underline{\varphi}(1,t)=0  &{\text{on}}\,\,[0,T], \\
 \medskip
 \underline{\varphi}(x,0)=\underline{\varphi}(x,T) &{\text{on}}\,\,(0,1),\\
 \partial_{x}\underline{\varphi}(x^+,t)> \partial_{x}\underline{\varphi}(x^-,t) &\text{in}\,\,\mathbb{X},
 \end{array}
 \right.
 \end{equation*}
with $\mathbb{X}=\left\{\frac{\delta_2}{2},\delta_2\right\}\times[0,T]$. By Proposition \ref{appendixprop} again, we have
 \begin{equation*}
\limsup\limits_{\alpha\to\infty}\lambda(\alpha)\leq\frac{1}{T}\int_0^T V\left(0,s\right)\mathrm{d}s.
\end{equation*}

A similar argument yields  $\limsup\limits_{\alpha\to\infty}\lambda(\alpha)\leq\frac{1}{T}\int_0^T V\left(1,t\right)\mathrm{d}t$. Hence, \eqref{ldn12} holds. Proposition \ref{ldnlem3} follows from \eqref{ldn13}) and \eqref{ldn12}.
\end{proof}

By the similar arguments as in  Propositions \ref{ldnlem2} and \ref{ldnlem3}, we can state the following results:

\begin{prop}\label{ldnlem4}
Assume that there exist $T$-periodic functions $\kappa_1,\kappa_2\in C^1(\mathbb{R})$ such that $0\leq\kappa_1(t)<\kappa_2(t)\leq1$  for all $t\in[0,T]$ and
 $$\begin{cases}
\partial_x m(x,t)>0,\ \ & x \in\left(0,\kappa_1(t)\right), \,\, t\in[0,T],\\
\partial_x m(x,t)=0,& x \in\left[\kappa_1(t),\kappa_2(t)\right], \,\, t\in[0,T],\\
\partial_x m(x,t)>0,& x\in\left(\kappa_2(t),1\right), \,\, t\in[0,T].
\end{cases}$$

 \begin{itemize}
    \item [{\text(i)}] If  $0\leq\kappa_1(t)<\kappa_2(t)<1$ for all $t\in[0,T]$, then
 $$
 \lim_{\alpha\rightarrow\infty}\lambda(\alpha)
 =\min\left\{\lambda^{\mathcal{N}\mathcal{D}}\big((\kappa_1,\kappa_2)\big),\ \
 \frac{1}{T}\int_0^T V\left(1,s\right)\mathrm{d}s\right\};
 $$

    \item [{\text(ii)}] If $0<\kappa_1(t)<1$ and $\kappa_2\equiv 1$ for all $t\in[0,T]$, then
 $$
  \lim_{\alpha\rightarrow\infty}\lambda(\alpha)
  =\lambda^{\mathcal{N}\mathcal{N}}\big((\kappa_1,1)\big).
  $$
 \end{itemize}
\end{prop}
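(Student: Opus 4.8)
The plan is to run the two-sided squeeze of Propositions~\ref{ldnlem2} and~\ref{ldnlem3}: bound $\liminf_{\alpha\to\infty}\lambda(\alpha)$ from below by a positive strict super-solution, bound $\limsup_{\alpha\to\infty}\lambda(\alpha)$ from above by strict nonnegative sub-solutions, and invoke Proposition~\ref{appendixprop}. Cases (i) and (ii) are parallel --- (ii) being the degenerate case $\kappa_2\equiv1$, where the flat strip already meets the Neumann boundary $x=1$ --- so I describe (i). For $\eta\in\mathbb{R}$, let $\lambda^{\mathcal{N}\mathcal{D}}_\eta$, with positive principal eigenfunction $\psi^{\mathcal{N}\mathcal{D}}_\eta$, be the principal eigenvalue of $\partial_t-\partial_{xx}+V$ on the moving strip $\big(\kappa_1(t),\kappa_2(t)\big)$ under $\partial_x\psi(\kappa_1,t)=\eta\psi(\kappa_1,t)$, $\psi(\kappa_2,t)=-\eta\,\partial_x\psi(\kappa_2,t)$ and $T$-periodicity in $t$; then $\eta\mapsto\lambda^{\mathcal{N}\mathcal{D}}_\eta$ is continuous, $\lambda^{\mathcal{N}\mathcal{D}}_\eta\to\lambda^{\mathcal{N}\mathcal{D}}\big((\kappa_1,\kappa_2)\big)$ as $\eta\to0$, and for $\eta<0$ one has $\partial_x\psi^{\mathcal{N}\mathcal{D}}_\eta(\kappa_1,\cdot)<0$, $\partial_x\psi^{\mathcal{N}\mathcal{D}}_\eta(\kappa_2,\cdot)>0$ (the signs are reversed for $\eta>0$). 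Put $\lambda^{\min}_\eta:=\min\big\{\lambda^{\mathcal{N}\mathcal{D}}_\eta,\ \frac{1}{T}\int_0^TV(1,s)\,\mathrm{d}s\big\}$; it suffices to show $\lambda(\alpha)\to\lambda^{\min}_0$.

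For the lower bound I fix $\eta<0$ and small $\epsilon>0$ and construct, following Step~1 of Proposition~\ref{ldnlem3}, a positive strict super-solution $\overline{\varphi}$ of $\mathcal{L}_\alpha-(\lambda^{\min}_\eta-\epsilon)$ for all large $\alpha$, pieced together as follows: equal to $\psi^{\mathcal{N}\mathcal{D}}_\eta$ on $[\kappa_1,\kappa_2]$ (where $\partial_x m\equiv0$, so the inequality is an equality up to the $\epsilon$-shift); equal to $\varepsilon\big[\tilde M+(1-x)^2\big]f_1(t)$ near $x=1$, where $f_1(t)=\exp\big[-\int_0^tV(1,s)\,\mathrm{d}s+\frac{t}{T}\int_0^TV(1,s)\,\mathrm{d}s\big]$ and $\tilde M$ is large, which works because $\frac{1}{T}\int_0^TV(1,s)\,\mathrm{d}s\ge\lambda^{\min}_\eta$ and $-\alpha\partial_x m\,\partial_x\overline{\varphi}\ge0$ there ($\partial_x m>0$ near $x=1$ and $\partial_x\overline{\varphi}\le0$), while $\partial_x\overline{\varphi}(1,t)=0$; strictly decreasing on the two intermediate bands $[0,\kappa_1-\delta]$ and $[\kappa_2+\delta',1-\delta_4]$, where $|\partial_x m|\ge\rho>0$ makes $-\alpha\partial_x m\,\partial_x\overline{\varphi}$ dominate for large $\alpha$; and, on the thin one-sided neighbourhoods $[\kappa_1-\delta,\kappa_1]$ and $[\kappa_2,\kappa_2+\delta']$ where $\partial_x m$ degenerates, given by concave quadratic profiles $\psi^{\mathcal{N}\mathcal{D}}_\eta(\kappa_i,t)\big[1-a_i(x-\kappa_i)-b_i(x-\kappa_i)^2\big]$ ($a_i\ge0$ small, $b_i$ large) that are monotone in the direction keeping $-\alpha\partial_x m\,\partial_x\overline{\varphi}\ge0$ and whose curvature contribution $-\partial_{xx}\overline{\varphi}=2b_i\psi^{\mathcal{N}\mathcal{D}}_\eta(\kappa_i,t)$ absorbs the remaining lower-order terms \emph{uniformly in $\alpha$}. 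The slopes $a_i$ and then $\varepsilon$ are chosen so that all interface derivative jumps point the right way, $\partial_x\overline{\varphi}(x^+,t)<\partial_x\overline{\varphi}(x^-,t)$ (using the signs of $\partial_x\psi^{\mathcal{N}\mathcal{D}}_\eta(\kappa_i,\cdot)$), and $\overline{\varphi}$ is continuous, $T$-periodic and positive. Proposition~\ref{appendixprop} yields $\liminf_{\alpha\to\infty}\lambda(\alpha)\ge\lambda^{\min}_\eta-\epsilon$; letting $\epsilon\searrow0$ then $\eta\searrow0$ gives $\liminf_{\alpha\to\infty}\lambda(\alpha)\ge\lambda^{\min}_0$.

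For the upper bound I build two strict sub-solutions. The first, with $\eta>0$, equals $\psi^{\mathcal{N}\mathcal{D}}_\eta$ on $[\kappa_1,\kappa_2]$, is extended by $0$ on $(\kappa_2,1]$ --- so $\partial_x\underline{\varphi}(\kappa_2^+)=0>\partial_x\psi^{\mathcal{N}\mathcal{D}}_\eta(\kappa_2,\cdot)=\partial_x\underline{\varphi}(\kappa_2^-)$ --- and on $[0,\kappa_1]$ is glued to a positive function increasing in $x$ (via a local convex correction near $\kappa_1$ and the large-$\alpha$ argument away from it, mirroring the super-solution construction); this gives $\limsup_{\alpha\to\infty}\lambda(\alpha)\le\lambda^{\mathcal{N}\mathcal{D}}_\eta\to\lambda^{\mathcal{N}\mathcal{D}}\big((\kappa_1,\kappa_2)\big)$. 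The second is $\underline z(x)f_1(t)$ with $\underline z\ge0$ supported in $[1-\delta_2,1]$, increasing in $x$, of concave quadratic shape near $x=1$ with $\underline z'(1)=0$, as in Step~2 of Proposition~\ref{ldnlem3} mirrored at $x=1$; since $\partial_x m>0$ near $x=1$ we have $-\alpha\partial_x m\,\underline z'\le0$, and with $f_1$ this yields $\mathcal{L}_\alpha\underline{\varphi}\le\big[\frac{1}{T}\int_0^TV(1,s)\,\mathrm{d}s+\epsilon\big]\underline{\varphi}$, hence $\limsup_{\alpha\to\infty}\lambda(\alpha)\le\frac{1}{T}\int_0^TV(1,s)\,\mathrm{d}s$. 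Together these prove (i). For (ii) the strip is $[\kappa_1,1]$: replace $\psi^{\mathcal{N}\mathcal{D}}_\eta$ throughout by the eigenfunction $\psi^{\mathcal{N}\mathcal{N}}_\eta$ of the same heat operator on $\big(\kappa_1(t),1\big)$ with Robin condition $\partial_x\psi=\eta\psi$ at $\kappa_1$ and the genuine Neumann condition at $x=1$, drop the competitor near $x=1$, and run the identical scheme to obtain $\lambda(\alpha)\to\lambda^{\mathcal{N}\mathcal{N}}\big((\kappa_1,1)\big)$.

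The main obstacle, as in Propositions~\ref{ldnlem2}--\ref{ldnlem3}, is the behaviour at the degenerate right edge $\kappa_2$. There $\partial_x m$ vanishes, so one cannot win by taking $\alpha$ large in a full neighbourhood of $\kappa_2$; the local profile must instead be engineered so that the differential inequality holds for \emph{every} $\alpha>0$, using only $-\alpha\partial_x m\,\partial_x\overline{\varphi}\ge0$ together with the curvature term, and simultaneously realizing a Dirichlet-type endpoint at $\kappa_2$. That this endpoint is of Dirichlet type --- in contrast to $\kappa_1$, and to both edges in Proposition~\ref{ldnlem2} --- is forced by $m$ being strictly increasing on $(\kappa_2,1)$: the flat strip fails to be a local-maximum surface of $m$ at its right edge, so as $\alpha\to\infty$ any mass placed near $\kappa_2$ is carried toward $x=1$, which is exactly what $\psi^{\mathcal{N}\mathcal{D}}_\eta(\kappa_2,\cdot)\to0$ as $\eta\to0$ records. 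The remaining task --- making the one-sided derivative jumps at the interfaces $\kappa_1-\delta,\kappa_1,\kappa_2,\kappa_2+\delta'$ (and the analogous ones for the two sub-solutions) compatible with continuity, positivity and $T$-periodicity --- is bookkeeping identical to that in the cited proofs.
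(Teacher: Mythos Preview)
Your proof is correct and follows exactly the approach the paper intends: the paper does not give a separate proof of Proposition~\ref{ldnlem4} but simply says it follows ``by the similar arguments as in Propositions~\ref{ldnlem2} and~\ref{ldnlem3}'', and you have carried out precisely that hybrid construction---the Neumann-type quadratic patch of Proposition~\ref{ldnlem2} at $\kappa_1$, the Dirichlet-type patch and boundary competitor of Proposition~\ref{ldnlem3} at $\kappa_2$ and $x=1$, glued by monotone connectors where $|\partial_x m|\ge\rho>0$. One minor simplification: for the first sub-solution you may extend $\psi^{\mathcal{N}\mathcal{D}}_\eta$ by $0$ on $[0,\kappa_1)$ as well (the jump at $\kappa_1$ is $\partial_x\underline\varphi(\kappa_1^+)=\eta\psi(\kappa_1,t)>0=\partial_x\underline\varphi(\kappa_1^-)$ for $\eta>0$), avoiding the separate construction on the left.
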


\begin{prop}\label{ldnlem5} Assume that there exist $T$-periodic functions $\kappa_1,\kappa_2\in C^1(\mathbb{R})$ such that $0\leq\kappa_1(t)<\kappa_2(t)\leq1$  for all $t\in[0,T]$ and
 $$\begin{cases}
\partial_x m(x,t)<0,\ \ & x \in\left(0,\kappa_1(t)\right), \,\, t\in[0,T],\\
\partial_x m(x,t)=0,& x \in\left[\kappa_1(t),\kappa_2(t)\right], \,\, t\in[0,T],\\
\partial_x m(x,t)<0,& x\in\left(\kappa_2(t),1\right), \,\, t\in[0,T].
\end{cases}$$
 \begin{itemize}
    \item [{\text(i)}] If $0<\kappa_1(t)<\kappa_2(t)\leq1$ for all $t\in[0,T]$, then
$$\lim_{\alpha\rightarrow\infty}\lambda(\alpha)
=\min\left\{\lambda^{\mathcal{D}\mathcal{N}}\big((\kappa_1,\kappa_2)\big),\ \
 \frac{1}{T}\int_0^T V\left(0,s\right)\mathrm{d}s\right\};$$

\item [{\text(ii)}] If $\kappa_1\equiv 0$ and $0<\kappa_2(t)<1$ for all $t\in[0,T]$, then
 $$
  \lim_{\alpha\rightarrow\infty}\lambda(\alpha)
  =\lambda^{\mathcal{N}\mathcal{N}}\big((0,\kappa_2)\big).
  $$
 \end{itemize}
\end{prop}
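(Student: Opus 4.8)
The plan is to prove both assertions by the method of generalized sub/super‑solutions, exactly as in Propositions \ref{ldnlem2}, \ref{ldnlem3} and \ref{ldnlem4}. For each part we exhibit, for every small $\epsilon>0$ and every small auxiliary Robin parameter $\eta$, a positive strict super‑solution of $\mathcal{L}_\alpha-\mu$ and a nonnegative strict sub‑solution of $\mathcal{L}_\alpha-\nu$ in the sense of Definition \ref{appendixldef}, with $\mu$ (resp.\ $\nu$) slightly below (resp.\ above) the claimed limit; Proposition \ref{appendixprop} then gives $\liminf_{\alpha\to\infty}\lambda(\alpha)\ge\mu$ and $\limsup_{\alpha\to\infty}\lambda(\alpha)\le\nu$, and letting $\epsilon\to0$ and $\eta\to0$ yields the result. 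Since $\partial_x m<0$ on both $(0,\kappa_1(t))$ and $(\kappa_2(t),1)$ and $\partial_x m\equiv0$ on $[\kappa_1(t),\kappa_2(t)]$, the map $x\mapsto m(x,t)$ is maximal at $x=0$ (so concentration may occur there, producing the term $\frac1T\int_0^TV(0,s)\,\mathrm ds$) and minimal at $x=1$ (no concentration); on the plateau the advection is absent, the drift on $(0,\kappa_1)$ points toward $x=0$ (mass leaks out of the plateau through $\kappa_1$) and the drift on $(\kappa_2,1)$ points toward $\kappa_2$ (mass is pushed back into the plateau through $\kappa_2$), whence the plateau behaves like problem \eqref{definition} with a Dirichlet condition at $\kappa_1$ and a Neumann condition at $\kappa_2$. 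This is the heuristic behind the appearance of $\lambda^{\mathcal{D}\mathcal{N}}\big((\kappa_1,\kappa_2)\big)$ in (i) and of $\lambda^{\mathcal{N}\mathcal{N}}\big((0,\kappa_2)\big)$ in (ii), and it dictates which building block is used on each region.

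\textbf{Part (i): lower bound.} I would build $\overline\varphi>0$ piecewise. Near $x=0$ take the separated form $\overline\varphi=\overline z_0(x)f_0(t)$ with $\overline z_0(x)=\varepsilon(M+x^2)$ and $f_0$ the $T$‑periodic factor as in the proof of Proposition \ref{ldnlem3}, so that on a small $\{|x|<\delta_2\}$ the harmful term $-\partial_{xx}\overline z_0$ is dominated by $\big[V-V(0,t)+\epsilon\big]\overline z_0\ge\frac\epsilon2\varepsilon M$ and $-\alpha\partial_x m\,\partial_x\overline z_0\ge0$, which yields the bound $\frac1T\int_0^TV(0,s)\,\mathrm ds-\epsilon$ there. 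On $[\kappa_1,\kappa_2]$ take $\overline\varphi=\psi_\eta$, the principal eigenfunction of the Robin regularization of \eqref{definition} approaching the $\mathcal{D}\mathcal{N}$ problem (with $\eta<0$), for which the boundary derivatives have the signs needed to produce downward jumps of $\partial_x\overline\varphi$ at $\kappa_1,\kappa_2$ and whose eigenvalue tends to $\lambda^{\mathcal{D}\mathcal{N}}\big((\kappa_1,\kappa_2)\big)$ as $\eta\nearrow0$; since $\partial_x m\equiv0$ there, $\mathcal{L}_\alpha\overline\varphi$ reduces to the parabolic part and the defining inequality holds with equality. On the thin strips $[\kappa_1-\delta,\kappa_1]$ and $[\kappa_2,\kappa_2+\delta]$ insert concave quadratic bridges, so that $-\partial_{xx}\overline\varphi\gg0$ compensates the degeneration $\partial_x m(\kappa_i(t),t)=0$, with interface slopes realizing the required jumps. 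On the remaining intervals, where $|\partial_x m|\ge\rho>0$, connect $\overline\varphi$ monotonically and use $-\alpha\partial_x m\,\partial_x\overline\varphi\ge\alpha\rho|\partial_x\overline\varphi|$ for $\alpha\gg1$; the Neumann inequalities at $x=0,1$ hold because $\overline\varphi$ is flat at $x=0$ and increasing near $x=1$. Taking $\mathbb X$ to be the finite family of interface curves and $\mathbb T=\emptyset$, Proposition \ref{appendixprop} gives $\liminf_{\alpha\to\infty}\lambda(\alpha)\ge\min\{\lambda_\eta,\frac1T\int_0^TV(0,s)\,\mathrm ds\}-\epsilon$.

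\textbf{Part (i): upper bound.} Symmetrically I would use two sub‑solutions. First, a spike $\underline z(x)f_0(t)$ supported near $x=0$, as in Step 2 of Proposition \ref{ldnlem3}, giving $\limsup_{\alpha\to\infty}\lambda(\alpha)\le\frac1T\int_0^TV(0,s)\,\mathrm ds$. Second, $\underline\varphi$ equal on $[\kappa_1,\kappa_2]$ to the eigenfunction of the mixed problem ``exact Dirichlet at $\kappa_1$, Robin($\eta>0$) at $\kappa_2$'' (eigenvalue $\to\lambda^{\mathcal{D}\mathcal{N}}\big((\kappa_1,\kappa_2)\big)$ as $\eta\searrow0$), extended by $0$ on $[0,\kappa_1)$ — continuous since that eigenfunction vanishes at $\kappa_1$ — and on $(\kappa_2,1)$ by a convex quadratic bridge decaying to $0$ at some $c(t)<1$ and then $\equiv0$. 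Convexity makes $-\partial_{xx}\underline\varphi\ll0$ near $\kappa_2$ where $\partial_x m$ degenerates, the interface slopes realize the required upward jumps of $\partial_x\underline\varphi$ (including at $\kappa_1$, where $\partial_x\underline\varphi$ jumps from $0$ to the positive Dirichlet‑boundary slope), and $-\alpha\partial_x m\,\partial_x\underline\varphi$ dominates where $|\partial_x m|\ge\rho$. Proposition \ref{appendixprop} then gives the matching upper bound, and combining the four bounds proves (i). For (ii) the plateau already reaches $x=0$, where the genuine Neumann condition holds, so no regularization is needed at $x=0$: on $[0,\kappa_2]$ use the eigenfunction of the corresponding problem with exact Neumann at $0$ and Robin at $\kappa_2$ ($\eta<0$ for the super‑, $\eta>0$ for the sub‑solution, eigenvalue $\to\lambda^{\mathcal{N}\mathcal{N}}\big((0,\kappa_2)\big)$), the equation holding with equality there; on $(\kappa_2,1)$ repeat the $\kappa_2$‑bridge and monotone‑connector construction toward $x=1$, where, $x=1$ being a strict minimum of $m$, the sub‑solution may vanish and the super‑solution have zero slope, so no term $\frac1T\int_0^TV(1,s)\,\mathrm ds$ appears; this yields $\lim_{\alpha\to\infty}\lambda(\alpha)=\lambda^{\mathcal{N}\mathcal{N}}\big((0,\kappa_2)\big)$.

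\textbf{Main difficulty.} The delicate part is the bookkeeping along the interface curves $x=\kappa_1(t),\kappa_2(t),\kappa_i(t)\pm\delta$: on each subregion the sign of the dominant term $-\alpha\partial_x m\,\partial_x\varphi$ forces the sub/super‑solution to be monotone in a prescribed direction, and this must be reconciled with the strict one‑sided jump of $\partial_x\varphi$ along $\mathbb X$ required by Definition \ref{appendixldef}, with the Neumann inequalities at $x=0,1$, and with the degeneration $\partial_x m(\kappa_i(t),t)=0$ — which is exactly why the quadratic bridges must be oriented so that their second‑derivative term carries the favorable sign. Orienting everything correctly at the Neumann‑type interface $\kappa_2$, where $\partial_x m$ passes from $0$ to negative and mass is pushed back into the plateau, is the subtle point, and is precisely what the Robin parameter $\eta$ is introduced to handle.
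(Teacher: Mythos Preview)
Your proposal is correct and follows essentially the same route the paper takes: the paper does not spell out a proof of this proposition but states that it follows by the arguments of Propositions~\ref{ldnlem2} and~\ref{ldnlem3}, and your outline is precisely the appropriate hybrid of those two constructions---Robin regularization of the plateau eigenproblem with the $\mathcal D$--type perturbation at $\kappa_1$ and the $\mathcal N$--type perturbation at $\kappa_2$, quadratic bridges on the thin strips, monotone connectors where $|\partial_x m|\ge\rho>0$, and (for part~(i)) the separated piece $\overline z_0(x)f_0(t)$ and the spike sub-solution near $x=0$ exactly as in Step~2 of Proposition~\ref{ldnlem3}. Your identification of which side receives the Dirichlet versus Neumann condition, and hence why no $\frac1T\int_0^T V(1,s)\,\mathrm ds$ term appears, is also correct.

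Two small points worth tightening. First, on $(\kappa_2(t),1)$ the hypothesis only gives $\partial_x m<0$ on the open interval, so $\partial_x m(1,t)$ may vanish; your monotone connector then does not reach $x=1$ with a uniformly favorable advection term, and one should cap the super-solution near $x=1$ by a concave quadratic $M\bigl[1-(x-1)^2/(2\delta^2)\bigr]$ as in part~$\mathbf{(3)}$ of Proposition~\ref{ldnlem1} (this also delivers $\partial_x\overline\varphi(1,t)\ge0$ directly). Second, in your sub-solution bridge on $[\kappa_2,\kappa_2+\delta]$ you need both $\partial_x\underline\varphi\le0$ (for the advection sign) and $-\partial_{xx}\underline\varphi$ large negative; the choice $\underline\varphi=\psi(\kappa_2,t)\bigl[1-r(x-\kappa_2)+R(x-\kappa_2)^2\bigr]$ with $0<r<\eta$ and $R=r/(2\delta)$ (so $\delta$ is chosen small after $r$) achieves both, exactly mirroring the super-solution bridge in Proposition~\ref{ldnlem2}, step~$\mathbf{(2)}$. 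With these two clarifications your argument goes through verbatim.
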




\begin{proof}[Proof of Theorem {\rm \ref{ldnthm2}}]
Theorem \ref{ldnthm2} can be  established by constructing the suitable super/sub-solutions and
applying Proposition \ref{appendixprop} as before.

For each  $i=\mathbf{B}$ ($0\leq i\leq N$), define  $K_i:=\{(x,t): x\in [\kappa_i(t)-\delta, \kappa_{i+1}(t)+\delta], \, t\in [0,T]\}$ with some small $\delta>0$ to be determined later.  On the region $K_i$, we may construct the desired super-solution and sub-solution by using directly the arguments  in  Propositions \ref{ldnlem2} and \ref{ldnlem3}. Indeed, note that $\mathbf{B}=\mathbf{E}(\mathcal{N},\mathcal{N})\cup \mathbf{E}(\mathcal{N},\mathcal{D})\cup\mathbf{E}(\mathcal{D},\mathcal{N})\cup\mathbf{E}(\mathcal{D},\mathcal{D})$ as defined in Theorem \ref{ldnthm2}. When $i\in \mathbf{E}(\mathcal{N},\mathcal{N})$, the constructions  can follow those in Proposition  \ref{ldnlem2}, while we can apply the arguments in Proposition \ref{ldnlem3} for $i\in \mathbf{E}(\mathcal{D},\mathcal{D})$. Furthermore, as in Propositions \ref{ldnlem4} and \ref{ldnlem5},  the construction for $i\in \mathbf{E}(\mathcal{D},\mathcal{N})\cup \mathbf{E}(\mathcal{N},\mathcal{D})$ can be completed by integrating the ideas in Propositions  \ref{ldnlem2} and \ref{ldnlem3}. Finally,  on the remaining region $([0,1]\times[0,T])\setminus  (\bigcup_{i\in\mathbf{B}}K_i)$,  the super-solution and sub-solution can be constructed by a similar argument as in Proposition \ref{ldnlem1}.
Therefore, by choosing $\delta$ small  and applying Proposition \ref{appendixprop},  Theorem \ref{ldnthm2} can be proved.
\end{proof}

\section{Temporally degenerate advection: Proof of Theorem \ref{LTD_thm main}}\label{S4}
In this section, we consider the case when the advection $m$ allows temporal degeneracy and 
prove Theorem \ref{LTD_thm main} by examining several typical examples.
Since $\partial_xm(x,t)=b(t)$ in problem \eqref{LTD_eq 1}, the time-periodic parabolic
operator $\mathcal{L}_{\alpha}$  now becomes
$$\mathcal{L}_{\alpha}=\partial_{t}-\partial_{xx}- \alpha b(t)\partial_x+V.$$
We begin with the following result.

\begin{prop}\label{LTD_thm 1} Assume that the $T$-periodic function $b\in C(\mathbb{R})$ satisfies
$$ b(t)\equiv0,\ \ \forall t\in\left[0,t_*\right]\quad \text{and} \quad b(t)>0,\ \
\forall t\in\left(t_*,T\right)\ \, {\text for\  some}\ t_*\in(0,T).$$
Let $\lambda(\alpha)$ be the principal eigenvalue of  \eqref{LTD_eq 1}. Then
$\lambda(\alpha)\to\lambda^*$ as $\alpha\to\infty$,
where $\lambda^*$ is the principal eigenvalue of the  problem
\begin{equation}\label{LTD_auxi 1}
\begin{cases}
\partial_{t}\psi-\partial_{xx}\psi+V\psi=\lambda\psi &{\mathrm{in}}\,\,(0,1)\times(0,t_*],\\
\partial_{t}\psi+V(1,t)\psi=\lambda \psi &{\mathrm{on}}\,\,(t_*,T],\\
\psi(x,t_*^+)\equiv\psi(1,t_*^{-}) &{\mathrm{on}}\,\,(0,1),\\
\partial_{x}\psi(0,t)=\partial_{x}\psi(1,t)=0& {\mathrm{on}}\,\,[0,T],\\
\psi(x,0)=\psi(x,T) &{\mathrm{on}}\,\,(0,1).
\end{cases}
\end{equation}
\end{prop}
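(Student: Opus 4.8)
The plan is the usual two–sided squeeze: for every small $\epsilon>0$ we show that $\lambda^*-\epsilon\le\lambda(\alpha)\le\lambda^*+\epsilon$ for all sufficiently large $\alpha$, and then let $\epsilon\searrow0$. As in Propositions \ref{ldnlem1}--\ref{ldnlem3}, the lower bound $\liminf_{\alpha\to\infty}\lambda(\alpha)\ge\lambda^*-\epsilon$ will follow from Proposition \ref{appendixprop} once we produce, in the weak sense of Definition \ref{appendixldef}, a positive strict super-solution $\overline\varphi$ of $\mathcal L_\alpha-(\lambda^*-\epsilon)$ for $\alpha$ large; dually, the upper bound will follow from a nonnegative strict sub-solution $\underline\varphi$ of $\mathcal L_\alpha-(\lambda^*+\epsilon)$. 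The raw material is the positive principal eigenfunction $\psi^*$ of \eqref{LTD_auxi 1}, whose existence, simplicity and regularity are obtained as in Propositions \ref{principaleigen} and \ref{principaleigenfunction}; we will use that on $(0,1)\times(t_*,T]$ one has $\psi^*(x,t)\equiv g(t)$ with $g(t)=\psi^*(1,t_*^-)\exp\big(\int_{t_*}^{t}(\lambda^*-V(1,s))\,\mathrm{d}s\big)$, while on $(0,1)\times(0,t_*]$ the function $\psi^*$ evolves parabolically starting from the spatially constant datum $\psi^*(\cdot,0)=g(T)$.

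For the lower bound I would assemble $\overline\varphi$ block by block. On the non-advective block $(0,1)\times(0,t_*]$, where $b\equiv0$ and hence $\mathcal L_\alpha$ is independent of $\alpha$, take $\overline\varphi$ to be a mild perturbation of $\psi^*$: since $\psi^*$ solves $\partial_t\psi^*-\partial_{xx}\psi^*+V\psi^*=\lambda^*\psi^*$ there, one gets $\mathcal L_\alpha\overline\varphi\ge(\lambda^*-\epsilon)\overline\varphi$. On the advective block $(0,1)\times(t_*,T]$ put $\overline\varphi(x,t)=g(t)\Theta(x)$ with $\Theta(x)=1+\beta(1-x)^2$, $0<\beta<\epsilon/4$; using $\partial_t g=(\lambda^*-V(1,t))g$, the required inequality collapses to $\big(V(x,t)-V(1,t)+\epsilon\big)\big(1+\beta(1-x)^2\big)-2\beta+2\alpha b(t)\beta(1-x)\ge0$. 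Where $x$ is close to $1$ this holds for every $\alpha$, by continuity of $V$; where $1-x$ is bounded away from $0$ it holds provided $\alpha b(t)$ is large, since the nonnegative drift term $2\alpha b(t)\beta(1-x)$ then swallows the bounded remainder. The two blocks are matched across the curve $\{(x,t_*):0<x<1\}\subset\mathbb T$, and across the $x$-interface where $\Theta$ is glued to the outer profile, by arranging the jumps permitted by Definition \ref{appendixldef} (an upward jump of $\overline\varphi$, or a downward jump of $\partial_t\overline\varphi$, at $\mathbb T$; convex corners $\partial_x\overline\varphi(x^+,t)<\partial_x\overline\varphi(x^-,t)$ at $\mathbb X$). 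The residual $O(\beta)$ mismatch in the periodicity condition $\overline\varphi(\cdot,0)=\overline\varphi(\cdot,T)$ is removed by replacing $\psi^*$ throughout with the eigenfunction of a $\beta$-perturbed version of \eqref{LTD_auxi 1}, whose principal eigenvalue tends to $\lambda^*$ as $\beta\searrow0$ --- the analogue of the $\eta$-perturbation used in Propositions \ref{ldnlem2} and \ref{ldnlem3}.

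The principal difficulty --- and the reason the temporally degenerate case demands more care than Theorems \ref{ldnthm1}--\ref{ldnthm2} --- is that $b$ degenerates at \emph{both} ends of the advective interval: $b(t_*^+)=b(T^-)=0$ by continuity and $T$-periodicity. Consequently, on the full-width strips $\{1-x\ge\delta_\epsilon\}\times\big((t_*,t_*+\delta']\cup(T-\delta',T)\big)$ the product $\alpha b(t)$ cannot be made large at a fixed $\alpha$, and the mechanism just described breaks down precisely on the set where $V(x,t)<V(1,t)-\tfrac\epsilon2$; note this deficiency is genuine --- there $V(x,t)-V(1,t)$ has no favourable sign and the drift is too weak to compensate. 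I would repair this by handling these two short transitional time-layers \emph{without} invoking the drift at all: on $(t_*,t_*+\delta']$ let $\overline\varphi$ be non-increasing and concave in $x$ (so that $-\alpha b(t)\partial_x\overline\varphi\ge0$ and $-\partial_{xx}\overline\varphi\ge0$ for every $\alpha$) and decay in $t$ no faster than the exponential rate $\|V\|_\infty+|\lambda^*|$, interpolating from the parabolic profile at $t=t_*^+$ to a spatially constant profile at $t=t_*+\delta'$; symmetrically re-open the collapsed profile on $(T-\delta',T)$ back to the constant datum needed at $t=0$. Sending $\alpha\to\infty$ first and then $\delta'\searrow0$ makes the contribution of these layers vanish and recovers the auxiliary problem \eqref{LTD_auxi 1}. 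Verifying that every spatial discontinuity so created is a convex corner and every time discontinuity is of the type permitted by Definition \ref{appendixldef}, and that the limit of the thus-modified eigenvalues is indeed $\lambda^*$, is the technical heart of the argument and is where the more delicate analysis lies.

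The upper bound is obtained by the mirror construction: take $\underline\varphi$ to be a mild perturbation of $\psi^*$ on the parabolic block, and on the advective block $\underline\varphi(x,t)=g(t)\zeta(x)$ with a fixed profile $\zeta>0$ shaped so that the signs of $-\partial_{xx}\underline\varphi$ and $-\alpha b(t)\partial_x\underline\varphi$ are favourable for a sub-solution (rising convexly along the skirt, rounded off near $x=1$ with $\zeta'(1)=0$, absorbing the potential gap $V(x,t)-V(1,t)-\epsilon<-\tfrac\epsilon2$ near $x=1$), with concave corners $\partial_x\underline\varphi(x^+,t)>\partial_x\underline\varphi(x^-,t)$ at $\mathbb X$ and the same $\beta$-perturbation of \eqref{LTD_auxi 1} to reconcile periodicity; the degenerate strips near $t_*$ and $T$ are again treated by the dual transitional layers of the previous paragraph. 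Proposition \ref{appendixprop} then yields $\lambda^*-\epsilon\le\liminf_{\alpha\to\infty}\lambda(\alpha)\le\limsup_{\alpha\to\infty}\lambda(\alpha)\le\lambda^*+\epsilon$ after letting $\delta'\searrow0$ and $\beta\searrow0$, and $\epsilon\searrow0$ finishes the proof.
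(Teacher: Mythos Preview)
Your overall strategy is the paper's: squeeze $\lambda(\alpha)$ between $\lambda^*\pm\epsilon$ by building super/sub-solutions out of the eigenfunction $\psi^*$ of \eqref{LTD_auxi 1}, and you correctly isolate the real difficulty --- the degeneracy $b(t_*^+)=b(T^-)=0$ kills the drift on short time-layers adjacent to $t_*$ and $T$. But your proposed repair of those layers does not close.

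The gap is quantitative. At $t=t_*$ the super-solution must jump upward from the parabolic profile $\psi^*(\cdot,t_*^-)$; any non-increasing concave profile dominating it has minimum (at $x=1$) at least $\psi^*(1,t_*^-)$ and maximum (at $x=0$) at least $\max_x\psi^*(x,t_*^-)$. On the bulk advective block you take $\overline\varphi=g(t)\Theta(x)$ with $\Theta=1+\beta(1-x)^2$ and $\beta<\epsilon/4$ small, so at $t_*+\delta'$ its minimum is $g(t_*+\delta')\approx\psi^*(1,t_*^-)$. For the super-solution jump at $t_*+\delta'$ to be \emph{upward} you therefore need the transitional profile to have dropped, over time $\delta'$, from a value $\gtrsim\max_x\psi^*(x,t_*^-)$ down to $\lesssim\psi^*(1,t_*^-)$. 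That is an $O(1)$ decrease (the ratio $\max_x\psi^*(\cdot,t_*^-)/\psi^*(1,t_*^-)$ is fixed by $V$, not by $\epsilon$ or $\delta'$), which is incompatible with your constraint ``decay in $t$ no faster than $\|V\|_\infty+|\lambda^*|$'' once $\delta'$ is small. The ``$O(\beta)$ mismatch'' you plan to absorb by perturbing \eqref{LTD_auxi 1} is really an $O(1)$ mismatch, and an $\eta$-type perturbation does not remove it.

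The paper's fix is to go the other way: write $\overline\varphi=\gamma(t)\,\beta(x,t)\,\psi^*(x,t)$ with a purely temporal factor $\gamma$ that \emph{grows fast} on the transitional layers. On $(t_*,t_*+\delta]$ one takes $\gamma'/\gamma=M_\gamma\sim 1/\delta$; the term $\gamma'\beta\psi^*$ then single-handedly dominates the bad potential term $\big(V(x,t)-V(1,t)\big)\beta\psi^*$ (no drift needed), at the cost of inflating $\overline\varphi$ by a bounded factor. That inflation is paid back by letting $\gamma$ decay at rate $O(\epsilon)$ across the long bulk region $[t_*+\delta,T-\delta]$ and grow again on $[T-\delta,T]$, with the constants tuned so that $\gamma(0)=\gamma(T)$ and the total cost in the eigenvalue estimate is $O(\epsilon)$. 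The spatial factor $\beta(x,t)$ (with $\partial_x\beta\le0$, $\beta(1,\cdot)=1$, and $\beta$ allowed to be \emph{large} away from $x=1$, not $1+O(\epsilon)$) is chosen so that $\overline\varphi(x,t_*^+)>\overline\varphi(x,t_*^-)$ holds directly; no perturbation of the auxiliary problem is used. Replacing your bounded-decay transitional layers by this fast-growth $\gamma$ mechanism is the missing ingredient.
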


\begin{remark}\label{rek4.1}
{\rm The existence and uniqueness of the principal eigenvalue $\lambda^*$ for problem \eqref{LTD_auxi 1} is proved in Proposition {\rm\ref{principaleigen}}.
Denote by $\psi^*>0$ the corresponding principal  eigenfunction. We mention that $\psi^*(t,\cdot)$ is constant
for any fixed $t\in\left(t_*,T\right]$, and  $\psi^*$ is not necessarily continuous
at  $t=t_*$ 
with respect to $t$, while it is differentiable elsewhere.
}
\end{remark}

\begin{proof}[Proof of Proposition {\rm \ref{LTD_thm 1}}]
We first prove  $\liminf\limits_{\alpha\rightarrow\infty}\lambda(\alpha)\geq \lambda^*$.
Given any $\epsilon>0$, we shall construct a  function $\overline{\varphi}>0$  such that for sufficiently large $\alpha$,
\begin{equation}\label{LTD1}
\begin{cases}
\mathcal{L}_{\alpha}\overline{\varphi}
\geq(\lambda^*-\epsilon)\overline{\varphi}\ \ &{\text{in}}\,\,(0,1)\times\left((0,T)\setminus\{t_*,t_*+\delta\}\right),\\
 \partial_{x}\overline{\varphi}(0,t)\leq0, \ \ \partial_{x}\overline{\varphi}(1,t)\geq0  &{\text{in}}\,\,[0,T], \\
 \overline{\varphi}(x,t_*^+)>\overline{\varphi}(x,t_*^-)  & {\text{on}}\,\,(0,1), \\
  \partial_t\overline{\varphi}(x,  (t_*+\delta)^-)>\partial_t\overline{\varphi}(x,(t_*+\delta)^+)  & {\text{on}}\,\,(0,1), \\
 \overline{\varphi}(x,0)=\overline{\varphi}(x,T) &{\text{in}}\,\,[0,1], \\
 \end{cases}
\end{equation}
where the constant $\delta>0$ will be determined later. Then such function  $\overline{\varphi}$ is
 a strict super-solution in the sense of Definition \ref{appendixldef} 
 with
 $\mathbb{X}=\emptyset$ and $\mathbb{T}=(0,1)\times\{t_*,  t_*+\delta\}$.
 We may apply  Proposition \ref{appendixprop} to conclude $\liminf\limits_{\alpha\rightarrow\infty}\lambda(\alpha)\geq \lambda^*$.
For this purpose, we define
\begin{equation}\label{liu005}
    \overline{\varphi}(x,t):=\gamma(t)\beta(x,t)\psi^*(x,t).
\end{equation}
Here $\beta(x,t)>0$ is a $T$-periodic function such that $\beta\in C^1([0,1]\times([0,t_*)\cup(t_*,T])$ and
$$\begin{cases}
\beta=1& \mathrm{in}~ [0,1]\times[0,t_*),\\
\beta=1, \,\, \partial_x\beta=0 & \mathrm{on}~ \{1\}\times(t_*,T],\\
\partial_x\beta<0 &\mathrm{in}~ [0,1)\times(t_*,T).
\end{cases}$$
Note that the chosen $\beta$ may not be continuous at $t=t_*$.

Since $\partial_x\psi^*(1,t)\equiv0$ holds in the neighborhood of $t=t_*$,
we can require $\partial_{xx}\beta(1,t_*^+)$ to be sufficiently large such that
$\overline{\varphi}(x,t_*^+)>\overline{\varphi}(x,t_*^-)$ holds in the neighborhood of $x=1$.
Thus, we can always find the appropriate function $\beta$ and constant $M_\beta>0$ such that
$$\begin{cases}
\smallskip
\overline{\varphi}(x,t_*^+)>\overline{\varphi}(x,t_*^-) &\text{ on } [0,1),\\
|\partial_t\beta|,\ |\partial_x\beta|,\ |\partial_{xx}\beta|<M_\beta\beta &\text{ in } [0,1]\times[0,T].
\end{cases}$$

Then we  select the function $\gamma:\ [0,T]\mapsto(0,\infty)$  to be continuous and satisfy
\begin{equation}\label{LTD2}
\gamma(0)=\gamma(T)=1,\quad \gamma'\geq0\ \,\mathrm{on}\,\left[0,t_*+\delta\right) \cup (T-\delta,T), \quad \text{and} \quad
\gamma'<0\,\, \mathrm{ on}~\left(t_*+\delta,T-\delta\right),
\end{equation}
and the following:
\begin{equation}\label{LTD2.1}
    \begin{cases}
    \gamma'(0^+)=0, \,\,\gamma'(T^-)=M_\gamma,\\
    \gamma'(t_*^-)>\gamma'(t_*^+),\\
\gamma'((t_*+\delta)^-)>0>\gamma'((t_*+\delta)^+),\\
\gamma'(T-\delta)=0.
\end{cases}
\end{equation}
Here the positive constants $\delta$ and $M_\gamma$ are determined as follows:

For small $0<\delta<(T-t_*)/2$, we set
\begin{equation}\label{definM}
   M_\gamma:=\frac{2\ln(1+\epsilon)}{3\delta} \,\,\text{ and }\,\frac{1}{Q}:=\frac{8\ln{(1+\epsilon)}}{\epsilon(T-t_*)}.
\end{equation}
Letting $\delta$ be small enough if necessary so that the followings hold:
\begin{itemize}
  \item[{\rm{(i)}}] $M_\gamma\geq2M_\beta+\max\limits_{[0,1]\times[0,T]}\left[V(1,t)-V(x,t)\right];$
    \smallskip
  \item[{\rm{(ii)}}] $|V(x,t)-V(1,t)|<\frac{\epsilon}{3}$ in $[1-2\delta,1]\times[0,T];$
  \medskip
  \item[{\rm{(iii)}}] 
$3\frac{M_\gamma\delta}{\epsilon}+\frac{2\delta}{Q}<\frac{3(T-t_*)}{4Q}.$
\end{itemize}
We will specify the function $\gamma$ later, whose profile can be exhibited in Fig. \ref{figureb1}.
\begin{figure}[http!!]
  \centering
\includegraphics[height=1.5in]{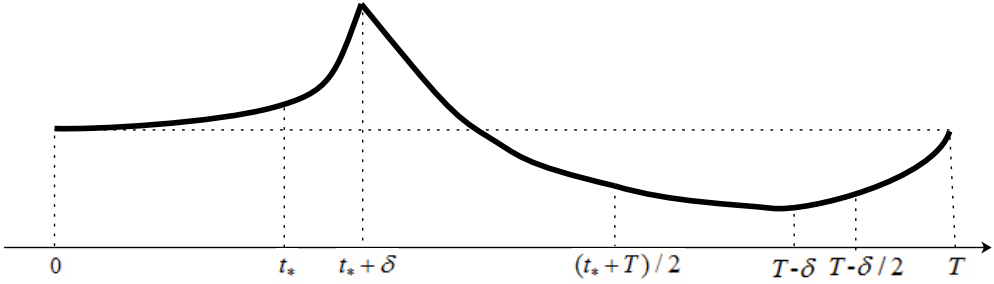}
  \caption{\small  Profile of the function $\gamma$, which is illustrated by the black solid curve.}\label{figureb1}
  \end{figure}

To verify that $\overline{\varphi}$ defined above is a super-solution satisfying \eqref{LTD1}, it remains to verify $\mathcal{L}_\alpha\overline{\varphi}\geq(\lambda^*-\epsilon)\overline{\varphi}$
in $(0,1)\times\left((0,T)\setminus\{t_*,t_*+\delta\}\right)$. To this end,
we divide the time interval $[0,T]$ into four parts:
$\left[0,t_*\right]$, $\left[t_*,t_*+\delta\right]$, $\left[t_*+\delta,T-\delta\right]$, and $[T-\delta,T]$.

\smallskip

\noindent{\bf Part 1.} For $t\in \left[0,t_*\right]$, we choose $\gamma$ to satisfy
\begin{equation}\label{gamma_1}
    \gamma'(t)\geq0, \quad \gamma(0)=1,\quad \gamma'(0^+)=0,\quad \gamma(t_*)=1+\epsilon.
\end{equation}
In view of $b(t)=0$, $\beta(x,t)=1$,  and
$\gamma'(t)\geq0\,\text{ in } \,[0,1]\times\left[0,t_*\right]$, we use \eqref{liu005} to calculate that
\begin{align*}
\mathcal{L}_\alpha\overline{\varphi}&=\gamma\partial_t\psi^*
+\gamma'\psi^*-\gamma\partial_{xx}\psi^*+\gamma V(x,t)\psi^*
\\&\geq\gamma(\partial_t\psi^*-\partial_{xx}\psi^*+V(x,t)\psi^*)\\
&=\lambda^*\gamma\psi^*>(\lambda^*-\epsilon)\overline{\varphi},\ \ \forall (x,t)\in[0,1]\times[0,t_*],
\end{align*}
where the second equality is due to  the definition of $\psi^*$ in \eqref{LTD_auxi 1}.

\smallskip

\noindent{\bf Part 2.} For $t\in\left(t_*,t_*+\delta\right]$,  we define
 \begin{equation}\label{gamma_2}
\begin{array}{l}
\gamma(t):=(1+\epsilon)e^{M_\gamma\left(t-t_*\right)},
\end{array}
\end{equation}
where $M_\gamma$ is defined in \eqref{definM}.
 Recall that for any fixed
 $t\in\left[t_*,T\right]$, $\psi^*(\cdot,t)$ is a constant  as noted in Remark \ref{rek4.1}.  By direct calculation we have
$$
\mathcal{L}_\alpha\overline{\varphi}= \gamma\beta\partial_t\psi^*+V(1,t)\gamma\beta\psi^*+\gamma'\beta\psi^*+[\partial_t\beta-\partial_{xx}\beta-\alpha b\partial_x\beta+V(x,t)\beta-V(1,t)\beta]\gamma\psi^*.
$$
Since $\partial_x\beta\leq 0$ and $|\partial_t\beta|,|\partial_x\beta|,|\partial_{xx}\beta|<M_\beta\beta$
on $[0,1]\times(t_*,t_*+\delta]$,  we deduce from (i) that
\begin{align*}
\mathcal{L}_{\alpha}\overline{\varphi}&\geq \lambda^*\gamma\beta\psi^*+\gamma'\beta\psi^*+[\partial_t\beta-\partial_{xx}\beta+V(x,t)\beta-V(1,t)\beta]\gamma\psi^*\\
&\geq \lambda^*\overline{\varphi}+\gamma'\beta\psi^*+[-2M_\beta+V(x,t)-V(1,t)]\gamma\beta\psi^*\\
&=[\lambda^*-2M_\beta+V(x,t)-V(1,t)+M_\gamma]\overline{\varphi}\\
&\geq (\lambda^*-\epsilon)\overline{\varphi},\ \ \forall (x,t)\in[0,1]\times(t_*,t_*+\delta].
\end{align*}

\smallskip

\noindent{\bf Part 3.} For $t\in\left[t_*+\delta,T-\delta\right]$, we set
\begin{equation}\label{LTD_3}
    \begin{cases}
\gamma(t)=(1+\epsilon)e^{M_\gamma\delta}e^{-\frac{\epsilon}{Q}(t-t_*-\delta)}&\text{ on }\,\,\left[t_*+\delta,\frac{T+t_*}{2}\right],\\
-\frac{2\epsilon\gamma}{Q}<\gamma'(t)<0, &\text{ on }\,\,\left(\frac{T+t_*}{2},T-\delta\right),\\
\gamma'(T-\delta)=0,
\end{cases}
\end{equation}
where $M_\gamma$ and $Q$ are defined in \eqref{definM}.
We then calculate that
\begin{equation}\label{LTD_3.1}
\begin{split}
\mathcal{L}_{\alpha}\overline{\varphi}=\lambda^*\gamma\beta\psi^*+\gamma'\beta\psi^*+[\partial_t\beta-\partial_{xx}\beta-\alpha b\partial_x\beta+V(x,t)\beta-V(1,t)\beta]\gamma\psi^*.
\end{split}
\end{equation}
Note that $\beta=1$ on  $\{1\}\times\left[t_*+\delta,T\right]$. By taking $\delta$ small if necessary, we may assume
\begin{equation}\label{condition_1}
 |\partial_t\beta|+|\partial_{xx}\beta|<\tfrac{\epsilon}{3}\beta \quad \text{on}\
 [1-2\delta,1]\times\left[t_*+\delta,T\right].
\end{equation}

For $(x,t)\in[1-2\delta,1]\times\left[t_*+\delta,T-\delta\right]$, in view of $\partial_x\beta\leq0$ and (ii), it follows from  \eqref{LTD_3.1} that
\begin{align*}
\mathcal{L}_{\alpha}\overline{\varphi}\geq \lambda^*\overline{\varphi}+\gamma'\beta\psi^*-\epsilon\gamma\beta\psi^*.
\end{align*}
By the choice of $\gamma$ in \eqref{LTD_3}, we arrive at
\begin{align*}
\mathcal{L}_{\alpha}\overline{\varphi}\geq\left(\lambda^*-\tfrac{(Q+2)\epsilon}{Q}\right)\overline{\varphi}.
\end{align*}

 For $(x,t)\in\left[0,1-2\delta\right]\times\left[t_*+\delta,T-\delta\right]$,
 by the conditions on $\beta$ and $b$, clearly there exists some constant $\rho>0$ independent of  $(x,t)$ such that $b(t)\partial_x\beta(x,t)<-\rho$, whence by \eqref{LTD_3.1},
\begin{equation*}
\begin{array}{l}
\mathcal{L}_{\alpha}\overline{\varphi}\geq \left[\lambda^*\beta-\frac{2}{Q}\epsilon\beta-2M_\beta \beta+V(x,t)\beta-V(1,t)\beta
+\alpha\rho\right]\gamma\psi^*. 
\end{array}
\end{equation*}
By choosing $\alpha$  large, we deduce $\mathcal{L}_{\alpha}\overline{\varphi}\geq (\lambda^*-\epsilon)\overline{\varphi}$ as desired.

\smallskip
\noindent{\bf Part 4.} For $t\in \left[T-\delta,T\right]$, it can be observed from  \eqref{definM}  and {\rm (iii)} that
\begin{equation*}
\gamma\left(\tfrac{T+t_*}{2}\right)=(1+\epsilon)e^{M_\gamma\delta}e^{-\frac{\epsilon}{Q}
\left(\frac{T-t_*}{2}-\delta\right)}<e^{-\frac{M_\gamma\delta}{2}}=\gamma\left(T-\delta\right).
\end{equation*}
This allows us to define $\gamma$ as a smooth function on $(T-\delta,T]$ such that
\begin{equation}\label{LTD_4}
\gamma'\geq0\ \ \text{ on } (T-\delta,T-\tfrac{\delta}{2})\quad \text{and}\quad \gamma=e^{M_\gamma(t-T)} \,\, \text{ on } [T-\tfrac{\delta}{2},T].
\end{equation}
To verify $\mathcal{L}_{\alpha}\overline{\varphi}\geq(\lambda^*-\epsilon)\overline{\varphi},$
we consider the following different regions.

For $(x,t)\in\left[1-2\delta,1\right]\times\left[T-\delta,T\right]$, by means of the facts $|\partial_t\beta|,|\partial_{xx}\beta|<\frac{\epsilon}{4}\beta$ in \eqref{condition_1}, $\partial_x\beta\leq0$ and $\gamma'(t)\geq0$,
we deduce from \eqref{LTD_3.1} and {\rm(ii)} that
\begin{equation*}
\begin{array}{l}
\medskip
\mathcal{L}_{\alpha}\overline{\varphi}
\geq \lambda^*\gamma\beta\psi^*-\epsilon \gamma\beta\psi^*= (\lambda^*-\epsilon)\overline{\varphi}.
\end{array}
\end{equation*}

For $(x,t)\in\left[0,1-2\delta\right]\times\left[T-\frac{\delta}{2},T\right]$,
in view of $\partial_x\beta<0$ and $\gamma=e^{M_\gamma(t-T)}$, it follows from \eqref{LTD_3.1} and {\rm(i)} that
\begin{align*}
\mathcal{L}_\alpha\overline{\varphi}&\geq\lambda^*\gamma\beta\psi^*+\gamma'\beta\psi^*
+\left[\partial_t\beta-\partial_{xx}\beta+V(x,t)\beta-V(1,t)\beta\right]\gamma\psi^*\\
&\geq \left[\lambda^*-2M_\beta+V(x,t)-V(1,t)+M_\gamma\right]\overline{\varphi}\geq (\lambda^*-\epsilon)\overline{\varphi}.
\end{align*}

For $(x,t)\in\left[0,1-2\delta\right]\times\left[T-\delta,T-\frac{\delta}{2}\right]$,
there exists some constant $\rho>0$ independent of $(x,t)$ such that $b(t)\partial_x\beta(x,t)<-\rho$ and $\gamma'>0$.
Noting that $|\partial_t\beta|,|\partial_{xx}\beta|<M_\beta\beta$, by choosing $\alpha$ to be large enough,  we may use \eqref{LTD_3.1} 
to  obtain
\begin{align*}
\mathcal{L}_\alpha\overline{\varphi}
&\geq \left[\lambda^*\beta-2M_\beta\beta+V(x,t)\beta-V(1,t)\beta+\alpha\rho\right]\gamma\psi^*\geq (\lambda^*-\epsilon)\overline{\varphi}.
\end{align*}

 By now, we have specified the function $\gamma$ through \eqref{gamma_1}, \eqref{gamma_2},
 \eqref{LTD_3} and \eqref{LTD_4}, which satisfies \eqref{LTD2} and \eqref{LTD2.1}.
 Therefore, the super-solution $\overline{\varphi}$ constructed above satisfies \eqref{LTD1}, and thus $\liminf\limits_{\alpha\rightarrow\infty}\lambda(\alpha)\geq \lambda^*$ is established.

The proof of  $\limsup\limits_{\alpha\rightarrow\infty}\lambda\leq \lambda^*$ is similar, which amounts to construct a sub-solution $\underline{\varphi}$ such that
\begin{equation}\label{LTDsub}
\begin{cases}
 \mathcal{L}_{\alpha}\underline{\varphi}\leq(\lambda^*+\epsilon)\underline{\varphi} &{\text{in}}\,\,(0,1)\times((0,T)\setminus\{t_*,t_*+\delta\}),\\
 \partial_{x}\underline{\varphi}(0,t)\geq0, \ \ \partial_{x}\underline{\varphi}(1,t)\leq0  &{\text{on}}\,\,[0,T],\\
   \underline{\varphi}(x,t_*^+)<\underline{\varphi}(x,t_*^-)  & {\text{on}}\,\,(0,1), \\
    \partial_t\overline{\varphi}(x,(t_*+\delta)^-)<\partial_t\overline{\varphi}(x,(t_*+\delta)^+)  & {\text{on}}\,\,(0,1), \\
 \underline{\varphi}(x,0)=\underline{\varphi}(x,T) &{\text{on}}\,\,(0,1),
 \end{cases}
\end{equation}
where $\delta$ is given as before. Indeed, we can define
\begin{equation*}
\underline{\varphi}(x,t):=\frac{\underline{\beta}(x,t)\psi^*}{\gamma(t)},
\end{equation*}
where $\gamma$ is defined as above and $\underline{\beta}>0$ has the same properties
as $\beta$ except that $\partial_x \underline\beta>0$ on $ [0,1)\times(t_*,T)$.
 Proceeding as before, we can verify such a function $\underline{\varphi}$
 is a strict sub-solution satisfying \eqref{LTDsub}.
Applying Proposition \ref{appendixprop} again,  we derive
$\limsup_{\alpha\rightarrow\infty}\lambda(\alpha)\leq \lambda^*$. The proof is now completed.
\end{proof}

Next, we consider another typical function $b$.
\begin{prop}\label{LTD_thm 2}  Assume that the $T$-periodic function $b\in C(\mathbb{R})$ satisfies
$$
b(t)>0 \text{ in }(0,t_*), \quad b(t)<0 \text{ in }(t_*,T), \quad
\text{and} \quad b(0)=b\left(t_*\right)=b(T)=0 \quad {\text for\  some}\ t_*\in(0,T).$$
Let $\lambda(\alpha)$ be the principal eigenvalue of  \eqref{LTD_eq 1}. Then
$$\lim_{\alpha\rightarrow\infty}\lambda(\alpha)=\frac{1}{T}
\left[\int^{t_*}_0V(1,s)\mathrm{d}s+\int^T_{t_*}V(0,s)\mathrm{d}s\right].$$
\end{prop}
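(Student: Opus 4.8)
The plan is to adapt the two-step super-/sub-solution argument used for Proposition~\ref{LTD_thm 1}. First observe that this is the instance $N=1$, $t_0=0$, $t_1=t_*$, $t_2=T$ of Theorem~\ref{LTD_thm main}, with $\mathbb{C}=\{0\}$ (since $b>0$ on $(0,t_*)$), $\mathbb{A}=\{1\}$ (since $b<0$ on $(t_*,T)$) and $\mathbb{B}=\emptyset$. Because $\mathbb{B}=\emptyset$, the auxiliary problem \eqref{LTD_auxi main} reduces to a pair of linear first-order ODEs in $t$ with spatial resets at $t=0$ and $t=t_*$; integrating them explicitly and imposing periodicity forces its principal eigenvalue to be exactly $\lambda_\infty:=\frac{1}{T}\big[\int_0^{t_*}V(1,s)\,\mathrm{d}s+\int_{t_*}^T V(0,s)\,\mathrm{d}s\big]$ (this is the computation already recorded for $\mathbb{B}=\emptyset$ in the discussion following Theorem~\ref{LTD_thm main}, and uniqueness of the limiting principal eigenvalue is immediate from the explicit formula for its eigenfunction). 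Hence it suffices to prove $\liminf_{\alpha\to\infty}\lambda(\alpha)\ge\lambda_\infty$ and $\limsup_{\alpha\to\infty}\lambda(\alpha)\le\lambda_\infty$ and then invoke Proposition~\ref{appendixprop}.

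For the lower bound, fix $\epsilon>0$ and build a positive strict super-solution $\overline{\varphi}(x,t)=\gamma(t)\beta(x,t)$ in the sense of Definition~\ref{appendixldef}, with $\mathcal{L}_\alpha\overline{\varphi}\ge(\lambda_\infty-\epsilon)\overline{\varphi}$ off finitely many time-slices for all large $\alpha$. On $[0,t_*]$ I would take $\beta(\cdot,t)$ strictly decreasing in $x$ with $\partial_x\beta(1,\cdot)=0$, $\beta(1,\cdot)=1$, and $\beta=1+(x-1)^2/M$ near $x=1$ with $M\gg\epsilon^{-1}$; since $b>0$ there, the advection term $-\alpha b\,\partial_x\overline{\varphi}\ge0$, and if $\gamma$ solves $\gamma'/\gamma=\lambda_\infty-\tfrac{\epsilon}{2}-V(1,t)$ on the bulk of $(0,t_*)$, then (after dividing by $\gamma$) the residual is $\beta\big(\tfrac{\epsilon}{2}+V(x,t)-V(1,t)\big)+\partial_t\beta-\partial_{xx}\beta-\alpha b\,\partial_x\beta$, which is $\ge0$ near $x=1$ once the neighbourhood is shrunk and $M$ enlarged (exactly as in step $\mathbf{(1)}$ of Proposition~\ref{ldnlem1}), while for $x$ bounded away from $1$ and $t$ in a compact subinterval of $(0,t_*)$ on which $b\ge\rho>0$ the term $-\alpha b\,\partial_x\beta$ dominates everything else for $\alpha$ large. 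Symmetrically, on $[t_*,T]$ take $\beta(\cdot,t)$ strictly increasing in $x$, concentrated at $x=0$ with $\beta(0,\cdot)=1$, $\partial_x\beta(0,\cdot)=0$, so that again $-\alpha b\,\partial_x\overline{\varphi}\ge0$ (now $b<0$), and $\gamma'/\gamma=\lambda_\infty-\tfrac{\epsilon}{2}-V(0,t)$ on the bulk. The Neumann boundary inequalities and $\overline{\varphi}(x,0)\ge\overline{\varphi}(x,T)$ are read off from these choices.

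The delicate part is the two transition times $t=t_*$ and $t\equiv0\ (\mathrm{mod}\ T)$, where $b$ vanishes and the concentration point switches between $x=1$ and $x=0$, so the advection is unavailable and $\beta$ must reverse its $x$-monotonicity. As in Part~2 of the proof of Proposition~\ref{LTD_thm 1}, I would place a short time-window straddling each transition on which $\gamma$ grows like $\gamma=\mathrm{const}\cdot e^{M_\gamma t}$ with $M_\gamma\ge 2M_\beta+\|V\|_\infty+|\lambda_\infty|$ (here $M_\beta$ bounds $|\partial_t\beta|/\beta$, $|\partial_x\beta|/\beta$, $|\partial_{xx}\beta|/\beta$); inside each window $\beta$ is rotated so that $b(t)\,\partial_x\beta\le0$ persists on both halves (using only the continuity and sign pattern of $b$), whence $-\alpha b\,\partial_x\overline{\varphi}\ge0$ stays true, and the exponential gain of $\gamma$ across the window is made large enough to produce the strict inequality at the transition time demanded by condition~(3) of Definition~\ref{appendixldef}. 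These two gains are compensated by a slow exponential decay of $\gamma$, at rate $\sim\epsilon/Q$, on the bulk of each interval, so that $\overline{\varphi}$ is genuinely $T$-periodic; the calibration of the window width $\delta$, the rate $M_\gamma\sim\delta^{-1}\ln(1+\epsilon)$, and the constant $Q$ is precisely constraints (i)--(iii) of Proposition~\ref{LTD_thm 1}, now carried out around two transitions rather than one. Letting $\delta\to0$ and then $\epsilon\to0$ gives $\liminf_{\alpha\to\infty}\lambda(\alpha)\ge\lambda_\infty$.

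The upper bound is dual: construct a nonnegative strict sub-solution $\underline{\varphi}(x,t)=\beta(x,t)/\gamma(t)$ with the $x$-monotonicity of $\beta$ reversed on each interval (so that $-\alpha b\,\partial_x\underline{\varphi}\le0$) and the reversed jump inequalities, obtaining $\mathcal{L}_\alpha\underline{\varphi}\le(\lambda_\infty+\epsilon)\underline{\varphi}$, and then apply Proposition~\ref{appendixprop}. The main obstacle, as already for Proposition~\ref{LTD_thm 1}, lies in the bookkeeping near the degeneracy times: the advection cannot absorb the gap between $V(x,t)$ and the concentration value $V(0,t)$ or $V(1,t)$ for interior $x$, and $\gamma$ may neither grow too fast nor over too wide a window without destroying periodicity; the novelty here is that there are two such transition times and across each the sign of $b$ flips, forcing $\beta$ to rotate its spatial profile in step with $b$ while the two exponential gains of $\gamma$ are balanced against the bulk decay over a single period.
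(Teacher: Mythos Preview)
Your proposal is correct and follows essentially the same route as the paper. The paper writes the super-solution as $\overline{\varphi}=\overline{\gamma}(t)\overline{\beta}(x,t)\psi^{**}(t)$, where $\psi^{**}$ is the eigenfunction of the auxiliary ODE problem \eqref{LTD_auxi eigenfunction}; since $\psi^{**}$ depends on $t$ alone, your choice $\overline{\varphi}=\gamma(t)\beta(x,t)$ with $\gamma'/\gamma=\lambda_\infty-\tfrac{\epsilon}{2}-V(1,t)$ (resp.\ $-V(0,t)$) on the bulk simply absorbs $\psi^{**}$ into $\gamma$, and the remaining ingredients---the sign condition $b(t)\,\partial_x\beta\le0$ on each half, the exponential-growth windows for $\gamma$ near the two transition times $t_*$ and $0\equiv T$, and the calibration $M_\gamma\sim\delta^{-1}\ln(1+\epsilon)$ against a slow bulk decay---match the paper's construction and its appeal to Parts~2--4 of Proposition~\ref{LTD_thm 1}.
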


\begin{proof}  Consider the following auxiliary eigenvalue problem:
\begin{equation}\label{LTD_auxi eigenfunction}
\left\{\begin{array}{ll}
\frac{{\rm d}\psi}{{\rm d} t}+V(1,t)\psi=\lambda \psi &{\text{in}}\,\,\left(0,t_{*}\right],  \\
\frac{{\rm d}\psi}{{\rm d} t}+V(0,t)\psi=\lambda \psi &{\text{in}}\,\,\left(t_*,T\right],\\
 \psi(t_{*}^+)=\psi(t_{*}^-),\\
\psi(0)=\psi(T).
\end{array}\right.
\end{equation}
The existence of the principal eigenvalue, denoted by $\lambda^{**}$, of problem \eqref{LTD_auxi eigenfunction}
is shown in Proposition \ref{principaleigen}. It is easily seen that
 $
 \lambda^{**}=\frac{1}{T}\left[\int^{t_*}_0V(1,s)\mathrm{d}s+\int^T_{t_*}V(0,s)\mathrm{d}s\right].
 $
Define $\psi^{**}(t)>0$ as the corresponding principal eigenfunction of \eqref{LTD_auxi eigenfunction}.
Set
$$\overline{\varphi}(x,t):=\overline{\gamma}(t)\overline{\beta}(x,t)\psi^{**}(t).$$
Here $\overline{\beta}\in C^{1,1}([0,1]\times([0,t_*)\cup(t_*,T]))
$  is chosen to be   $T$-periodic and satisfies
$$\begin{cases}
\overline{\beta}=1,\, \, \partial_x\overline{\beta}=0\ \ &\text{on}~(\{1\}\times(0,t_*))\cup(\{0\}\times(t_*,T)),\\
\partial_x\overline{\beta}<0 &\text{in}~[0,1)\times(0,t_*),\\
\partial_x\overline{\beta}>0 &\text{in}~(0,1]\times(t_*,T),\\
\overline{\beta}=1& \text{on}~[0,1]\times\{t_*\}. 
\end{cases}$$
 Given $\delta>0$ to be determined later, the $T$-periodic function $\overline{\gamma}\in C([0,T])$ is chosen to be positive, smooth except for $t\in\{t_*\pm\delta, t_*\}$, and satisfy the following properties:
\begin{equation*}%
\begin{cases}
\overline{\gamma}'>0\ \ &\text{in}~ \left[0, \delta\right)\cup \left(t_*-\delta, t_*+\delta\right)\cup\left(T-\delta, T\right],\\
\overline{\gamma}'<0&\text{in}~ \left(\delta, t_*-\delta\right)\cup\left(t_*+\delta, T-\delta\right),\\
\overline{\gamma}'=0&\text{on}~\{\delta, T-\delta\},
\end{cases}
\end{equation*}
and
$\overline{\gamma}'(T^-)>\overline{\gamma}'(0^+)$ and
$\overline{\gamma}'(t^-)>\overline{\gamma}'(t^+) $ for $t=t_*\pm \delta, \, t_*$.


Given any $\epsilon>0$, we can find suitable $\delta$, $\overline{\gamma}$ and $\overline{\beta}$ such that the above $\overline{\varphi}$ satisfies
\begin{equation*}
 \begin{cases}
\mathcal{L}_\alpha\overline{\varphi}\geq(\lambda^{**}-\epsilon)\overline{\varphi}\ \ &{\text{in}}\,\,((0,1)\times[0,T])\setminus\mathbb{T},\\
 \overline{\varphi}(x,t_*^+)>\overline{\varphi}(x,t_*^-)  & {\text{on}}\,\,(0,1), \\
  \partial_t\overline{\varphi}(x,t^-)>\partial_t\overline{\varphi}(x,t^+)
 &{\text{on}}\,\, (0,1)\times\{t_*-\delta,\,t_*+\delta\},\\
  \partial_{x}\overline{\varphi}(0,t)\leq0, \ \ \partial_{x}\overline{\varphi}(1,t)\geq0 &{\text{on}}\,\,[0,T], \\
 \overline{\varphi}(x,0)=\overline{\varphi}(x,T) &{\text{on}}\,\,(0,1),
 \end{cases}
\end{equation*}
with $\mathbb{T}=[0,1]\times\{t_*\pm\delta,\,t_*\}$, provided that $\alpha$ is large enough. The process is a straightforward adaptation of the proof presented in  parts $\mathbf{(2)}$-$\mathbf{(4)}$ of Proposition \ref{LTD_thm 1}, and we  omit the details. Consequently, applying Proposition \ref{appendixprop} and letting $\epsilon\searrow 0$ give
$\liminf_{\alpha\rightarrow\infty}\lambda(\alpha)\geq\lambda^{**}.$

The construction of a sub-solution $\underline{\varphi}$ follows from the same arguments as in
 Proposition \ref{LTD_thm 1} and thus
$\limsup_{\alpha\rightarrow\infty}\lambda(\alpha)\leq\lambda^{**}.$  Proposition \ref{LTD_thm 2} is now proved.
\end{proof}

By a similar analysis to  Proposition \ref{LTD_thm 2}, we can also deduce the following result.

\begin{prop}\label{LTD_thm 3} Assume that the $T$-periodic function $b\in C(\mathbb{R})$ satisfies
\begin{equation*}
b(t)>0\ \  \text{ in }\,(0,t_*)\cup(t_*,T) \quad \text{and} \quad b\left(t_*\right)=0 \quad {\text for\  some}\ t_*\in(0,T).
\end{equation*}
Let $\lambda(\alpha)$ be the principal eigenvalue of  \eqref{LTD_eq 1}. Then
$\lambda(\alpha)\to\frac{1}{T}\int^{T}_0V(1,s)\mathrm{d}s$ as $\alpha\to \infty$.
\end{prop}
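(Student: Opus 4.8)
The plan is to follow the strategy of Propositions \ref{LTD_thm 1} and \ref{LTD_thm 2}, the present situation being simpler: since $b>0$ on all of $(0,T)$ except at the single point $t_*$, the limiting eigenfunction is constant in $x$ for \emph{every} $t$, so no jump in the time variable is needed and the limiting eigenvalue is simply $\lambda_\infty:=\frac1T\int_0^T V(1,s)\,\mathrm{d}s$. Accordingly, by Proposition \ref{appendixprop}, it suffices to show that for each small $\epsilon>0$ there are, for all large $\alpha$, a positive strict super-solution of $\mathcal L_\alpha-(\lambda_\infty-\epsilon)$ and a positive strict sub-solution of $\mathcal L_\alpha-(\lambda_\infty+\epsilon)$; these yield $\liminf_{\alpha\to\infty}\lambda(\alpha)\ge\lambda_\infty-\epsilon$ and $\limsup_{\alpha\to\infty}\lambda(\alpha)\le\lambda_\infty+\epsilon$, and letting $\epsilon\searrow0$ finishes the proof. (The same conclusion can also be read off from Theorem \ref{LTD_thm main} with $t_0=0<t_1=t_*<t_2=T$, for which $\mathbb A=\mathbb B=\emptyset$ and $\mathbb C=\{0,1\}$.)

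For the super-solution I would take $\overline\varphi(x,t):=\gamma(t)\beta(x,t)f_1(t)$, where $f_1$ is the positive $T$-periodic function in \eqref{def_f_11} (so that $\partial_t f_1+V(1,t)f_1=\lambda_\infty f_1$), $\beta>0$ is a smooth $T$-periodic function with $\beta(1,t)\equiv1$, $\partial_x\beta(1,t)\equiv0$, $\partial_x\beta<0$ on $[0,1)\times[0,T]$, and $|\partial_t\beta|+|\partial_x\beta|+|\partial_{xx}\beta|<M_\beta\beta$, and moreover so flat near $x=1$ that $|\partial_t\beta|+|\partial_{xx}\beta|<\tfrac\epsilon3\beta$ on $[1-2\delta,1]\times[0,T]$; here $\delta>0$ is chosen small enough that also $|V(x,t)-V(1,t)|<\tfrac\epsilon3$ on $[1-2\delta,1]\times[0,T]$. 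The multiplier $\gamma>0$ is a smooth $T$-periodic function with $\gamma'/\gamma\ge-\tfrac\epsilon3$ on $[0,T]$ and $\gamma'/\gamma\ge M_\gamma$ on $(t_*-\delta,t_*+\delta)$, where $M_\gamma$ is a fixed constant (depending only on $M_\beta,\epsilon,V$) chosen so that $M_\gamma-2M_\beta+V(x,t)-V(1,t)\ge-\epsilon$ on $[0,1]\times[0,T]$; such a $\gamma$ exists provided $\delta$ is taken small relative to $\epsilon$, because the rise of $\log\gamma$ forced on $(t_*-\delta,t_*+\delta)$ is $O(M_\gamma\delta)$ and can be compensated by a slow decay of rate $\le\tfrac\epsilon3$ over the remainder of the period. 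A direct computation using $\partial_t f_1+V(1,t)f_1=\lambda_\infty f_1$ gives
\begin{equation*}
\mathcal L_\alpha\overline\varphi=\Bigl[\lambda_\infty+\frac{\gamma'}{\gamma}+\frac{\partial_t\beta-\partial_{xx}\beta}{\beta}-\alpha b\,\frac{\partial_x\beta}{\beta}+V(x,t)-V(1,t)\Bigr]\overline\varphi,
\end{equation*}
and since $b>0$ and $\partial_x\beta\le0$ the advection term is $\ge0$. I would then check $\mathcal L_\alpha\overline\varphi\ge(\lambda_\infty-\epsilon)\overline\varphi$ on three overlapping regions: (a) on $[0,1]\times(t_*-\delta,t_*+\delta)$, using $\gamma'/\gamma\ge M_\gamma$ together with $|\partial_t\beta|,|\partial_{xx}\beta|<M_\beta\beta$ and dropping the advection term; (b) on $[1-2\delta,1]$ with $t$ away from $t_*$, using $\gamma'/\gamma\ge-\tfrac\epsilon3$, the flatness of $\beta$, and $|V(x,t)-V(1,t)|<\tfrac\epsilon3$; and (c) on $\{x\le1-2\delta\}$ with $t$ away from $t_*$, where $b\,\partial_x\beta\le-\rho<0$ for some $\rho>0$, so that the advection term beats all the remaining ($\alpha$-independent) terms once $\alpha$ is large. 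Because $\beta$ and $\gamma$ are smooth, $\overline\varphi$ is a classical strict super-solution with $\mathbb X=\mathbb T=\emptyset$.

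The sub-solution is constructed dually: $\underline\varphi(x,t):=\underline\beta(x,t)f_1(t)/\gamma(t)$ with the same $\gamma$ and with $\underline\beta>0$ having the same properties as $\beta$ except that $\partial_x\underline\beta>0$ on $[0,1)\times[0,T]$ (and $\partial_x\underline\beta(1,t)\equiv0$); the analogous identity and case analysis, now using $-\alpha b\,\partial_x\underline\beta/\underline\beta\le0$, give $\mathcal L_\alpha\underline\varphi\le(\lambda_\infty+\epsilon)\underline\varphi$ for all large $\alpha$. The main obstacle — exactly as in Propositions \ref{LTD_thm 1} and \ref{LTD_thm 2} — is the strip around $t_*$: there $b$ degenerates, $b(t_*)=0$, so for $t$ in an $\alpha$-dependent shrinking neighborhood of $t_*$ the advection can no longer absorb the lower-order terms $\tfrac{\partial_t\beta-\partial_{xx}\beta}{\beta}+V(x,t)-V(1,t)$; the way around it is that the $\alpha$-independent factor $\gamma$ supplies there a fixed slope $M_\gamma$ large enough to dominate those terms, while remaining $T$-periodic precisely because $\delta$ is chosen small compared with $\epsilon$. (If $b$ should also vanish at $t=0=T$, one treats a neighborhood of the seam in the same way.) Everything else is the routine verification already spelled out in the proof of Proposition \ref{LTD_thm 1}.
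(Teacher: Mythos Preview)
Your proposal is correct and follows essentially the same approach as the paper, which only indicates that Proposition~\ref{LTD_thm 3} is proved ``by a similar analysis to Proposition~\ref{LTD_thm 2}.'' In fact you observe a useful simplification over that template: because $b$ does \emph{not} change sign at $t_*$, the factor $\beta$ can keep $\partial_x\beta<0$ for all $t$ and the periodic $\gamma$ can be taken smooth, so the super- and sub-solutions are classical ($\mathbb X=\mathbb T=\emptyset$), whereas the construction in Proposition~\ref{LTD_thm 2} needed $\overline\beta\equiv1$ at $t=t_*$ and a $\overline\gamma$ with corners. One caveat: your parenthetical remark that the result ``can be read off from Theorem~\ref{LTD_thm main}'' is formally circular in the paper's logical order, since Propositions~\ref{LTD_thm 1}--\ref{LTD_thm 3} are the building blocks for the proof of Theorem~\ref{LTD_thm main}; it is fine as an orienting comment but not as the proof.
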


\begin{remark}\label{re4.a} 
{\rm {\rm(1)} The result parallel to Proposition {\rm\ref{LTD_thm 2}} holds: Let $b$ satisfy
    $$
    b(t)<0 \text{ in }(0,t_*), \,\, b(t)>0 \text{ in }(t_*,T),
    \text{ and } b(0)=b\left(t_*\right)=b(T)=0 \quad \text {for  some} \ t_*\in(0,T).$$
    Then the principal eigenvalue $\lambda(\alpha)$ of \eqref{LTD_eq 1} satisfies
    $$\lim_{\alpha\rightarrow\infty}\lambda(\alpha)=\frac{1}{T}
    \left[\int^{t_*}_0V(0,s)\mathrm{d}s+\int^T_{t_*}V(1,s)\mathrm{d}s\right].$$
   This can be deduced by using the variable change $y=1-x$ in Proposition {\rm\ref{LTD_thm 2}}.    Similarly, we can derive the results parallel to Propositions {\rm\ref{LTD_thm 1}} and {\rm\ref{LTD_thm 3}};

{\rm (2)} In Proposition {\rm\ref{LTD_thm 1}}, the limit value $\lambda^*$ (of the principal eigenvalue as $\alpha\to\infty$) satisfies
  $\lambda^*\to\lambda^{\mathcal{N}\mathcal{N}}\big((0,1)\big)$ as $t_*\nearrow T$ and
  $\lambda^*\to\frac{1}{T}\int^{T}_0V(1,s)\mathrm{d}s$ as $t_*\searrow0$, which correspond to Proposition {\rm \ref{ldnlem2}} and Remark {\rm \ref{rem2.1}}, respectively; 

  {\rm (3)}  Proposition {\rm\ref{LTD_thm 3}} suggests that  when $b\geq\not\equiv0$ or $b\leq\not\equiv0$ on $[0,T]$,
  finitely many isolated critical points of $b$
  will have no effects on the limit of $\lambda(\alpha)$ as $\alpha\rightarrow\infty$.
  }
 \end{remark}

By the ideas developed in the proofs of Propositions \ref{LTD_thm 1} and \ref{LTD_thm 2},
we  can derive Theorem \ref{LTD_thm main}.

\begin{proof}[Proof of Theorem {\rm \ref{LTD_thm main}}]
As before, it suffices to construct the suitable sub-solution and super-solution for \eqref{LTD_eq 1} and apply Proposition \ref{appendixprop}.
We 
shall follow the above arguments 
to construct a super-solution here and a sub-solution can be found similarly.

Let $\psi_\infty>0$ denote the principal eigenfunction of problem \eqref{LTD_auxi main}. Define
\begin{equation}\label{liu019}
 \overline{\varphi}(x,t):=\gamma(t)\beta(x,t)\psi_\infty(x,t).
\end{equation}
 Following the proofs of Propositions \ref{LTD_thm 1} and \ref{LTD_thm 2}, the $T$-periodic function $\beta$ is chosen to satisfy
 $\beta\in C^{1,1}([0,1]\times([0,T]\setminus \{t_i, 1\leq i\leq N\}))$ and
$$\begin{cases}
\beta=1,\partial_x\beta=0, &\mathrm{on}~(\{1\}\times\left\{\left[t_i,t_{i+1}\right],i\in \mathbb{C}\right\})\cup(\{0\}\times\left\{\left[t_i,t_{i+1}\right],i\in \mathbb{A}\right\}),\\
\partial_x\beta<0 &\text{on}~\left[0,1\right)\times\left\{\left[t_i,t_{i+1}\right],i\in \mathbb{C}\right\},\\
\partial_x\beta>0 &\text{on}~\left(0,1\right]\times\left\{\left[t_i,t_{i+1}\right],i\in \mathbb{A}\right\},\\
\beta=1& \text{on}~\left[0,1\right]\times\left(\left\{\left[t_i,t_{i+1}\right],i\in \mathbb{B}\right\}\cup\left\{t_i,i\in \mathbb{A}\cup \mathbb{C}\right\}\right).
\end{cases}$$
The $T$-periodic function $\gamma$ is assumed to be positive, continuous everywhere and satisfy that 
\begin{equation*}
    \begin{cases}
\gamma'(t)\geq0\  & \text{on}\,\left\{\left[t_i,t_{i+1}\right],i\in \mathbb{B}\right\},\\
\gamma'(t)>0 &  \text{on}\,\left\{\left(t_i,t_i+\delta\right) \cup (t_{i+1}-\delta,t_{i+1}),i\in \mathbb{A}\cup \mathbb{C}\right\},\\
\gamma'(t)<0 & \text{on}\,\left\{\left(t_i+\delta,t_{i+1}\right),i\in \mathbb{A}\cup \mathbb{C}\right\},
\end{cases}
\end{equation*}
and that for any $1\leq i\leq N+1$,
\begin{equation*}
\gamma'(t_{i+1}-\delta)=0, \quad \gamma'(t_i^-)>\gamma'(t_i^+),\quad \text{and}\quad
\gamma'((t_i+\delta)^-)>\gamma'((t_i+\delta)^+).
\end{equation*}

Let $\mathbb{T}=(0,1)\times\{t_i, t_i\pm\delta, 1\leq i\leq N \}$. By the similar arguments as in Proposition \ref{LTD_thm 1}, we can piecewise construct suitable  $\gamma$ and $\beta$ on different intervals $[t_i,t_{i+1}]$ ($0\leq i\leq N$) 
such that for small $\epsilon>0$,
 the chosen $\overline{\varphi}$ in \eqref{liu019} satisfies
\begin{equation*}
\begin{cases}
\mathcal{L}_\alpha\overline{\varphi}\geq(\lambda_{\infty}-\epsilon)\overline{\varphi} &{\rm{in}}\,\left((0,1)\times[0,T]\right)\setminus\mathbb{T},\\
 \overline{\varphi}(x,t_i^+)>\overline{\varphi}(x,t_i^-)  & {\text{on}}\,\,(0,1),\, 1\leq i\leq N, \\
  \partial_t\overline{\varphi}(x,t^-)>\partial_t\overline{\varphi}(x,t^+)
 &{\text{on}}\,\, (0,1)\times\{t_i\pm\delta, 1\leq i\leq N \},\\
 \partial_{x}\overline{\varphi}(0,t)\leq0, \ \ \partial_{x}\overline{\varphi}(1,t)\geq0 &{\text{in}}\,[0,T], \\
 \overline{\varphi}(x,0)=\overline{\varphi}(x,T) &{\text{in}}\,(0,1),
 \end{cases}
\end{equation*}
provided that $\alpha$ is sufficiently large. This implies that $\overline{\varphi}$  is a strict super-solution in the sense of Definition \ref{appendixldef}. Therefore, Theorem \ref{LTD_thm main} follows from  Proposition \ref{appendixprop}.
\end{proof}

\appendix

\section{The existence of the principal eigenvalue of \eqref{LTD_auxi main}}\label{appenB}
This section is devoted to the proof of  the existence and uniqueness of the principal eigenvalue for problem \eqref{LTD_auxi main}, which is the limit value for $\lambda(\alpha)$ as $\alpha\to\infty$ under the assumption there.

\begin{prop}\label{principaleigen}
The problem \eqref{LTD_auxi main} admits a unique principal eigenvalue $\lambda_\infty$ that corresponds to an eigenfunction $\psi$ satisfying
\begin{equation}\label{principaleigenfunction}
\tag{A.1}
\left\{
\begin{array}{l}
\medskip
  \psi>0 \,\, \text{ in }\, [0,1]\times[0,T], \\
    \psi\in  C^{2+\sigma,1+\frac{\sigma}{2}}
    \left([0,1]\times([0,T]\setminus\{t_1,\ldots,t_{N+1}\})\right)\,\text{ for some }\sigma\in(0,1).
\end{array}\right.
\end{equation}
 Conversely, if \eqref{LTD_auxi main} has a solution $\psi_1$ satisfying
 problem \eqref{principaleigenfunction}, then necessarily
$\lambda=\lambda_\infty$  and $\psi_1=c\psi$ for some constant $c>0$.
\end{prop}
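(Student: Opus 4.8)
The plan is to reduce Proposition~\ref{principaleigen} to a Krein--Rutman analysis of the period-$T$ evolution operator attached to \eqref{LTD_auxi main}. The key preliminary observation is that, because the factor $e^{-\lambda t}$ depends on $t$ alone, a pair $(\lambda,\psi)$ solves \eqref{LTD_auxi main} \emph{if and only if} $u(x,t):=e^{-\lambda t}\psi(x,t)$ solves the same equations, the same transmission conditions $u(x,t_i^+)\equiv u(0,t_i^-)$ (resp. $u(1,t_i^-)$, resp. $u(x,t_i^-)$) and the same Neumann conditions, but with $\lambda$ replaced by $0$, while the periodicity $\psi(\cdot,0)=\psi(\cdot,T)$ becomes $u(\cdot,0)=e^{\lambda T}u(\cdot,T)$. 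Let $Q\colon C([0,1])\to C([0,1])$ be the evolution operator over one period for the $\lambda=0$ version of \eqref{LTD_auxi main}, i.e. $Q\phi=u(\cdot,T)$ where $u$ is the forward solution with $u(\cdot,0)=\phi$; using $T$-periodicity this period map may equally be based at any time, and when $\mathbb{B}\neq\emptyset$ I would base it at an interior time $t^\ast\in(t_j,t_{j+1})$ of some $\mathbb{B}$-interval. It is well defined since each branch is forward well posed (a linear parabolic problem with Neumann data on the $\mathbb{B}$-intervals; after evaluation at $x=0$ or $x=1$, a scalar linear ODE on the $\mathbb{A}$- and $\mathbb{C}$-intervals), and with this notation $\lambda$ is an eigenvalue of \eqref{LTD_auxi main} precisely when $e^{-\lambda T}$ is an eigenvalue of $Q$, the eigenfunctions being related by $\psi(x,t)=e^{\lambda t}u(x,t)$.

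Next I would establish the structural properties of $Q$. It is \emph{positive}: the parabolic maximum principle on the $\mathbb{B}$-intervals and the positivity of the multipliers $\exp\!\big(-\int V(0,\cdot)\big)$, $\exp\!\big(-\int V(1,\cdot)\big)$ on the $\mathbb{A}$- and $\mathbb{C}$-intervals keep the cone $C([0,1])_+$ invariant. It is \emph{compact}: when $\mathbb{B}\neq\emptyset$ the parabolic pieces are smoothing, and when $\mathbb{B}=\emptyset$ the composition factors through the one-dimensional subspace of constants, so $Q$ has rank at most one. And $r(Q)>0$: applying $Q$ to the constant function $\mathbf 1$, each elementary step sends $\mathbf 1$ to a function that is bounded below by a positive multiple of $\mathbf 1$ (parabolic comparison with the bounded potential $V$ on the $\mathbb{B}$-intervals; an explicit positive constant on the $\mathbb{A}$- and $\mathbb{C}$-intervals), hence $Q^n\mathbf 1\ge c^n\mathbf 1$ for some $c>0$ and therefore $r(Q)\ge c>0$.

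I would then split into two cases. When $\mathbb{B}\neq\emptyset$ and $Q$ is based at $t^\ast\in(t_j,t_{j+1})$, $j\in\mathbb{B}$, one has $Q=S_2\circ R\circ S_1$ where $S_1$ (from $t^\ast$ to $t_{j+1}$) and $S_2$ (from $t_j$ to $t^\ast$) are parabolic solution operators and $R$ is a positive operator; since $S_1\phi>0$ on $[0,1]$ whenever $\phi\ge0$, $\phi\not\equiv0$ (strong parabolic maximum principle with Neumann conditions), and since $R$ and $S_2$ preserve strict positivity (boundary evaluation of a strictly positive function is a positive number), $Q$ is strongly positive and compact; the strong form of the Krein--Rutman theorem then yields that $r:=r(Q)$ is a simple eigenvalue with an eigenfunction $\phi_0>0$ on $[0,1]$ and that no other eigenvalue of $Q$ admits a positive eigenfunction. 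I would put $\lambda_\infty:=-\tfrac1T\ln r$ and define $\psi:=e^{\lambda_\infty t}u$, where $u$ is the forward solution of the $\lambda=0$ problem from $\phi_0$, extended $T$-periodically; then $\psi>0$ on $[0,1]\times[0,T]$ (as $\phi_0>0$ and the evolution preserves strict positivity), $\psi$ is $T$-periodic (because $u(\cdot,t^\ast+T)=Q\phi_0=r\phi_0$ while $e^{\lambda_\infty T}r=1$), and $\psi$ has the regularity \eqref{principaleigenfunction} off $\{t_1,\dots,t_{N+1}\}$ by interior and Neumann-boundary parabolic Schauder estimates on the $\mathbb{B}$-intervals, the $\mathbb{A}$- and $\mathbb{C}$-pieces being $x$-independent. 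When $\mathbb{B}=\emptyset$, a periodic solution of \eqref{LTD_auxi main} is necessarily $x$-independent (it becomes constant after the first interval), so the problem collapses to a scalar periodic ODE whose solvability forces
\[
\lambda_\infty=\frac1T\Big[\sum_{i\in\mathbb{A}}\int_{t_i}^{t_{i+1}}V(0,s)\,\mathrm{d}s+\sum_{i\in\mathbb{C}}\int_{t_i}^{t_{i+1}}V(1,s)\,\mathrm{d}s\Big],
\]
with a one-parameter family of positive solutions, so existence and uniqueness are immediate.

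For the uniqueness assertion when $\mathbb{B}\neq\emptyset$: if $\psi_1$ solves \eqref{LTD_auxi main} with eigenvalue $\lambda$ and satisfies \eqref{principaleigenfunction}, then $u_1:=e^{-\lambda t}\psi_1$ solves the $\lambda=0$ problem, $u_1(\cdot,t^\ast)>0$, and $Q\big(u_1(\cdot,t^\ast)\big)=e^{-\lambda T}u_1(\cdot,t^\ast)$ by periodicity; since the only eigenvalue of $Q$ with a positive eigenfunction is the simple eigenvalue $r$, we get $e^{-\lambda T}=r$, hence $\lambda=\lambda_\infty$, and $u_1(\cdot,t^\ast)=c\phi_0$, hence $u_1=cu$ by forward uniqueness and $\psi_1=c\psi$. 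I expect the Krein--Rutman step to be the main obstacle: because the reset conditions collapse the state onto constants, the period map based at $t=0$ need not be irreducible --- it may have a large kernel, and one cannot apply the strong Krein--Rutman theorem directly --- so the device of re-basing the period map inside a $\mathbb{B}$-interval (together with the separate elementary treatment of $\mathbb{B}=\emptyset$) is precisely what restores strong positivity and delivers simplicity and uniqueness.
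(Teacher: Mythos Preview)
Your approach is essentially the same as the paper's: both reduce \eqref{LTD_auxi main} to a Krein--Rutman analysis of the period-$T$ evolution map for the $\lambda=0$ problem, identify $\lambda_\infty=-\tfrac1T\ln r$ with $r$ the spectral radius, and obtain simplicity and uniqueness from the strong form of Krein--Rutman. The paper works in $X=\{w\in C^1([0,1]):w'(0)=w'(1)=0\}$ and simply asserts that the period map is linear, compact and strongly positive, referring to \cite{DP2012} for details; you work in $C([0,1])$ and supply those details explicitly.

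Your treatment is actually more careful on the one genuinely delicate point. As you correctly observe, if the period map is based at $t=0$ and the first interval lies in $\mathbb{A}$ (resp.\ $\mathbb{C}$), then a nonnegative, nontrivial initial datum with $u_0(0)=0$ (resp.\ $u_0(1)=0$) is sent to zero by the first reset, so strong positivity fails at that base point. Your device of re-basing the period map at an interior time $t^\ast$ of a $\mathbb{B}$-interval (so that the first factor $S_1$ is a genuine parabolic smoothing step making the state strictly positive before any reset occurs), together with the separate elementary treatment of $\mathbb{B}=\emptyset$, is exactly what is needed to justify the strong Krein--Rutman conclusion that the paper states without proof. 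Both routes yield the same result; yours makes the mechanism transparent.
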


\begin{proof}
Our proof essentially adapts the ideas developed in \cite[Theorem 3.4]{DP2012}, which deals with an eigenvalue problem over a varying cylinder.

For any given $u_0(x,0)\in X:=\{w\in C^1([0,1]): w'(0)=w'(1)=0\}$,
we define $u_i(x,t)$ ($1\leq i\leq N$) recursively as the unique solution of the problem
\begin{equation}\label{u_i}
\tag{A.2}
\begin{cases}
\left.\begin{array}{ll}
\smallskip
 \partial_{t}u_i+V(0,t)u_i=0\ \ \ \ \ \ &{\text{in}}\,\,(0,1)\times\left(t_i,t_{i+1}\right],  \\
             u_i(x,t_{i}^+)\equiv u_{i-1}(0,t_{i}^-)&{\text{on}}\,\,(0,1),\\
              \end{array} \right\}\ \,i\in \mathbb{A} \\
\left.\begin{array}{ll}
\smallskip
 \partial_{t}u_i-\partial_{xx}u_i+V(x,t)u_i=0 &{\text{in}}\,\,(0,1)\times\left(t_i,t_{i+1}\right],  \\
              u_i(x,t_{i}^+)=u_{i-1}(x,t_{i}^-) &{\text{on}}\,\,(0,1),\\
             \end{array}\right\}\ \,i\in \mathbb{B} \\
\left.\begin{array}{ll}
\smallskip
\partial_{t}u_i+V(1,t)u_i=0\ \ \ \ \ \ &{\text{in}}\,\,(0,1)\times\left(t_i,t_{i+1}\right],  \\
             u_i(x,t_{i}^+)\equiv u_{i-1}(1,t_{i}^-)&{\text{on}}\,\,(0,1),\\
              \end{array}\right\}\ \,i\in \mathbb{C} \\
\,\,\,\partial_{x}u_i(0,t)=\partial_{x}u_i(1,t)=0\ \ {\text{in}}\,\left[0,T\right],
\end{cases}
\end{equation}
where  $0=t_1<t_2<\ldots<t_{N+1}=T$ and the sets $\mathbb{A},\mathbb{B},\mathbb{C}$ are given in Theorem \ref{LTD_thm main}. By the standard $L^p$-theory of parabolic equations \cite{F1964}, we know that
$$u_i\in C^\sigma\left((t_i,t_{i+1}], W^{2,p}(0,1)\right)\cap C^{1+\sigma}\left((t_i,t_{i+1}], L^{p}(0,1)\right)\,\, \text{ for some }\sigma>0.$$
We may choose $p$ large enough such that $W^{2,p}(0,1)$ is embedded into $C^1([0,1])$.
Thus, $u_i(\cdot,t)\in X$ for any $t\in[0,T]$ and $1\leq i\leq N$.

Let $u_0$ and $u_i$ ($1\leq i\leq N$) be given above. Define the operator $K:X\rightarrow X$ by
$$Ku_0:=u_N(\cdot,T).$$
We use $P^o$ to denote the interior of $P$, a cone of nonnegative functions in $X$.
Then it can be shown that the operator $K$ is linear, compact and strongly positive.
This fact can be verified by a standard argument with the help of the regularity theory
and the maximum principle for parabolic equations. We omit the details and refer to
  \cite[Theorem 3.4]{DP2012}.

By the above properties for $K$, it follows from
the  Krein-Rutman theorem \cite{KR1950}
that the spectral radius $r(K)$ of $K$ is positive, and it corresponds to
an eigenvector $u^{*}\in P^{o}$. Moreover, if $K \tilde u =\tilde r\tilde u$
for some $\tilde u\in P^o$, then necessarily $\tilde r=r(K)$ and $\tilde u=cu^{*}$ for some constant $c>0$.

Let $U:[0,1]\times[0,T]$ be given by
$$U(x,0)=u^{*}(x) \text{ in } [0,1] \quad \text{and}\quad U(x,t)=u_i(x,t)
\text{  in  } [0,1]\times(t_i,t_{i+1}] \quad \text{ for }1\leq i\leq N,$$
where $u_i$ is defined by \eqref{u_i} with $u_0(x)=u^{*}(x)$. By definition, we have
$$U(\cdot,T)=Ku^*=r(K)u^*\,\text{ in }[0,1].$$
Set $\psi^*(x,t):=e^{\lambda_\infty t} U(x,t)$ with $\lambda_\infty=-\frac{1}{T}\ln r(K)$.
Clearly, such a function $\psi^*$ satisfies \eqref{principaleigenfunction} and $\psi^*(x,0)=\psi^*(x,T)$.  Direct calculations give that $\psi^*$ verifies \eqref{LTD_auxi main} with $\lambda=\lambda_\infty$, which proves the existence of the principal eigenvalue.

 Conversely, if \eqref{LTD_auxi main} has a solution $\psi_1$ satisfying \eqref{principaleigenfunction}, then let
 $\tilde r:=e^{-\lambda T}$ and $\tilde u(x,t):=e^{-\lambda t}\psi_1(x,t).$
 It can be verified that $\tilde u(x,t)$ satisfies \eqref{u_i} with $u_0=\psi_1(\cdot,0)$, and furthermore
 $$K\psi_1(\cdot,0)=\tilde u(\cdot,T)=e^{-\lambda T}\psi_1(\cdot,T)=\tilde r\psi_1(\cdot,0).$$
In view of $\psi_1(\cdot,0)\in P^o$, the Krein-Rutman theorem \cite{KR1950} implies that $\tilde r=r(K)$ and $\tilde u=cu^{*}$ for some constant $c>0$, whence
$\lambda=\lambda_\infty $ and $\psi_1=c\psi$.
The proof is thus complete.
\end{proof}

 \section{Proof of (\ref{ldnaux})}\label{appenC}

\begin{proof}[Proof of {\rm\eqref{ldnaux}}]
Recall from \eqref{definition} that $\lambda^{\mathcal{N}\mathcal{N}}\big((\kappa_0-\delta,\kappa+\delta)\big)$ defines the principle eigenvalue of 
\begin{equation*}
\begin{cases}
\mathcal{L}_0\psi:=\partial_t\psi-\partial_{xx}\psi+V\psi=\lambda\psi, \ \
&x\in (\kappa(t)-\delta,\kappa(t)+\delta), \, t\in (0,T),\\
\partial_x\psi(\kappa-\delta,t)=\partial_x\psi(\kappa+\delta,t)=0, &  t\in \left[0,T\right],\\
\psi(x,0)=\psi(x,T), & x\in (\kappa(0)-\delta,\kappa(0)+\delta).
\end{cases}
\end{equation*}
For any given $\epsilon>0$,
we fix  some $\delta_*=\delta_*(\epsilon)>0$ small such  that
 $|V(x,t)-V\left(\kappa,t\right)|<\epsilon$ for all $x\in\left[\kappa(t)-\delta_*,\kappa(t)+\delta_*\right]$ and $ t\in[0,T]$.
Define
 $$
 \overline{\varphi}(t):=\exp\left[{-\int_0^t V\left(\kappa(s),s\right)\mathrm{d}s+{t\over T}\int_0^T
 V\left(\kappa(s), s\right)\mathrm{d}s}\right].
 $$
 For any $0<\delta<\delta_*$ and $x\in\left(\kappa(t)-\delta,\kappa(t)+\delta\right)$, $t\in[0,T]$, direct calculation gives
 $$\mathcal{L}_0\overline{\varphi}\leq \partial_t\overline{\varphi}-\partial_{xx}\overline{\varphi}+(V\left(\kappa(t),t\right)+\epsilon)\overline{\varphi}=\left[\frac{1}{T}\int_0^TV\left(\kappa(s),s\right)\mathrm{d}s+\epsilon\right]\overline{\varphi}$$
 and
 $$\mathcal{L}_0\overline{\varphi}\geq \partial_t\overline{\varphi}-\partial_{xx}\overline{\varphi}+(V\left(\kappa(t),t\right)-\epsilon)\overline{\varphi}=\left[\frac{1}{T}\int_0^TV\left(\kappa(s),s\right)\mathrm{d}s-\epsilon\right]\overline{\varphi}.$$
Since $\overline{\varphi}$ is $T$-periodic,  we can apply Proposition \ref{appendixprop} to deduce that for any $0<\delta<\delta_*$,
 \begin{equation*}
    \int_0^T V\left(\kappa(s),s\right)\mathrm{d}s-\epsilon\leq \lambda^{\mathcal{N}\mathcal{N}}\big((\kappa-\delta,\kappa+\delta)\big)\leq\frac{1}{T}\int_0^T V\left(\kappa(s),s\right)\mathrm{d}s+\epsilon.
 \end{equation*}
 Letting $\delta\searrow0$ and then $\epsilon\searrow0$  give \eqref{ldnaux}.
 \end{proof}

\bigskip
\noindent{\bf Acknowledgments.} {\small
 We sincerely thank the referees for their valuable suggestions 
which help improve the manuscript.
SL was partially supported by the NSF of China (grant Nos. 1207011419
and 11571364). 
YL  was partially supported by the NSF (grant No. DMS-1853561). RP was partially supported by NSF of China (grant No. 11671175). MZ was partially supported by the Nankai Zhide Foundation
and NSF of China (No. 11971498).}

\bigskip


\begin{thebibliography}{00}

\bibitem{ALL2017} I. Averill, K.-Y. Lam, Y. Lou, The role of advection in a two-species competition model: a bifurcation approach,
Mem. Amer. Math. Soc. 245 (2017): 1161.

 \bibitem{BC1995} F. Belgacem,  C. Cosner, The effects of dispersal along environmental gradients on the dynamics of populations in heterogeneous environment, Canadian Appl. Math. Quarterly 3 (1995) 379-397.

\bibitem{CC2003}
R.S. Cantrell,  C. Cosner, 
Spatial Ecology via Reaction-Diffusion Equations. Series in
Mathematical and Computational Biology, John Wiley and Sons, Chichester, UK, 2003.


\bibitem{CC2018} R.S. Cantrell,  C. Cosner,  Evolutionary stability of ideal free dispersal under spatial heterogeneity and time periodicity, Math. Biosci. 305 (2018) 71-76.

\bibitem{CCL2019} R.S. Cantrell,  C. Cosner, K.-Y. Lam,  Ideal free dispersal under general spatial heterogeneity and
time periodicity,  SIAM J. Appl. Math.  81 (2021) 789-813.

\bibitem{CCL2006} R.S. Cantrell, C. Cosner, Y. Lou, Movement towards better environments and the evolution of rapid diffusion, Math.
Biosci. 240 (2006) 199-214.

\bibitem{CCL2007} R.S. Cantrell, C. Cosner, Y. Lou, Advection-mediated coexistence of competing species, Proc. Roy. Soc. Edinburgh
Sect. A 137 (2007) 497-518.

\bibitem{CDP2017} F. Caubet, T. Deheuvels, Y. Privat, Optimal location of resources for biased movement of species: the 1D case, SIAM J.  Appl.  Math. 77 (2017) 1876-1903.

\bibitem{CL2008} X.F. Chen, Y. Lou,  Principal eigenvalue and eigenfunctions of an elliptic operator with large advection and its application to a competition model, Indiana Univ. Math. J. 57 (2008) 627-658.

    \bibitem{CL2012}  X.F. Chen, Y. Lou,  Effects of diffusion and advection on the smallest eigenvalue of an elliptic operators and their applications, Indiana Univ. Math J. 60 (2012) 45-80.

\bibitem{C2014} C. Cosner, Reaction-diffusion-advection models for the effects and evolution of dispersal,
Discrete Contin. Dyn. Syst. 34 (2014) 1701-1745.



\bibitem{DT2016} D. Daners, C. Thornett, Periodic-parabolic eigenvalue problems with a large parameter and degeneration, J. Differential  Equations 261 (2016) 273-295.


\bibitem{DH2008_1} Y. Du, S.-B. Hsu, Concentration phenomena in a nonlocal quasi-linear problem modeling phytoplankton I: existence,
SIAM J. Math. Anal. 40 (2008) 1419-1440.

\bibitem{DH2008_2}  Y. Du, S.-B. Hsu, Concentration phenomena in a nonlocal quasi-linear problem modeling phytoplankton II: limiting
profile, SIAM J. Math. Anal. 40 (2008) 1441-1470.

\bibitem{DP2012} Y. Du, R. Peng,  The periodic logistic equation with spatial and temporal degeneracies, Trans. Amer. Math.
Soc. 364 (2012) 6039-6070.


\bibitem{F1964} A. Friedman, Partial Differential Equations of Parabolic Type, Prentice-Hall, Englewood Cliffs, N.J., 1964.

\bibitem{HNR2011}  F. Hamel, N. Nadirashvili,  E. Russ, Rearrangement inequalities and applications to isoperimetric problems for eigenvalues, Ann. Math. 174 (2011) 647-755.


\bibitem{Hess}
P. Hess,  Periodic-parabolic Boundary Value Problems and
Positivity, Pitman Res., Notes in Mathematics  247, Longman
Sci. Tech., Harlow, 1991.


\bibitem{H1988} M. W. Hirsch,  Stability and convergence in strongly monotone dynamical systems. J. Reine
Angew. Math. 383 (1988), 1-53.

\bibitem{HOW1999} J. Huisman, P. van Oostveen, F.J. Weissing, Species dynamics in phytoplankton blooms: incomplete mixing and
competition for light, Amer. Naturalist 154 (1999) 46-67.

\bibitem{Hutson2000} V. Hutson, W. Shen,  G.T. Vickers, Estimates for the principal spectrum point for certain time-dependent
parabolic operators, Proc. Amer. Math. Soc. 129 (2000) 1669-1679.



\bibitem{Hutson2001} V. Hutson, K. Michaikow, P. Pol\'{a}\v{c}ik, The evolution of dispersal rates in a heterogeneous time-periodic
environment, J. Math. Biol. 43 (2001) 501-533.


\bibitem{KR1950} M. G. Krein, M. A. Rutman, Linear Operators Leaving Invariant a Cone in a Banach Space,
American Mathematical Society, New York, 1950.

\bibitem{LLL2020} K.-Y. Lam, S. Liu, Y. Lou, Selected topics on reaction-diffusion-advection models from spatial ecology, Math. Appl. Sci. Eng. 1 (2020) 150-180.

\bibitem{LL2019_1} K.-Y. Lam, Y. Lou, Persistence, Competition and Evolution, book chapter, The Dynamics of Biological Systems, A. Bianchi, T. Hillen, M. Lewis, Y. Yi eds., Springer Verlag. 2019.

 \bibitem{LL2014_1} K.-Y. Lam, Y. Lou, Evolution of dispersal: ESS in spatial models, J. Math. Biol. 68 (2014) 851-877.

\bibitem{LL2014_2}   K.-Y. Lam, Y. Lou, Evolutionarily stable and convergent stable strategies in reaction-diffusion models for conditional
dispersal, Bull. Math. Biol. 76 (2014) 261-291.



\bibitem{LL2019} S. Liu, Y. Lou, A functional approach towards eigenvalue problems associated with incompressible flow, Discrete Cont. Dynam. Syst. 40 (2020) 3715-3736.

 \bibitem{LL2020} S. Liu, Y. Lou,  Classifying the level set of principal eigenvalue for time-periodic parabolic operators
      and applications, Submitted, (2021).

\bibitem{LLPZ2019} S. Liu, Y. Lou, R. Peng, M. Zhou, Monotonicity of the principal eigenvalue for a linear time-periodic parabolic operator, Proc. Amer. Math. Soc.  47 (2019) 5291-5302.

\bibitem{LLPZ20202} S. Liu, Y. Lou, R. Peng,  M. Zhou, Asymptotics of the principal eigenvalue for a linear time-periodic parabolic operator II: Small diffusion,  Trans. Amer. Math. Soc. 374 (2021) 4895-4930.

\bibitem{LL2014_a}  Y. Lou, F. Lutscher, Evolution of dispersal in advective environments, J. Math Biol. 69 (2014) 1319-1342.

\bibitem{LP2015} Y. Lou, P. Zhou, Evolution of dispersal in advective homogeneous environments: The effect of boundary conditions,
J. Differential Equations 259 (2015) 141-171.

\bibitem{LPZ2019} Y. Lou, X.-Q. Zhao, P. Zhou, Global dynamics of a Lotka-Volterre competition-diffusion-advection system in heterogeneous environments, J. Math. Pure. Appl. 121 (2019) 47-82.


\bibitem{MO2017} M. Ma, C. Ou, Existence, uniqueness, stability and bifurcation of periodic patterns for a seasonal single phytoplankton model with self-shading effect, J. Differential Equations 263 (2017) 5630-5655.

\bibitem{MNP2019} I. Mazari, G. Nadin, Y. Privat. Shape optimization of a weighted two-phase Dirichlet eigenvalue, ArXiv preprint, 2020. https://arxiv.org/abs/2001.02958.

\bibitem{Nadin2009} G. Nadin,  The principal eigenvalue of a space-time periodic parabolic operator, Ann. Math. Pur. Appl. 188 (2009) 269-295.

\bibitem{Nadin2011} G. Nadin,  Some dependence results between the spreading
speed and the coefficients of the space-time periodic Fisher-KPP equation,  Eur. J.  Appl. Math. 22 (2011) 169-185.


\bibitem{NY2018} K. Nagahara, E. Yanagida, Maximization of the total population in a reaction-diffusion model with logistic growth,
Calc. Var. Partial Differential Equations 57 (2018): 80.

\bibitem{Ni2011} W.-M. Ni,  The Mathematics of Diffusion, CBMS-NSF Regional Conf. Ser. in Appl. Math. 82, SIAM,
Philadelphia, 2011.

\bibitem{PLNL2005} E. Pachepsky, F. Lutscher, R. Nisbet, M.A. Lewis,  Persistence, spread and the drift paradox, Theor.  Popul. Biol.  67  (2005)  61-73.

\bibitem{PZZ2018} R. Peng,  G. Zhang, M. Zhou,  Asymptotic behavior of the principal eigenvalue of a linear second order  elliptic operator with small/large diffusion coefficient,
 SIAM J. Math. Anal. 51 (2019) 4724-4753.


\bibitem{PZ2015} R. Peng,  X.-Q. Zhao, Effects of diffusion and advection on the principal eigenvalue of a periodic-parabolic problem with applications, Calc. Var. Partial Diff. 54 (2015) 1611-1642.

\bibitem{PZ2016} R. Peng, X.-Q. Zhao, A nonlocal and periodic reaction-diffusion-advection model of a single phytoplankton species, J. Math. Biol. 72 (2016) 755-791.

\bibitem{PZ2017} R. Peng, M. Zhou, Effects of large degenerate advection and boundary conditions on the principal eigenvalue and its eigenfunction of a linear second order elliptic operator, Indiana Univ. Math J.  67 (2018) 2523-2568.



\bibitem{PW1984} M.H. Protter, H.F.  Weinberger,  Maximum Principles in Differential Equations, 2nd ed., Springer-Verlag, Berlin, 1984.

 \bibitem{S1981} N. Shigesada, A. Okubo, Analysis of the self-shading effect on algal vertical distribution in natural waters, J. Math.
Biol. 12 (1981) 311-326.

\bibitem{SG2001} D.C. Speirs, W.S.C. Gurney, Population persistence in rivers and estuaries, Ecology 82 (2001) 1219-1237.


\bibitem{VL2011} O. Vasilyeva, F. Lutscher, Population dynamics in rivers: analysis of steady states, Can. Appl. Math. Quart. 18
(2011) 439-469.
\bibitem{W1989} W. Walter,  A theorem on elliptic differential inequalities and applications to gradient bounds, Math. Z.
200 (1989) 293-299.


\bibitem{ZZ2016} X.-Q. Zhao, P. Zhou, On a Lotka-Volterra competition model: the effects of advection and spatial variation, Calc.
Var. Partial Differential Equations 55 (2016): 73.
\end{thebibliography}
\end{document}